\documentclass[12pt,notitlepage,a4paper]{article}

\usepackage{amsmath,amsfonts,amsthm,bbm}
\usepackage[arrow,matrix]{xy}
\usepackage{enumerate}

\theoremstyle{plain}
\numberwithin{equation}{section}
\newtheorem{thm}{Theorem}[section]
\newtheorem{cor}[thm]{Corollary}
\newtheorem{lem}[thm]{Lemma}
\newtheorem{prop}[thm]{Proposition}

\theoremstyle{definition}
\newtheorem{df}[thm]{Definition}
\newtheorem{ex}[thm]{Example}
\newtheorem{rmk}[thm]{Remark}

\renewcommand{\proofname}{\bf Proof}

\newcommand{\mb}{\mathbb}
\newcommand{\mf}{\mathfrak}
\newcommand{\ml}{\mathcal}

\newcommand{\Q}{\mb{Q}}
\newcommand{\Z}{\mb{Z}}

\newcommand{\ch}{{\rm CH}}
\newcommand{\h}{H^S}
\newcommand{\DM}{\boldsymbol{\rm DM}^{\rm eff}_{\rm -, Nis}}
\newcommand{\HomD}{{\rm Hom}_{\DM}}
\newcommand{\sym}{{\rm Sym}}
\renewcommand{\ker}{{\rm Ker}}
\newcommand{\im}{{\rm Im}}
\newcommand{\tensor}{\otimes}
\newcommand{\Cor}{\mathrm{Cor}}
\newcommand{\M}{{\rm M}}

\newcommand{\PST}{\boldsymbol{\rm PST}}
\newcommand{\ShT}{\boldsymbol{\rm ShT}_{\rm Nis}}

\newcommand{\qfh}{{\rm \bf Sh}_{\rm qfh}(k,\Q)}
\newcommand{\nis}{\underline{\rm \bf ShT}_{\rm Nis}(k,\Q)}

\begin{document}
\author{Rin Sugiyama}
\title{Motivic homology of semiabelian varieties}

\maketitle

\begin{abstract}
We generalize some classical results on Chow group of an abelian variety to semiabelian varieties and to motivic (co)homology, using a result of Ancona--Enright-Ward--Huber \cite{AEH} on a decomposition of the motive of a semiabelian variety in the Voevodsky's category.
\end{abstract}

\noindent{Mathematics Subject Classification(2010)}: 19E15,14C25

\noindent{Keywords}: semiabelian variety, motivic homology, algebraic cycles

\section{Introduction}
Let $k$ be a field.
Let $G$ be a semiabelian variety over $k$.
By the definition of semiabelian varieties, $G$ is an extension of an abelian variety $A$ of dimension $g$ by a torus $T$ of rank $r$, i.e. there is the following exact sequence of smooth group schemes over $k${\rm :}
\begin{align}
1\to T \to G \to A \to 1.
\end{align}
We say that $G$ is of rank $r$ if the rank of $T$ is $r$.
A semiabelian variety of rank zero means an abelian variety.

In this paper, we study of structure of motivic homology $H_p(G,\Q(q))$ (cf. Notation) and Bloch's higher Chow group $\ch^q(G,s;\Q)$ (\cite{B2}) (motivic cohomology, see Theorem \ref{voe}).
For an abelian variety $A$, Beauville \cite{Be1} proved the following theorem (cf. \cite{DM,K}):

\begin{thm}[Beauville \cite{Be1}]\label{tbe}
Let $A$ be an abelian variety of dimension $g$ over a field $k$.
Let $q$ be an integer with $0\leq q\leq g$.
For an integer $i$, let $\ch^q(G;\Q)^{(i)}$ denote the subspace of the Chow group $\ch^q(A;\Q):$
\[
\ch^q(A;\Q)^{(i)}:=\{\alpha \in \ch^q(A;\Q) \ |\ n_A^*\alpha=n^i\alpha \ \text{for all}\ n\}.
\]
Here $n_A^*$ is the pull-back along the map $A \stackrel{\times n}{\longrightarrow}A$ of multiplication by an integer $n$.
Then $\ch^q(A;\Q)^{(i)}$ is zero for $i \not \in [q,g+q]$ and the Chow group $\ch^q(A;\Q)$ is decomposed as follows$:$ for $0\leq q \leq g$,
\[
\ch ^q(A;\Q)= \bigoplus_{i=q}^{g+q}\ch^q(A;\Q)^{(i)}.
\]
\end{thm}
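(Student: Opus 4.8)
The plan is to deduce Theorem \ref{tbe} from two classical facts about abelian varieties: the theorem of the cube, which forces the map $n\mapsto n_A^*\alpha$ to be polynomial in $n$ and hence produces the eigenspace decomposition (with an a priori range), and the Fourier--Mukai transform on Chow groups, which pins down the precise range $[q,g+q]$. We may assume $q\ge 1$, the case $q=0$ being trivial ($\ch^0(A;\Q)=\Q$ with $n_A^*=\mathrm{id}$). For $\alpha\in\ch^q(A;\Q)$ set $\delta_k(\alpha)=\sum_{\emptyset\ne I\subseteq\{1,\dots,k\}}(-1)^{k-|I|}\sigma_I^*\alpha\in\ch^q(A^k;\Q)$, where $\sigma_I\colon A^k\to A$ sums the coordinates indexed by $I$. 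Since $n_A=\sigma_{\{1,\dots,n\}}\circ\Delta_n$, with $\Delta_n\colon A\to A^n$ the diagonal, Möbius inversion gives
\[
n_A^*\alpha=\sum_{k=1}^{n}\binom{n}{k}\,\Delta_k^*\delta_k(\alpha).
\]
By the theorem of the cube, $\delta_k(\alpha)=0$ for $k>2g$ (this is Bloch's theorem; for $q=1$ it is the classical identity $n_A^*L=\tfrac{n^2+n}{2}L+\tfrac{n^2-n}{2}(-1_A)^*L$ in $\mathrm{Pic}(A)$). Hence the sum is finite, and $n\mapsto n_A^*\alpha$ agrees with a polynomial $P_\alpha(n)=\sum_{i=1}^{2g}\alpha_i n^i$ with coefficients $\alpha_i\in\ch^q(A;\Q)$.

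To extract the decomposition I would use the semigroup law $m_A^*\circ n_A^*=(mn)_A^*$. Applying the additive map $m_A^*$ to $P_\alpha(n)$ and comparing the coefficients of $n^i$ with those of $P_\alpha(mn)=\sum_i\alpha_i(mn)^i$ yields $m_A^*\alpha_i=m^i\alpha_i$ for all $m\in\Z$, i.e. $\alpha_i\in\ch^q(A;\Q)^{(i)}$; evaluating $P_\alpha$ at $n=1$ gives $\alpha=\sum_i\alpha_i$, so these subspaces span $\ch^q(A;\Q)$. They are in direct sum: if $\sum_i\beta_i=0$ with $\beta_i\in\ch^q(A;\Q)^{(i)}$ then $\sum_i n^i\beta_i=n_A^*\big(\sum_i\beta_i\big)=0$ for every $n\in\Z$, so by a Vandermonde argument each $\beta_i=0$. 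Thus $\ch^q(A;\Q)=\bigoplus_{i=1}^{2g}\ch^q(A;\Q)^{(i)}$.

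It remains to sharpen the range from $[1,2g]$ to $[q,g+q]$. For this I would bring in the Fourier transform $\mathcal{F}_A\colon\ch^*(A;\Q)\to\ch^*(\hat{A};\Q)$ ($\hat{A}$ the dual abelian variety), $\mathcal{F}_A(x)=p_{\hat{A}*}\big(p_A^*x\cdot\mathrm{ch}(\mathcal{P})\big)$, attached to the Poincaré bundle $\mathcal{P}$ on $A\times\hat{A}$. The properties I need are: (i) Mukai's inversion $\mathcal{F}_{\hat{A}}\circ\mathcal{F}_A=(-1)^g(-1_A)^*$, so $\mathcal{F}_A$ is bijective (the triviality of $T_A$ makes the Todd classes drop out of Grothendieck--Riemann--Roch, so $\mathrm{ch}(\mathcal{P})$ alone governs $\mathcal{F}_A$); and (ii) because $\widehat{n_A}=n_{\hat{A}}$ and $(n_A\times\mathrm{id})^*\mathrm{ch}(\mathcal{P})=\exp\!\big(n\,c_1(\mathcal{P})\big)$, the transform exchanges the operators $n_A^*$ and $n_{\hat{A}*}$, and a bookkeeping of codimensions refines this into the assertion that, for each $i$, $\mathcal{F}_A$ restricts to an isomorphism
\[
\mathcal{F}_A\colon\ch^q(A;\Q)^{(i)}\ \xrightarrow{\ \sim\ }\ \ch^{\,g+q-i}(\hat{A};\Q)^{(2g-i)}
\]
(the eigenvalue passing from $n^i$ to $n^{2g-i}$ since $n_{\hat{A}*}\circ n_{\hat{A}}^*=n^{2g}$). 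Since $\ch^m(\hat{A};\Q)=0$ for $m<0$ and for $m>g$, the target is zero whenever $g+q-i\notin[0,g]$, that is, whenever $i\notin[q,g+q]$; this gives the asserted vanishing, and together with the previous step it yields $\ch^q(A;\Q)=\bigoplus_{i=q}^{g+q}\ch^q(A;\Q)^{(i)}$.

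The main obstacle is property (ii) above: the precise compatibility of $\mathcal{F}_A$ with \emph{both} the eigenspace grading and the codimension grading---the bigraded statement that $\mathcal{F}_A$ carries $\ch^q(A;\Q)^{(i)}$ onto a single codimension. The polynomiality step, although classical, is itself non-trivial, depending on Bloch's vanishing $\delta_k=0$ for $k>2g$. As an alternative to the Fourier argument, one could replace steps (i)--(ii) by the Chow--K\"unneth decomposition of $A$ due to Deninger--Murre and K\"unnemann, which records the same eigenspace data at the level of Chow motives.
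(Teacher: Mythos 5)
Your argument is correct in outline, but it is essentially Beauville's original Fourier--Mukai proof, whereas the paper itself never proves Theorem \ref{tbe} directly: it quotes it from \cite{Be1} and then re-derives a generalization by a genuinely different, motivic route. In the paper the eigenspace decomposition comes for free from the Ancona--Enright-Ward--Huber isomorphism $\M(A)\simeq\sym(\M_1(A))$, since $n_A$ acts on $\sym^i(\M_1(A))$ by $n^i$ (Corollary \ref{dec}), and the sharp range $[q,g+q]$ is obtained from Lemma \ref{WBF}, i.e.\ the Weil--Barsotti formula read motivically as $\M_1(\hat A)\simeq\M_1(A)^*(1)[2]$, which gives $\sym^i(\M_1(A))\simeq(\sym^i(\M_1(\hat A)))^*(i)[2i]$ and identifies the $i$-th eigenpart with a higher Chow group of $\hat A$ of codimension $i-q$; the vanishing then follows from the elementary ranges of Theorem \ref{coh}. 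This motivic dualization is the exact analogue of your step (ii), the bigraded statement $\mathcal{F}_A\colon\ch^q(A;\Q)^{(i)}\xrightarrow{\sim}\ch^{g+q-i}(\hat A;\Q)^{(2g-i)}$ (which, in the indexing of \cite{Be1}, is his $\mathcal{F}(\ch^p_s(A))=\ch^{g-p+s}_s(\hat A)$), so the two proofs are morally parallel; what your classical argument buys is a self-contained proof at the level of Chow groups of an abelian variety, while the paper's machinery buys the extension to higher Chow groups, to motivic homology, and to semiabelian varieties, with your Vandermonde/polynomiality step absorbed into the finiteness $\sym^i(\M_1(G))=0$ for $i>2g+r$ (Proposition \ref{vm}). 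One small caveat on your first step: the vanishing $\delta_k(\alpha)=0$ for $k>2g$ in arbitrary codimension is not an elementary consequence of the theorem of the cube (Bloch's 1976 paper concerns zero-cycles), and historically polynomiality of $n\mapsto n_A^*\alpha$ was itself first established via the Fourier formalism; you can streamline by deriving polynomiality directly from the finiteness of $\mathrm{ch}(\mathcal{P})$ and Mukai inversion, rather than invoking a separate cube-type vanishing. Your closing suggestion to use Deninger--Murre/K\"unnemann is indeed the closest classical relative of the paper's approach.
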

Note that the index is different from one of Beauville \cite{Be1} (his notation $\ch_s^p(A)$ corresponds to our $\ch^p(A;\Q)^{(2p-s)}$).

Bloch \cite{B} studied an iterated Pontryagin product $I_A^{*r}$ of the kernel $I_A$ of the degree map $\ch_0(A) \to \Z$ for an abelian variety $A$ over an algebraically closed field, and he proved the following theorem:

%
\begin{thm}[Bloch \text{\cite{B}}]
Let $A$ be an abelian variety of dimension $g$ over an algebraically closed field.
Then $I_A^{*i}=0$ for $i>g$.
\end{thm}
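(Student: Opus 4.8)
The plan is to deduce this from Beauville's decomposition (Theorem~\ref{tbe}), by tracking how the Pontryagin product moves a class across the eigenspace grading. Write $g=\dim A$ and identify $\ch_0(A)=\ch^g(A)$; for a class $\alpha$ with $n_A^*\alpha=n^j\alpha$ for all $n$ call $j$ its \emph{level}. Over $\overline{k}$ the group $I_A$ is generated by the classes $e_a:=[a]-[0]$ with $a\in A(k)$. Tensoring with $\Q$, Theorem~\ref{tbe} with $q=g$ gives $\ch^g(A;\Q)=\bigoplus_{j=g}^{2g}\ch^g(A;\Q)^{(j)}$; since $\deg(n_A^*\beta)=n^{2g}\deg(\beta)$ for any $0$-cycle class $\beta$, the degree map annihilates every summand of level $j<2g$ and identifies the top summand $\ch^g(A;\Q)^{(2g)}$ (one-dimensional, spanned by the class of a point) with $\Q$. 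Hence $\deg(e_a)=0$ forces $e_a\in\bigoplus_{j=g}^{2g-1}\ch^g(A;\Q)^{(j)}$, so that $I_A^{*i}\otimes\Q$ is spanned by the products $e_{a_1}*\cdots*e_{a_i}$, each factor a sum of classes of level in $[g,2g-1]$.

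The crux is the multiplicativity of levels under $*$: if $\alpha$ has level $j$ and $\beta$ has level $j'$, then $\alpha*\beta$ has level $j+j'-2g$. This is a direct computation with the multiplication-by-$n$ map. The summands of Theorem~\ref{tbe} are the $n_{A*}$-eigenspaces, with eigenvalue $n^{2g-m}$ on level $m$ (from $n_{A*}n_A^*=n^{2g}$); writing $\alpha=n_A^*(\alpha/n^j)$ and $\beta=n_A^*(\beta/n^{j'})$, the identity $n_A\circ s=s\circ(n_A\times n_A)$ for the sum map $s\colon A\times A\to A$, together with the projection formula for the degree-$n^{4g}$ isogeny $n_A\times n_A$, yields $n_{A*}(\alpha*\beta)=n^{4g-j-j'}(\alpha*\beta)$ for all $n$; comparing eigenvalues places $\alpha*\beta$ in level $j+j'-2g$. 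Iterating, a product of $i$ classes of levels $j_1,\dots,j_i$ sits in level $\sum_\ell j_\ell-2(i-1)g\le i(2g-1)-2(i-1)g=2g-i$, which is $<g$ once $i>g$; Theorem~\ref{tbe} then kills that summand. Hence $I_A^{*i}\otimes\Q=0$ for $i>g$. The only step with genuine content is this level-multiplicativity lemma — a concrete manipulation of pullback, pushforward and the group law — while the rest is bookkeeping with Theorem~\ref{tbe}.

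For the integral statement one notes that $I_A^{*i}$ is now torsion and that the Albanese map $\ch_0(A)^0\to A(k)$ already kills $I_A^{*2}$ (it sends $[a+b]-[a]-[b]+[0]$ to $0$), so $I_A^{*i}$ lies in the Albanese kernel $T(A)$, which is torsion-free over $\overline{k}$ by Roitman's theorem; a torsion subgroup of it vanishes. Alternatively, invoking the Fourier transform $\mathcal{F}\colon\ch^*(A;\Q)\xrightarrow{\ \sim\ }\ch^*(\hat{A};\Q)$ — an isomorphism exchanging $*$ with the intersection product — one has $\mathcal{F}(e_a)=\mathrm{ch}(\mathcal{P}_a)-1\in\bigoplus_{q\ge1}\ch^q(\hat{A};\Q)$, so $\mathcal{F}$ carries a product of $i$ generators of $I_A$ into $\bigoplus_{q\ge i}\ch^q(\hat{A};\Q)$, which is $0$ as soon as $i>g=\dim\hat{A}$.
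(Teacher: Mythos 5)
Your proposal is correct, but it takes a genuinely different (classical) route from the paper: the paper never reproves Bloch's theorem --- it is quoted from \cite{B} --- and instead derives the semiabelian generalization (Corollary \ref{shom}) from the motivic isomorphism $\M(G)\simeq\sym(\M_1(G))$, which packages exactly the two facts you establish by hand, namely a weight decomposition of $H_0$ compatible with the Pontryagin product (Corollary \ref{dec}) together with a bound on the range of weights (Theorem \ref{vmh}), followed by the same integral upgrade (the albanese map kills $I^{*2}$, and Rojtman-type results, Theorem \ref{rt}, make torsion classes in the albanese kernel vanish). Your argument replaces the motivic input by Beauville's Chow-theoretic decomposition plus the eigenvalue computation $\ch^g(A;\Q)^{(j)}*\ch^g(A;\Q)^{(j')}\subset \ch^g(A;\Q)^{(j+j'-2g)}$, which is precisely the abelian-variety specialization of the paper's weight additivity (via the dictionary of Remark \ref{rmkab} between the pullback and pushforward gradings), and your Fourier-transform variant is the other standard classical proof of the rational statement. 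Two points deserve flagging. First, your claim that $\ch^g(A;\Q)^{(2g)}$ is one-dimensional, so that $I_A\otimes\Q$ has no level-$2g$ component, is true but is not contained in Theorem \ref{tbe} as quoted; it is Beauville's computation $\ch^g(A;\Q)^{(2g)}\simeq\Q$ (equivalently it follows from the Fourier isomorphism you invoke, which carries this piece to $\ch^0(\hat{A};\Q)$), and it is genuinely needed, since a degree-zero class of level $2g$ would not lower the level under $*$; in the paper's formulation the corresponding fact is automatic because the weight-zero piece is $\HomD(\Q,\sym^0\M_1(A))=\Q$. Second, for torsion-freeness of the albanese kernel in characteristic $p$ you need Milne's $p$-primary complement to Rojtman's theorem (this is why the paper's semiabelian statement carries a $p$-torsion caveat, which disappears for abelian varieties). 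What your route buys is a self-contained proof within classical intersection theory on abelian varieties; what the paper's route buys is the uniform extension to semiabelian varieties and to all of motivic homology.
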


\medskip
In this paper, we generalize the above theorems to semiabelian varieties.

Let $G$ be a semiabelian variety over $k$.
For an integer $n$, let $n_G$ denote the map $G \stackrel{\times n}{\longrightarrow} G$ of multiplication by $n$.
 For an integer $i$, we define the subgroup $\ch^q(G,s;\Q)^{(i)}$ of the higher Chow group $\ch^q(G,s;\Q)$ as follows (cf. Notation):
\[
\ch^q(G,s;\Q)^{(i)}:=\{\alpha \in \ch^q(G,s;\Q) \ |\ n_G^*\alpha=n^i\alpha \ \text{for all}\ n\}.
\]
Here $n_G^*$ is the pull-back along $n_G$.
Similarly, for an integer $i$, we define the subgroup $H_p(G,\Q(q))_{(i)}$ of the motivic homology $H_p(G,\Q(q))$ as follows:
\[
H_p(G,\Q(q))_{(i)}:=\{\alpha \in H_p(G,\Q(q)) \ |\ {n_G}_*\alpha=n^i\alpha \ \text{for all}\ n\}.
\]
Here ${n_G}_*$ denotes the push-forward map along $n_G$. 
If $G$ is an abelian variety $A$, then $H_{2q}(A,\Q(q))$ is isomorphic to the Chow group $\ch_q(A;\Q)$ of algebraic cycles of dimension $q$ on $A$ modulo rational equivalence with $\Q$-coefficients.


\begin{thm}[Corollary \ref{decm}]\label{mth}
Let $G$ be a semiabelian variety over a perfect field $k$, which is an extension of an abelian variety of dimension $g$ by a torus of rank $r$.
\begin{enumerate}[{\rm (1)}]
\item
We have the following natural decomposition
\[
\ch^q(G,s;\Q)=\bigoplus_{i=j_0}^{j_1}\ch^q(G,s;\Q)^{(i)}
\]
where $j_0=\max\{ 0,q-s\}$ and $j_1=\min\{2g+r, g+q\}$.

\item
We have the following natural decomposition 
\[
H_p(G,\Q(q))=\bigoplus_{i=i_0}^{i_1}H_p(G,\Q(q))_{(i)}
\]
where $i_0=\max\{0,q\}$ and $i_1=\min\{2g+r,g+r+p-q\}$.
\end{enumerate}
\end{thm}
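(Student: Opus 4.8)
The plan is to derive both decompositions from the motivic decomposition of $M(G)$ established by Ancona--Enright-Ward--Huber \cite{AEH}, together with Beauville's Theorem~\ref{tbe} and an extension of it to the higher Chow groups (and the motivic homology) of abelian varieties. Write $1\to T\to G\to A\to 1$ with $\dim A=g$ and $\mathrm{rk}\,T=r$. By \cite{AEH} there is a decomposition in $\DM$, natural in $G$,
\[
M(G)\;\cong\;\bigoplus_{i=0}^{2g+r}M_i(G),
\]
on which the endomorphism $M(n_G)$ induced by $n_G$ acts as multiplication by $n^i$ on the summand $M_i(G)$. Applying $\HomD(-,\Q(q)[2q-s])$, respectively $\HomD(\Q(q)[p],-)$, and using that $n_G^*$ (respectively ${n_G}_*$) is induced by $M(n_G)$, one obtains decompositions
\[
\ch^q(G,s;\Q)=\bigoplus_{i=0}^{2g+r}\ch^q(G,s;\Q)^{(i)},\qquad H_p(G,\Q(q))=\bigoplus_{i=0}^{2g+r}H_p(G,\Q(q))_{(i)},
\]
where $\ch^q(G,s;\Q)^{(i)}=\HomD(M_i(G),\Q(q)[2q-s])$ and $H_p(G,\Q(q))_{(i)}=\HomD(\Q(q)[p],M_i(G))$. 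It then remains to prove that the summands whose index lies outside $[j_0,j_1]$, respectively $[i_0,i_1]$, vanish.

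For this I will use the finer structure of the summands: each $M_i(G)$ carries a finite filtration whose graded quotients are, up to a finite Artin twist coming from the Galois action on the character lattice of $T$ (trivial when $T$ is split, and reduced to that case over the perfect field $k$ by a finite separable base change together with Galois descent with $\Q$-coefficients), direct summands of $M_a(A)(b)[b]^{\oplus m}$ with $a+b=i$, $0\le a\le 2g$ and $0\le b\le r$; here $M_a(A)$ is the corresponding summand of $M(A)$. The long exact sequences attached to this filtration, together with the cancellation theorem used to absorb the Tate twist $(b)[b]$, exhibit $\ch^q(G,s;\Q)^{(i)}$ as a subquotient of $\bigoplus\ch^{q-b}(A,s-b;\Q)^{(a)}$ and $H_p(G,\Q(q))_{(i)}$ as a subquotient of $\bigoplus H_{p-b}(A,\Q(q-b))_{(a)}$, the sums running over $a+b=i$ with $0\le b\le r$.

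Next I invoke the abelian variety case in the form: $\ch^{q'}(A,s';\Q)^{(a)}=0$ unless $\max\{0,q'-s'\}\le a\le\min\{2g,g+q'\}$, and, dually, $H_{p'}(A,\Q(q'))_{(a)}=0$ unless $\max\{0,q'\}\le a\le\min\{2g,g+p'-q'\}$. For $s'=0$ the first statement is Theorem~\ref{tbe}; the general case is obtained by the same argument applied to the summand $M_a(A)$ of $M(A)$ (which is of pure weight $a$), and the homology version follows from it by the Poincar\'e duality $M_a(A)^\vee\cong M_{2g-a}(A)(-g)[-2g]$, which identifies $H_{p'}(A,\Q(q'))_{(a)}$ with $\ch^{g-q'}(A,p'-2q';\Q)^{(2g-a)}$. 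Substituting these into the subquotient descriptions above, with $q'=q-b$, $s'=s-b$, $p'=p-b$ and $i=a+b$, $0\le b\le r$: for part~(1), nonvanishing of $\ch^q(G,s;\Q)^{(i)}$ forces, for some admissible $a,b$, both $\max\{0,q-s\}=\max\{0,(q-b)-(s-b)\}\le a$ and $a\le\min\{2g,g+q-b\}$, whence
\[
j_0=\max\{0,q-s\}\le i\le\min\{2g+b,\,g+q\}\le\min\{2g+r,\,g+q\}=j_1;
\]
for part~(2), nonvanishing of $H_p(G,\Q(q))_{(i)}$ forces $\max\{0,q-b\}\le a\le\min\{2g,\,g+(p-b)-(q-b)\}=\min\{2g,\,g+p-q\}$, whence
\[
i_0=\max\{0,q\}\le\max\{b,q\}\le i\le\min\{2g,\,g+p-q\}+b\le\min\{2g+r,\,g+r+p-q\}=i_1.
\]
Hence all summands with index outside the asserted intervals vanish, the decompositions of the first paragraph collapse to the ones claimed, and their naturality is inherited from that of the decomposition of $M(G)$.

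I expect the third step to be the main obstacle: one must establish the higher Chow (and motivic homology) extension of Beauville's theorem for abelian varieties in exactly the form used, and keep precise control of the weight and twist shifts contributed by the torus while propagating them through the filtration on $M_i(G)$ --- in particular the cases of small or negative $q$, where a Tate twist $(b)[b]$ with $b>q$ must be handled in $\DM$ (via the cancellation theorem and the vanishing of motivic cohomology in negative weight) rather than inside the effective category. The other steps are formal manipulations of the Ancona--Enright-Ward--Huber decomposition.
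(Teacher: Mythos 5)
Your architecture is essentially the paper's, rearranged: the graded decomposition on which $n_G$ acts by $n^i$ on the $i$-th summand is Corollary \ref{dec} (with $M_i(G)=\sym^i(\M_1(G))$); your filtration of $M_i(G)$ with graded pieces that are twists of $\sym^a(\M_1(A))(b)[b]$, $a+b=i$, $0\le b\le r$, is the iterated form of the paper's key triangle (Lemma \ref{keyt}), which the paper does \emph{not} get for free --- it rests on Lemma \ref{skey} and the exactness of the tensor product (Proposition \ref{flat}), so you should either prove it or cite it precisely; your handling of a non-split torus by finite separable base change and $\Q$-linear descent parallels the paper's passage to $\bar k$ via continuity; and your index bookkeeping leading to $[j_0,j_1]$ and $[i_0,i_1]$ is correct.

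The genuine gap is exactly the step you flag as ``the main obstacle'' and then assert: the Beauville-type bounds for \emph{higher} Chow groups and motivic homology of an abelian variety, i.e.\ $\ch^{q'}(A,s';\Q)^{(a)}=0$ for $a\notin[\max\{0,q'-s'\},\min\{2g,g+q'\}]$ and its homology counterpart. Saying this follows ``by the same argument applied to the summand $M_a(A)$'' is not a proof, and the tool you do name does not suffice: Poincar\'e duality $M_a(A)^\vee\simeq M_{2g-a}(A)(-g)[-2g]$ only exchanges homology in grade $(a)$ with cohomology in grade $(2g-a)$ on the same variety (this is Remark \ref{rmkab}); it converts one unknown bound into the other, and combined with the elementary vanishing of the ungraded higher Chow groups it yields no $a$-dependent restriction. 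The missing ingredient is the Fourier/Weil--Barsotti duality in motivic form, $\M_1(\hat A)\simeq(\M_1(A))^*(1)[2]$, hence $\sym^a(\M_1(A))\simeq\bigl(\sym^a(\M_1(\hat A))\bigr)^*(a)[2a]$ --- note that this duality \emph{preserves} the grade $a$, unlike Poincar\'e duality --- which identifies $H_p(A,\Q(q))_{(a)}$ with $\ch^{a-q}(\hat A,p-2q;\Q)^{(a)}$ and reduces the desired bounds to the elementary vanishing of higher Chow groups; this is the paper's Lemma \ref{WBF} and the proof of Theorem \ref{vmh}\,(2). Until you supply this (or an equivalent correspondence-level Fourier argument acting on higher Chow groups), the passage from Theorem \ref{tbe} (the case $s=0$) to all $s$ is unjustified and the proof does not close.
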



\begin{rmk}
(a)
For an abelian variety, Theorem \ref{mth}\,(1) and (2) are the same.
Theorem \ref{mth}\,(1) for $s=0$ is Beauville's result (Theorem \ref{tbe}).

(b)
Let notation be as in Theorem \ref{mth}.
Let $q\in [0,g]$ be an integer.
Beauville \cite{Be1} has conjectured that for $i>2q$,
\[
\ch^q(A;\Q)^{(i)}=0.
\]
This conjecture implies that
$
H_{2q-r}(G,\Q(q))_{(i)}=0
$
for $i<2q-r$
and that
$
\ch^q(G;\Q)^{(i)}=0
$
for $i>2q$.
\end{rmk}

\medskip
We denote the $0$-th Suslin homology of $G$ with $\Z$-coefficient by $\h_0(G,\Z)$.
Using Theorem \ref{mth}\,(2) in case that $p=q=0$ and results on Rojtman's theorem (see Theorem \ref{rt}) by Spiess--Szamuely \cite{SS} and Geisser \cite{Ge1,Ge2}, we obtain Bloch's result for semiabelian varieties and 0-th Suslin homology:
\begin{cor}[Corollary \ref{shom}]\label{mthm}
Let $G$ be a semiabelian variety over $k$, which is an extension of an abelian variety of dimension $g$ by a torus of rank $r$.
Let $I_G$ be the kernel of the degree map ${\rm deg}: \h_0(G,\Z)\to \Z$.
Then an iterated Pontryagin product $I_{G}^{*i}$ is torsion for $i>g+r$.

In particular, if $k$ is an algebraically closed field of characteristic $p\geq 0$, then for $i>g+r$,
\[
I_G^{*i}=0
\] 
up to $p$-torsion part.
Under the resolution of singularities, it holds without the assumption on the characteristic.
\end{cor}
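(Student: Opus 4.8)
The plan is to reduce the statement to the weight-zero case of Theorem~\ref{mth}\,(2) together with the multiplicativity of the Pontryagin product, and then to upgrade ``rationally trivial'' to ``trivial up to $p$-torsion'' by means of Rojtman's theorem. First I would put $p=q=0$ in Theorem~\ref{mth}\,(2). Since rational $0$-th Suslin homology is weight-zero motivic homology, $\h_0(G,\Z)\otimes\Q=H_0(G,\Q(0))$, and in this case $i_0=0$, $i_1=g+r$, so
\[
\h_0(G,\Z)\otimes\Q=\bigoplus_{i=0}^{g+r}H_0(G,\Q(0))_{(i)} .
\]
Here the summand $H_0(G,\Q(0))_{(0)}$ is one-dimensional, corresponding to the unit summand $\Q$ of $\M(G)$ in the Ancona--Enright-Ward--Huber decomposition \cite{AEH}, and it is spanned by the class $e_*1$ of the identity section $e\colon\mathrm{Spec}\,k\to G$. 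Since the structure map $G\to\mathrm{Spec}\,k$ and $e$ are both equivariant for multiplication by $n$ (one has $n_G\circ e=e$, the composite $G\xrightarrow{n_G}G\to\mathrm{Spec}\,k$ is the structure map, and $n_{\mathrm{Spec}\,k}=\mathrm{id}$ acts by $n^{0}$ on $\h_0(\mathrm{Spec}\,k,\Q)=\Q$), the degree map annihilates $H_0(G,\Q(0))_{(i)}$ for $i\neq0$ and restricts to an isomorphism on $H_0(G,\Q(0))_{(0)}$; hence $I_G\otimes\Q=\bigoplus_{i=1}^{g+r}H_0(G,\Q(0))_{(i)}$.

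The structural point I would use next is that the Pontryagin product respects this decomposition: if $\alpha\in H_0(G,\Q(0))_{(i)}$ and $\beta\in H_0(G,\Q(0))_{(j)}$, then $\alpha*\beta\in H_0(G,\Q(0))_{(i+j)}$. This follows at once from $n_G\circ m=m\circ(n_G\times n_G)$ for the group law $m\colon G\times G\to G$, which gives $(n_G)_*(\alpha*\beta)=m_*\bigl((n_G)_*\alpha\times(n_G)_*\beta\bigr)=n^{i+j}(\alpha*\beta)$. Hence, for $i>g+r$, the $\Q$-vector space $I_G^{*i}\otimes\Q=(I_G\otimes\Q)^{*i}$ is contained in $\bigoplus_{m\geq i}H_0(G,\Q(0))_{(m)}=0$, so $I_G^{*i}$ is torsion. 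This proves the first assertion.

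For the statement over an algebraically closed field $k$ we may assume $g+r\geq1$ (otherwise $G$ is a point), so $i>g+r$ forces $i\geq2$. I would then bring in the generalized Albanese map $\mathrm{alb}\colon I_G\to G(k)$ attached to the commutative group variety $G$ (whose Albanese, with base point $0$, is $G$ itself); it is additive and sends $[a]-[0]$ to $a$ for $a\in G(k)$. Since over $k$ the group $\h_0(G,\Z)$ is generated by classes of $k$-points and $[a]*[b]=[a+b]$, a direct computation gives $\mathrm{alb}\bigl(([a]-[0])*([b]-[0])\bigr)=(a+b)-a-b+0=0$; hence $\mathrm{alb}$ annihilates $I_G^{*2}$, so $I_G^{*i}\subseteq\ker(\mathrm{alb})$ for all $i\geq2$. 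By Rojtman's theorem for semiabelian varieties (Theorem~\ref{rt}: Spiess--Szamuely \cite{SS} for the prime-to-$p$ part, Geisser \cite{Ge1,Ge2} for all torsion under resolution of singularities), $\mathrm{alb}$ induces an isomorphism $(I_G)_{\mathrm{tors}}\xrightarrow{\ \sim\ }G(k)_{\mathrm{tors}}$ on prime-to-$p$ torsion (resp.\ on all torsion), so $\ker(\mathrm{alb})$ has no nonzero prime-to-$p$ torsion (resp.\ is torsion-free). Combined with the fact, just proved, that $I_G^{*i}$ is torsion for $i>g+r$, this shows that $I_G^{*i}$ is $p$-primary torsion (resp.\ is zero) for $i>g+r$, as asserted.

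I expect the bulk of the actual work to be in checking conventions: that the Pontryagin product on $\h_0(G,\Z)$ agrees with the one induced by $m\colon G\times G\to G$ on motives, so that the grading argument survives after tensoring with $\Q$; that the degree map is indeed the projection onto the weight-zero summand $H_0(G,\Q(0))_{(0)}$; and that $\mathrm{alb}$ is normalized so that $\mathrm{alb}([a]-[0])=a$. Granting Theorems~\ref{mth} and~\ref{rt}, the rest of the argument is formal.
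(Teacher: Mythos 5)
Your proposal is correct and follows essentially the same route as the paper: the weight decomposition of $H_0(G,\Q(0))$ from Theorem \ref{mth}\,(2) with $p=q=0$, multiplicativity of the Pontryagin product with respect to the grading (Corollary \ref{dec}) to show $I_G^{*i}\otimes\Q=0$ for $i>g+r$, and then the observation that $\mathrm{alb}_G$ kills $I_G^{*2}\supseteq I_G^{*i}$ combined with Rojtman's theorem (Theorem \ref{rt}) to conclude vanishing up to $p$-torsion. The extra verifications you flag (compatibility of the product with the grading, identification of the degree map with projection onto the weight-zero piece) are exactly the points the paper handles via Corollary \ref{dec} and the decomposition, so nothing is missing.
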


\begin{rmk}
The subgroups $H_0(G,\Q)_{(i)}$ have a description in terms of $K$-groups attached to the semiabelian variety $G$ (see Proposition \ref{repK}).
\end{rmk}

\begin{rmk}\label{repft}
One can remove the assumption that a base field $k$ is perfect in the above theorems, by a result of Cisinski-D\'eglise on a comparison of Beilinson motives $\boldsymbol{\mathrm{DM}}_{\mf{b}}$ (see \cite[\S 14]{CD}) with Voevodsky motives:
If $k$ is not perfect, then we take the inseparable closure $k^\mathrm{insep}$ of $k$.
By \cite[Proposition 2.1.9, Theorem 14.3.3]{CD}, the pull-back functor
\[
\boldsymbol{\mathrm{DM}}_{\mf{b}}(k^\mathrm{insep}) \to \boldsymbol{\mathrm{DM}}_{\mf{b}}(k)
\]
is an equivalence.
By \cite[Theorem 16.1.4]{CD}, the above categories of Beilinson motives are equivalent to $\boldsymbol{\mathrm{DM}}(k^\mathrm{insep},\Q)$ and $\boldsymbol{\mathrm{DM}}(k,\Q)$ respectively.
\end{rmk}

\medskip
Theorem \ref{mth} is deduced form a result on vanishing of motivic (co)homology (Theorem \ref{vmh}) and a description of the motive $\M(G)$ in $\DM(k,\Q)$ in terms of symmetric products:
\[
\varphi_G:\M(G) \stackrel{\simeq}{\longrightarrow} \sym\bigl(\M_1(G)\bigr)
\]
which is due to Ancona--Enright-Ward--Huber \cite{AEH} (see Theorem \ref{mdec}, Corollary \ref{dec}).
Here $\M_1(G)$ denotes the complex consisting of the homotopy invariant Nisnevich sheaf $\widetilde{G}:={\rm Hom}_{{\rm Sch}/k}(-,G)\tensor \Q$ with transfers concentrated in degree zero.

There are two key ingredients in proof of vanishing of motivic (co)homology.
One is an exact triangle which relates $G$ to a semiabelian variety of rank ${\rm rk (G)}-1$ (Lemma \ref{t2}, Lemma \ref{keyt}).
The exact triangle allows us to use induction on rank of $G$.
Another is to interpret Weil-Barsotti formula for an abelian variety in terms of motives in $\DM$.
Using this interpretation, one can generalize Beauville's result to higher Chow groups, i.e.\ Theorem \ref{mth} for an abelian variety.
\bigskip

This paper is organized as follows:
In Section 2, we recall the result (Theorem \ref{mdec}) on the description of the motive $\M(G)$ of a semiabelian variety $G$ in $\DM(k,\Q)$.
We give a consequence of the description which is a decomposition of motivic (co)homology of semiabelian varieties. 

In Section 3, we state and prove our main result on vanishing of motivic (co)homology (Theorem \ref{vmh}) and a consequence of main result (Theorem \ref{mth}).
We first prove main result in case of a torus and a abelian variety.
Lastly, we prove by induction for general case, using key triangles (Lemma \ref{t2}, Lemma \ref{keyt}) in $\DM$.

In Section 4, we consider $0$-th Suslin homology and prove Corollary \ref{mthm} (Corollary \ref{shom}).
We also give a description of rational $0$-th Suslin homology of a semiabelian variety $G$ in terms of $K$-groups attached to $G$ (Proposition \ref{repK}, Remark \ref{shk}).

In Appendix A, we prove a lemma (Lemma \ref{skey}) about an exact sequence which is related to symmetric products in an abelian tensor category.
This lemma is used to get a key triangle (Lemma \ref{keyt}) in $\DM$.

In Appendix B, we make some remarks about the tensor product in the category $\ShT(k,\Q)$ of Nisnevich sheaves of $\Q$-modules with transfers.
Main part is to show the exactness of the tensor product.
The exactness is used to apply Lemma \ref{skey} to $\ShT(k,\Q)$.

\subsection*{Notation}
\begin{enumerate}
\item
Let $k$ be a perfect field.
For a commutative ring $R$, let $\DM(k,R)$ denote the Voevodsky's tensor triangulated category of effective motives over a perfect field $k$ with $R$-coefficients.
Let ${\rm \bf DM}_{-, \rm Nis}(k,R)$ denote the Voevodsky's tensor triangulated category of motives over $k$ with $R$-coefficients (cf. \cite{V}).
In case $R=\Q$, we simply write  $\DM$ (resp. ${\rm \bf DM}_{-, \rm Nis}$) for  $\DM(k,\Q)$ (resp. ${\rm \bf DM}_{-, \rm Nis}(k,\Q)$).

\item Let $X$ be a smooth scheme of finite type over $k$.
Let $R$ be a commutative ring.
We denote the motive of $X$ in $\DM(k,R)$ by $\M(X)$.
For any integers $p$ and $q$, we denote the motivic (co)homology of $X$ with $R$-coefficients by
\begin{align*}
H_p(X,R(q))&:={\rm Hom}_{{\rm \bf DM}_{-,\rm Nis}(k,R)}(R(q)[p],\M(X));\medskip\\
H^p(X,R(q))&:={\rm Hom}_{{\rm \bf DM}_{-,\rm Nis}(k,R)}(\M(X),R(q)[p]).
\end{align*}
\begin{thm}[\text{Voevodsky \cite{V2}}]\label{voe}
Motivic cohomology of smooth scheme $X$ over a field $k$ is isomorphic to higher Chow group for any coefficients $R:$ for any $p,q \in \Z$,
\begin{align}
H^p(X,R(q))\simeq \ch^q(X,2q-p;R).
\end{align}
\end{thm}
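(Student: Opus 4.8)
The plan is to identify both sides with the Nisnevich hypercohomology of the Suslin--Voevodsky motivic complex and then to compare that complex with Bloch's cycle complex, following the Suslin--Voevodsky and Friedlander--Voevodsky strategy in the form later refined by Voevodsky so as to dispense with resolution of singularities. First, recall that the Tate twist $R(q)$ is represented in $\mathbf{DM}_{-,\mathrm{Nis}}(k,R)$ by the complex of Nisnevich sheaves with transfers $C_*\bigl(R_{\mathrm{tr}}(\mathbb{G}_m^{\wedge q})\bigr)[-q]$ (equivalently, up to the normalization of the twist, $C_*\bigl(R_{\mathrm{tr}}(\mathbb{A}^q)/R_{\mathrm{tr}}(\mathbb{A}^q\setminus 0)\bigr)$). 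Its cohomology presheaves are homotopy invariant presheaves with transfers, so by Voevodsky's theorem on strict homotopy invariance of homotopy invariant Nisnevich sheaves with transfers the complex $R(q)$ is $\mathbb{A}^1$-local; consequently, for $X$ smooth over $k$,
\[
H^p(X,R(q))=\mathrm{Hom}_{\mathbf{DM}_{-,\mathrm{Nis}}(k,R)}(\M(X),R(q)[p])\;\simeq\;\mathbb{H}^p_{\mathrm{Nis}}(X,R(q)).
\]

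Next I would pass to equidimensional cycles. Let $z^q_{\mathrm{equi}}(-,\bullet)$ denote the Suslin complex of the presheaf of relative dimension-zero equidimensional cycles $U\mapsto z_{\mathrm{equi}}(\mathbb{A}^q\times U/U,0)$ on $\mathrm{Sm}/k$. The Suslin--Voevodsky comparison (``duality'') theorem, together with the cancellation theorem, yields a quasi-isomorphism of complexes of Nisnevich sheaves between $R(q)$ and $z^q_{\mathrm{equi}}(-,\bullet)\otimes R$, suitably reindexed so that $H^p(X,R(q))$ becomes $\mathbb{H}^p_{\mathrm{Nis}}$ of $z^q_{\mathrm{equi}}(-,\bullet)\otimes R$ placed in cohomological degree $2q-\bullet$.

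Finally I would compare with Bloch's complex and invoke descent. The inclusion of equidimensional cycles into all admissible cycles gives a morphism of complexes $z^q_{\mathrm{equi}}(X,\bullet)\to z^q(X,\bullet)$ into Bloch's cycle complex. Using Bloch's moving lemma together with the localization and homotopy-invariance properties of higher Chow groups, one shows this morphism is a quasi-isomorphism when $X$ is smooth and semilocal (in particular for the local rings of affine space), and that the presheaf $X\mapsto z^q(X,\bullet)$ satisfies Nisnevich (indeed Zariski, and cdh) descent. Hence the Nisnevich hypercohomology of the sheafified Bloch complex computes Bloch's higher Chow groups, and chaining the identifications gives
\[
H^p(X,R(q))\;\simeq\;H_{2q-p}\bigl(z^q(X,\bullet)\otimes R\bigr)\;=\;\ch^q(X,2q-p;R).
\]

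The substantive content lies in the last step: Bloch's moving lemma and the localization theorem for higher Chow groups of an arbitrary smooth variety, and---crucially in positive characteristic---the removal of any appeal to resolution of singularities, which is Voevodsky's refinement exploiting the behaviour of the presheaf $z_{\mathrm{equi}}$ under the cdh topology together with a proper base change argument. The first step also rests on the full apparatus of homotopy invariant sheaves with transfers (Gersten resolution, strict homotopy invariance), which is itself substantial; for the purposes of this paper both of these inputs may legitimately be invoked as black boxes, since Theorem \ref{voe} is quoted from \cite{V2}.
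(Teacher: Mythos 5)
The paper does not prove this statement at all: it is quoted as a black box from Voevodsky \cite{V2}, so there is no internal proof to compare against. Your outline is a correct summary of the strategy of the cited literature (strict homotopy invariance identifying $H^p(X,R(q))$ with Nisnevich hypercohomology of $R(q)$, comparison with equidimensional cycle complexes via Suslin--Voevodsky duality and cancellation, then Suslin's moving lemma, localization and Zariski/Nisnevich descent to reach Bloch's complex, with the cdh-topology argument removing resolution of singularities), i.e.\ essentially the same approach as the source the paper relies on.
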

\item
For an integer $i\geq0$, we define the subgroup $H_p(G,\Q(q))^{(i)}$ (resp. $H^p(G,\Q(q))^{(i)}$) of $H_p(G,\Q(q))$ (resp. $H^p(G,\Q(q)))$) as follows:
\begin{align*}
H_p(G,\Q(q))_{(i)}&:=\{\alpha \in H_p(G,\Q(q)) \ |\ n_{G*}\alpha=n^i\alpha \ \text{for all}\ n\},\\
H^p(G,\Q(q))^{(i)}&:=\{\alpha \in H_p(G,\Q(q)) \ |\ n_G^*\alpha=n^i\alpha \ \text{for all}\ n\}.
\end{align*}
Here ${n_G}$ denotes the map $G \stackrel{\times n}{\longrightarrow} G$ of multiplication by an integer $n$.

\item 
For a positive integer $n$, let $\Sigma_n$ denote the group of permutations on $n$ letters.
Let $\ml{C}$ be a idempotent complete $\Q$-linear symmetric monoidal  category, and let $M$ be an object of $\ml{C}$.
Then we have a representation $\Sigma_n \to {\rm End}(M^{\tensor n})$, and let $\sigma_M $ denote the endomorphism of $M$ corresponding to $\sigma \in \Sigma_n$.
We define $s_M^n$ as follows:
\begin{align*}
s_M^n:=
\begin{cases}
\qquad {\rm id}_{\boldsymbol{\mathbbm{1}}} &\text{if $n=0$},\\
\displaystyle \frac{1}{n!}\sum_{\sigma \in \Sigma_n} \sigma_M &\text{if $n>0$}.
\end{cases}
\end{align*}
Here $\boldsymbol{\mathbbm{1}}$ denotes the unit object of $\ml{C}$.
Then one can easily see that $s_M^n$ is idempotent.
We define the $n$-th symmetric product $\sym^nM$ of $M$ as follows:
\begin{align*}
\sym^nM:=\im (s_M^n).
\end{align*}
This image exists since $\ml{C}$ is idempotent complete.
We write the canonical projection $M^{\tensor n}\to \sym^n M$ by $s_M^n$, and the canonical embedding $\sym^n M \to M^{\tensor n}$ by $\iota_M^n$.
\end{enumerate}


\section{The motive of a semiabelian variety}


\subsection{The isomorphism $\boldsymbol{\varphi_G}$}
We here recall a recent result of a decomposition of the motive of semiabelian varieties in $\DM(k,\Q)$, which is  due to Enright-Ward \cite{EW}/Ancona--Enright-Ward--Huber \cite{AEH}.

\begin{df}
Let $G$ be a semiabelian variety over $k$.
Then the \'etale sheaf $\widetilde{G}:={\rm Hom}_{{\rm Sch}/k}(-,G)\tensor \Q$ has a canonical structure of \'etale sheaf with transfers, and furthermore $\widetilde{G}$ is homotopy invariant (\cite[proof of Lemma 3.2]{SS} \cite[Lemma 1.3.2]{BK}).

Following \cite{AEH,EW}, we define $\M_1(G)$ to be the complex in $\DM$ consisting of $\widetilde{G}$ concentrated in degree zero.
\end{df}

\textbf{Construction of $\boldsymbol{\varphi_G}$:}
According to Spiess--Szamuely \cite{SS}, we have  morphism $\Z_{\rm tr}(G) \to \widetilde{G}$ of \'etale sheaves with transfers, and therefore there is a morphism in $\DM$:
\[
a_G: \M(G) \to \M_1(G).
\]
Using this map, we define $\varphi_G^i : \M(G) \to \sym^i(\M_1(G))$ to be the composite morphism in $\DM$:
\[
\xymatrix{
\M(G)\ar[r]^{\M(\Delta_G^i)}&\ \M(G)^{\tensor i} \ar[r]^{a_G^{\tensor i}}& \M_1(G)^{\tensor i} \ar[r]^{s_{\M_1(G)}^i}&\ \ \sym^i(\M_1(G)).
}\]

\begin{df}[Kimura \cite{Ki}]
Let $M$ be an object of $\DM$.
We say that $M$ has \textit{finite Kimura dimension} if $\sym^n(M)=0$ for some $n\geq0$.
\end{df}
In \cite{EW} and \cite{AEH}, they showed Kimura finite dimensionality of $\M_1(A)$ for an abelian variety $A$, using Kimura finite dimensionality of the Chow motive $h^1(A)$(\cite{Ki}, \cite{K}).
By Lemma \ref{keyt} and the result in case of abelian varieties, they showed Kimura finite dimensionality of $\M_1(G)$:

\begin{prop}[\text{\cite[Proposition 5.1.1]{AEH},\cite[Lemma 5.7.4]{EW}}]\label{vm}
Let $G$ be a semiabelian variety over $k$, which is an extension of an abelian variety of dimension $g$ by a torus of rank $r$.
Then $\sym^i(\M_1(G))$ is zero in $\DM$ for $i>2g+r$.
In particular, $\sym^{sg+r}(\M_1(G))\simeq \Lambda(g+r)[2g+r]$ with a tensor-invertible Artin motive $\Lambda$.
If the torus part of $G$ is split,  then $\Lambda=\Q$ $($the unit motive$)$.
\end{prop}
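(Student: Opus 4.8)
The plan is to induct on the torus rank $r$, using the rank-reducing triangle (Lemma~\ref{t2}, Lemma~\ref{keyt}), which is extracted from the symmetric-power exact sequence of Lemma~\ref{skey}, to pass from $G$ to a semiabelian variety of rank $r-1$, and reading off the top symmetric power along the way.

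For $r=0$ the variety $G=A$ is an abelian variety (with trivially split torus part), and the statement is classical: in the Chow--K\"unneth decomposition $\M(A)=\bigoplus_{i=0}^{2g}\M_i(A)$ of Deninger--Murre and K\"unnemann, the component $\M_i(A)$ is precisely $\sym^i(\M_1(A))$; by the Kimura finite dimensionality of $h^1(A)\simeq\M_1(A)$ (\cite{Ki},\cite{K}) this vanishes for $i>2g$, while $\M_{2g}(A)\simeq\Q(g)[2g]$, so here $\Lambda=\Q$.

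For $r\ge1$, one produces a rank-one torus $T_1$ sitting in an exact sequence $1\to T_1\to G\to G'\to1$ with $G'$ semiabelian of rank $r-1$ (a rank-one subtorus of $T$, after a finite separable base change splitting $T$ if necessary; see below). Since $G\to G'$ is a $T_1$-torsor, this yields a short exact sequence $0\to\widetilde{T_1}\to\widetilde{G}\to\widetilde{G'}\to0$ in the abelian category of homotopy invariant Nisnevich sheaves with transfers, where $\widetilde{T_1}=\M_1(T_1)\simeq\chi(1)[1]$ for a rank-one Artin motive $\chi$ (tensor-invertible, $\chi^{\otimes2}\simeq\Q$, and $\chi=\Q$ when $T_1\simeq\mathbb{G}_m$). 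Because the Koszul sign on the shift makes $\Sigma_a$ act on $(\chi(1)[1])^{\otimes a}\simeq\chi^{\otimes a}(a)[a]$ through the sign character, the symmetrizer $s^a$ is zero for $a\ge2$, so $\sym^a(\M_1(T_1))=0$ for $a\ge2$. Hence, applying Lemma~\ref{skey} in $\ShT(k,\Q)$ --- legitimate since its tensor product is exact (Appendix~B), so that the symmetric powers and tensor products formed there coincide with those in $\DM$ --- the resulting filtration of $\sym^n(\M_1(G))$ has only the two graded pieces coming from $a=0,1$, and therefore reduces to a distinguished triangle in $\DM$
\[
\chi(1)[1]\otimes\sym^{n-1}(\M_1(G'))\ \longrightarrow\ \sym^n(\M_1(G))\ \longrightarrow\ \sym^n(\M_1(G'))\ \xrightarrow{+1},
\]
which is the key triangle of Lemma~\ref{keyt}. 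Invoking the inductive hypothesis for $G'$, namely $\sym^j(\M_1(G'))=0$ for $j>2g+r-1$ and $\sym^{2g+r-1}(\M_1(G'))\simeq\Lambda'(g+r-1)[2g+r-1]$ with $\Lambda'$ tensor-invertible Artin (and $\Lambda'=\Q$ if $T$ is split), one concludes: for $n>2g+r$ both outer terms of the triangle vanish, so $\sym^n(\M_1(G))=0$; for $n=2g+r$ the third term vanishes, so $\sym^{2g+r}(\M_1(G))\simeq\chi(1)[1]\otimes\Lambda'(g+r-1)[2g+r-1]\simeq(\chi\otimes\Lambda')(g+r)[2g+r]$, and $\Lambda:=\chi\otimes\Lambda'$, a tensor product of tensor-invertible Artin motives, is again tensor-invertible Artin, equal to $\Q$ in the split case. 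This closes the induction.

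I expect the genuinely delicate point to be the key triangle (Lemma~\ref{keyt}) itself. On one side, a torus of rank $\ge2$ need not contain a rank-one subtorus defined over $k$ (e.g.\ the norm-one torus of a non-Galois cubic extension), so producing the rank-reducing datum really forces a passage to a splitting field --- harmless for the vanishing assertion because $f^\ast\colon\DM(k,\Q)\to\DM(k',\Q)$ is conservative ($M$ is a retract of $f_\ast f^\ast M$ once $[k':k]$ is inverted), with $\Lambda$ recovered afterwards by descent --- or else a direct treatment via the short exact sequence $0\to\widetilde{T}\to\widetilde{G}\to\widetilde{A}\to0$ together with the identification $\widetilde{T}\simeq L_T(1)[1]$, which gives $\sym^a(\M_1(T))\simeq(\wedge^aL_T)(a)[a]$ for the Artin motive $L_T$ attached to the character lattice of $T$. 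On the other side, Lemma~\ref{skey} is proved in an \emph{abelian} tensor category, so one must check that the symmetric powers and tensor products occurring there are the underived ones and therefore agree with those in $\DM$ --- precisely the role of the exactness of $\otimes$ on $\ShT(k,\Q)$ (Appendix~B). The Kimura finiteness of $h^1(A)$ behind the base case is taken as a known input.
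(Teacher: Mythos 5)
Your proof is correct and follows essentially the same route as the paper (which cites the statement from Ancona--Enright-Ward--Huber and Enright-Ward and sketches exactly this argument): Kimura finite dimensionality of $h^1(A)$ for the abelian base case, then induction on the torus rank via the key triangle of Lemma~\ref{keyt}, itself extracted from Lemma~\ref{skey} applied in $\ShT(k,\Q)$ using the exactness of the tensor product (Appendix~B) and the vanishing $\sym^n(\M_1(\mathbb{G}_m))=0$ for $n\geq 2$. Your handling of the non-split case (passing to a splitting field with conservativity, or arguing directly with $0\to\widetilde{T}\to\widetilde{G}\to\widetilde{A}\to 0$ and $\sym^a(\M_1(T))\simeq(\wedge^a L_T)(a)[a]$) fills in the only point the paper leaves entirely to the cited references.
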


Let $\sym (\M_1(G))$ denote the direct sum $\displaystyle \bigoplus_{i\geq0} \sym^i(\M_1(G))$.
Here the sum is finite from Proposition \ref{vm}.
Then we define 
\begin{align}\label{iso}
\varphi_G:=\bigoplus \varphi_G^i:\M(G) \to \sym(\M_1(G)).
\end{align}

The motive $\M(G)$ has a canonical Hopf algebra structure defined by morphisms of schemes: 
\begin{itemize}
\item
the multiplication by the group law $m_G:G\times G\to G$;
\item
the comultiplication by the diagonal map $\Delta_G : G\to G\times G$;
\item
the antipodal map by the inverse on $G$;
\item
the unity by the neutral element;
\item
the counit by the structure map $G \to \mathrm{Spec}(k).$
\end{itemize}

\begin{df}
We define a Hopf algebra structure on $\sym (\M_1(G))$ as follows:
\begin{description}
\item[(multiplication)] For any $i,j\geq 0$,
\[
\frac{(i+j)!}{i!j!}s_{\M_1(G)}^{i+j}\circ (\iota_{\M_1(G)}^{i}\tensor\iota_{\M_1(G)}^{j}):\sym^i(\M_1(G))\tensor \sym^j(\M_1(G))\to\sym^{i+j}(\M_1(G));
\]
\item[(comultiplication)] For any $i,j\geq 0$,
\[
(s_{\M_1(G)}^{i}\tensor s_{\M_1(G)}^{j})\circ\iota_{\M_1(G)}^{i+j} :\sym^{i+j}(\M_1(G))\to\sym^i(\M_1(G))\tensor \sym^j(\M_1(G)).
\]
\end{description}
The antipodal map, the unity and the counit are induced by the inverse map, the unit map and the structure map of $\M_1(G)$.
\end{df}


\begin{rmk}
The definition of bialgebra structure on $\sym $ in the above definition is not familiar one.
The reasons why we use the above definition are to fit into the classical result on motivic decomposition of Chow motive of abelian varieties (cf. \cite{K}) and to make the map \eqref{iso} to be an isomorphism of Hopf algebra objects in $\DM$.
For a detail see \cite{AEH}.
\end{rmk}

\begin{thm}[Ancona--Enright-Ward--Huber \text{\cite[Theorem 7.1.1]{AEH}}]\label{mdec}
Let $G$ be a semiabelian variety over a perfect field $k$ and let $\M(G)$ be the motive of $G$ in $\DM(k,\Q)$.
Then there exists a natural isomorphism of Hopf algebras in $\DM(k,\Q)${\rm :}
\[
\varphi_G:\M(G) \stackrel{\simeq}{\longrightarrow} \sym\bigl(\M_1(G)\bigr).
\]
In particular, the following diagrams commute{\rm :}
\begin{align}\label{comp}
\xymatrix{
\M(G)\tensor\M(G) \ar[d]_{\varphi_G\tensor\varphi_G}^{\simeq}\ar[r]^{\M(m_G)}& \M(G)\ar[d]_{\simeq}^{\varphi_G}\\
\sym\bigl(\M_1(G)\bigr)\tensor\sym\bigl(\M_1(G)\bigr)\ar[r]& \sym\bigl(\M_1(G)\bigr)
;}\\
\quad \xymatrix{
 \M(G)\ar[d]_{\simeq}^{\varphi_G} \ar[r]^{\M(\Delta_G)}&\M(G)\tensor\M(G)\ar[d]_{\varphi_G\tensor\varphi_G}^{\simeq}\\
\sym\bigl(\M_1(G)\bigr)\ar[r]& \sym\bigl(\M_1(G)\bigr)\tensor\sym\bigl(\M_1(G)\bigr)
.}
\end{align}
\end{thm}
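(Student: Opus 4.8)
The plan is to establish the isomorphism $\varphi_G$ by reduction to the case of an abelian variety together with an induction on the rank $r$ of $G$, using the key exact triangle (Lemma~\ref{t2}, Lemma~\ref{keyt}) that relates $G$ to a semiabelian variety of rank $r-1$. For the base case $r=0$, i.e.\ $G=A$ an abelian variety, the statement that $\M(A)\simeq\sym(\M_1(A))$ as Hopf algebras in $\DM$ is exactly the content of the earlier works of Enright-Ward and Kings--K\"unnemann--Deninger-type decompositions; here $\M_1(A)=h^1(A)[0]$ and the symmetric-power decomposition is the motivic refinement of the classical Chow--K\"unneth decomposition. One also needs to treat the case of a torus $T$ separately: $\M_1(T)=\widetilde T[0]$ is (rationally) a direct sum of Tate twists $\Q(1)[1]$ indexed by a basis of the character lattice (after a finite \'etale extension splitting $T$), so $\M(T)$ decomposes as an exterior-type algebra and the formula of Proposition~\ref{vm} ($\sym^i$ vanishes for $i>r$, with top piece $\Lambda(r)[r]$) holds; the Hopf-algebra compatibility is a direct check from the group law on $T$ being coordinatewise multiplication.

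For the inductive step, given $1\to T\to G\to A\to 1$ write $G$ as an extension and choose a rank-one subtorus $\mb{G}_m\hookrightarrow T$ with quotient $G'$ a semiabelian variety of rank $r-1$. First I would invoke the key triangle
\[
\M_1(\mb{G}_m)\to \M_1(G)\to \M_1(G')\to \M_1(\mb{G}_m)[1]
\]
in $\DM$ (Lemma~\ref{keyt}), noting $\M_1(\mb{G}_m)=\widetilde{\mb{G}_m}[0]\simeq\Q(1)[1]$. The crucial input is Lemma~\ref{skey} from Appendix~A, which describes how $\sym$ of the middle term of a short exact sequence (in an abelian tensor category, applied via Appendix~B to sheaves with transfers) is built from $\sym$ of the outer terms; concretely it yields a filtration on $\sym^n(\M_1(G))$ with graded pieces $\sym^a(\M_1(\mb{G}_m))\tensor\sym^b(\M_1(G'))=\Lambda_0(a)[a]\tensor\sym^b(\M_1(G'))$ for $a+b=n$, $a\in\{0,1\}$. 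Matching this against the analogous filtration on $\M(G)$ coming from the fibration $G\to G'$ with fiber $\mb{G}_m$ — i.e.\ the Gysin/localization triangle expressing $\M(G)$ in terms of $\M(G')$ and $\M(G')(1)[1]$ — one checks that $\varphi_G^i$ is filtered, induces $\varphi_{\mb{G}_m}\tensor\varphi_{G'}$ on graded pieces which is an isomorphism by induction and the torus case, hence $\varphi_G$ is an isomorphism by the five lemma applied inductively on the filtration length. The Hopf-algebra compatibility \eqref{comp} is then inherited: the multiplication and comultiplication on both sides are defined so as to be compatible with the triangle, so it suffices to check commutativity of the two squares on associated graded, where it reduces to the already-known cases.

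The main obstacle I expect is the compatibility of the two filtrations — the one on $\M(G)$ from the $\mb{G}_m$-fibration structure and the one on $\sym(\M_1(G))$ from Lemma~\ref{skey} — in a way that is compatible with the maps $\M(\Delta_G^i)$ and $a_G^{\tensor i}$ defining $\varphi_G^i$. This requires knowing that $a_G:\M(G)\to\M_1(G)$ is filtered and that the diagonal $\Delta_G$ interacts correctly with the projection $G\to G'$; the naturality of the key triangle in $G$ (functoriality in the extension data) is what makes this go through, but verifying it demands care with the identification $\M_1(\mb{G}_m)\simeq\Q(1)[1]$ and with the sign/coefficient bookkeeping in the non-standard Hopf structure on $\sym$ (the binomial factors $\tfrac{(i+j)!}{i!j!}$). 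A secondary subtlety is the appearance of the Artin motive $\Lambda$ when $T$ is non-split: one handles this by base-changing to a finite separable extension $k'/k$ over which $T$ splits, proving the isomorphism there, and then descending along $\mathrm{Gal}(k'/k)$ using that all constructions are $\mathrm{Gal}(k'/k)$-equivariant and that $\DM(k,\Q)\to\DM(k',\Q)$ is conservative enough on the relevant objects — or, as in \cite{AEH}, by noting that $\Lambda$ is precisely the Artin motive attached to the $\mathrm{Gal}$-module of cocharacters of $T$ and that the twisted symmetric powers are computed accordingly.
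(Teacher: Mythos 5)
First, a remark on what you are being compared against: the paper gives no proof of Theorem \ref{mdec} at all --- it is imported verbatim from Ancona--Enright-Ward--Huber \cite{AEH} (building on Enright-Ward's thesis \cite{EW}), and only the construction of $\varphi_G$ is recalled. Your sketch does follow essentially the strategy of that cited source: treat tori and abelian varieties first (for abelian varieties the statement is the Deninger--Murre/K\"unnemann decomposition \cite{DM,K}, i.e.\ $h_1(A)$ rather than $h^1(A)$, transported to $\DM$ via the full faithfulness of Chow motives), then induct on the torus rank by comparing the Gysin triangle of Lemma \ref{t2} with the symmetric-power triangle of Lemma \ref{keyt} (obtained from Lemma \ref{skey}), using $\sym^a(\Q(1)[1])=0$ for $a\geq 2$. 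For the non-split torus issue your descent discussion is more complicated than needed: since $\varphi_G$ is already defined over $k$, one only needs that pullback to a finite separable splitting field is conservative (rationally, via transfers); no equivariance of the construction has to be tracked.

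The genuine gap is that the two hard points are asserted rather than proved. The claim that $\varphi_G$ is ``filtered'' and induces $\varphi_{\mb{G}_m}\tensor\varphi_{B}$ on graded pieces --- i.e.\ that $(\varphi_B(1)[1],\varphi_G,\varphi_B)$ is a morphism from the triangle of Lemma \ref{t2} to the sum over $n$ of the triangles of Lemma \ref{keyt}, compatibly with the boundary maps --- is exactly the technical core of \cite{AEH,EW}: it requires analyzing how $a_G$, the iterated diagonals and the symmetrizers interact with the $\mb{G}_m$-torsor structure of $G\to B$, and ``one checks'' does not discharge it. Second, your plan to verify the Hopf-algebra compatibility \eqref{comp} ``on associated graded'' is not a valid argument in a triangulated category: commutativity of a square cannot in general be tested on graded pieces of a filtration, since the discrepancy could be a morphism shifting the filtration. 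In \cite{AEH} the compatibility with the comultiplication is essentially built into the definition of $\varphi_G$ (via $\M(\Delta_G^i)$), while compatibility with the multiplication is a direct computation using that $a_G:\M(G)\to\M_1(G)$ is additive for the group law together with the binomial normalization $\frac{(i+j)!}{i!j!}$ in the product on $\sym$; that computation is what your sketch would still need to supply.
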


\subsection{Decomposition of motivic (co)homology of a semiabelian variety}

We give a decomposition of motivic (co)homology and a relationship between the decomposition and product structure, which  is a consequence of Theorem \ref{mdec}.
We first introduce product on motivic (co)homology.

\begin{df}
(1)
Let $a$ and $b$ be elements in $H_p(G,\Q(q))$ and $H_{p'}(G,\Q(q'))$ respectively.
Then we define \textit{the Pontryagin product} $a*b$ of $a$ and $b$ to be the image of $(a,b)$ under the morphism
\begin{align}\label{m1}
H_p(G,\Q(q))\times H_{p'}(G,\Q(q')) \to H_{p+p'}(G\times G,\Q(q+q')) \stackrel{m_{G*}}{\to} H_{p+p'}(G,\Q(q+q')).
\end{align}
Here the first map is induced by the tensor structure in $\DM$.
For any subgroups $F\subset H_p(G,\Q(q))$ and $G\subset H_{p'}(G,\Q(q'))$, we write $F*G$ for the image of $F\times G$ under morphism \eqref{m1}. 

\medskip
(2)
For $a \in H^p(G,\Q(q))$ and $b \in H^{p'}(G,\Q(q'))$, we define \textit{the cup product} $a\cup b$ to be the image of $(a,b)$ under the morphism
\begin{align}\label{m2}
H^p(G,\Q(q))\times H^{p'}(G,\Q(q')) \to H^{p+p'}(G\times G,\Q(q+q')) \stackrel{\Delta_{G}^*}{\to} H^{p+p'}(G,\Q(q+q')).
\end{align}
For any subgroups $F\subset H^p(G,\Q(q))$ and $G\subset H^{p'}(G,\Q(q'))$, we write $F\cup G$ for the image of $F\times G$ under morphism \eqref{m2}. 
\end{df}


\begin{cor}\label{dec}
Let $G$ be a semiabelian variety over $k$, which is an extension of an abelian variety of dimension $g$ by a torus of rank $r$.
Let $p,q$ be integers.
Then we have the following natural decomposition of motivic (co)homology of $G${\rm :}
\[
H_p(G,\Q(q))=\bigoplus_{i=0}^{2g+r}H_p(G,\Q(q))_{(i)}, \quad H^p(G,\Q(q))=\bigoplus_{i=0}^{2g+r}H^p(G,\Q(q))^{(i)}
\]
which satisfies
\begin{align*}
H_p(G,\Q(q))_{(i)}*H_{p'}(G,\Q(q'))_{(i')} &\subset H_{p+p'}(G,\Q(q+q'))_{(i+i')},\\
H^p(G,\Q(q))^{(i)}\cup H^{p'}(G,\Q(q'))^{(i')} &\subset H^{p+p'}(G,\Q(q+q'))^{(i+i')}.
\end{align*}
\end{cor}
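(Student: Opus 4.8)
The plan is to transport the decomposition along the isomorphism $\varphi_G$ of Theorem \ref{mdec}. The key observation is that, under $\varphi_G$, the multiplication-by-$n$ map $n_G : G \to G$ corresponds to the Hopf-algebra endomorphism of $\sym(\M_1(G))$ which is $n^i \cdot {\rm id}$ on the summand $\sym^i(\M_1(G))$. Indeed, $n_G = m_G \circ \cdots \circ m_G$ composed with the appropriate diagonal (or, more cleanly, $n_G$ on $\M_1(G)$ itself is just multiplication by $n$ since $\M_1(G)$ is a $\Q$-linear object built from the homotopy invariant sheaf $\widetilde{G}$, and the group law on $\widetilde{G}$ makes $n_G$ act as the $n$-fold addition, i.e. scalar $n$). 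Therefore on $\M_1(G)^{\tensor i}$ the map $n_G^{\tensor i}$ acts by $n^i$, hence on the direct summand $\sym^i(\M_1(G))$ it acts by $n^i$. Since by Proposition \ref{vm} $\sym^i(\M_1(G)) = 0$ for $i > 2g+r$, only the summands $i = 0, \dots, 2g+r$ occur.

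Concretely, the steps are as follows. First I would record that for each $i$, $\varphi_G$ carries the idempotent $\M(G) \to \M(G)$ projecting onto the image of $\sym^i(\M_1(G))$ (the composite $\iota_{\M_1(G)}^i \circ s_{\M_1(G)}^i$ transported back via $\varphi_G^{-1}$) to an endomorphism $e_i$ of $\M(G)$, and that $\sum_{i=0}^{2g+r} e_i = {\rm id}_{\M(G)}$ with $e_i e_j = \delta_{ij} e_i$. Second, applying the functors ${\rm Hom}_{{\rm \bf DM}_{-,\rm Nis}}(\Q(q)[p], -)$ and ${\rm Hom}_{{\rm \bf DM}_{-,\rm Nis}}(-, \Q(q)[p])$ to this decomposition of ${\rm id}_{\M(G)}$ yields direct sum decompositions of $H_p(G,\Q(q))$ and $H^p(G,\Q(q))$ into the images of the induced idempotents. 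Third, I would identify the $i$-th summand with $H_p(G,\Q(q))_{(i)}$ (resp. $H^p(G,\Q(q))^{(i)}$): since $n_{G*}$ acts as $n^i$ on the $i$-th summand, that summand is contained in the $n^i$-eigenspace for every $n$; conversely an element $\alpha$ with $n_{G*}\alpha = n^i \alpha$ for all $n$ has, in each other summand $j \neq i$, a component killed by $(n^j - n^i)$ for all $n$, hence zero (choosing $n$ with $n^j \neq n^i$, e.g. $n=2$, suffices over $\Q$). This gives the claimed decomposition.

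For the compatibility with products, I would use the commuting squares \eqref{comp}. The Pontryagin product is induced by $m_{G*}$, which under $\varphi_G$ becomes the multiplication of the Hopf algebra $\sym(\M_1(G))$; by its very definition this multiplication sends $\sym^i \tensor \sym^j$ into $\sym^{i+j}$. Hence $n_{G*}$ acting by $n^i$ on one factor and $n^{i'}$ on the other forces the product to lie in the $n^{i+i'}$-eigenspace, i.e. in $H_{p+p'}(G,\Q(q+q'))_{(i+i')}$. Symmetrically, the cup product is induced by $\Delta_G^*$, which under $\varphi_G$ is the comultiplication of $\sym(\M_1(G))$, sending $\sym^{i+j}$ into $\sym^i \tensor \sym^j$; dualizing the eigenvalue bookkeeping gives the stated inclusion for $\cup$.

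I expect the only real subtlety to be the first step: verifying cleanly that $n_G$ induces multiplication by $n$ on $\M_1(G)$ (equivalently, that the addition on $\widetilde{G}$ is $\Q$-linearly the $n$-fold sum), and hence by $n^i$ on $\sym^i(\M_1(G))$. This is essentially the statement that $\M_1(G)$, being a $1$-motive-type object, is a "weight one" piece; it can be checked on the level of the sheaf $\widetilde{G} = {\rm Hom}_{{\rm Sch}/k}(-,G)\tensor\Q$, where $n_G$ literally is $x \mapsto x + \cdots + x$ ($n$ times), which after tensoring with $\Q$ and using that $\widetilde{G}$ is a sheaf of $\Q$-vector spaces is multiplication by $n$. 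Once this is in hand, everything else is formal manipulation of idempotents and the Hopf structure supplied by Theorem \ref{mdec}.
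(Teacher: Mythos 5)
Your proposal is correct and follows essentially the same route as the paper: the paper's (much terser) proof likewise observes that $n_G$ acts as $n^i\cdot\mathrm{id}$ on $\sym^i(\M_1(G))$, identifies $H_p(G,\Q(q))_{(i)}$ and $H^p(G,\Q(q))^{(i)}$ with ${\rm Hom}$-groups out of/into $\sym^i(\M_1(G))$, and then invokes Theorem \ref{mdec} (including the Hopf-algebra compatibility \eqref{comp}) for the decomposition and the product statements. Your extra details --- the eigenvalue argument with $n=2$ and the explicit reduction of $n_G$ to scalar $n$ on the $\Q$-linear sheaf $\widetilde{G}$ --- are exactly the steps the paper leaves implicit.
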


\begin{proof}
The map $n_G$ on $\sym^i(\M_1(G))$ is $n^i\cdot \mathrm{id}_{\sym^i(\M_1(G))}$.
Thus we have
\begin{align*}
H_p(G,\Q(q))_{(i)}=\HomD(\Q(q)[p],\sym^i(\M_1(G))),\\
H^p(G,\Q(q))^{(i)}=\HomD(\sym^i(\M_1(G)),\Q(q)[p]).
\end{align*}
The assertion follows from this description and Theorem \ref{mdec}.
\end{proof}


\section{Main result}
In this section, we state main result on a vanishing of motivic (co)homology of a semiabelian variety and its concequenses.
%

\begin{thm}\label{vmh}
Let $G$ be a semiabelian variety over $k$, which is an extension of an abelian variety of dimension $g$ by a torus of rank $r$.
Let $p,q$ be integers.
Then
\begin{enumerate}[{\rm (1)}]
\item 
$H_p(G,\Q(q))$ vanishes in the following cases
\begin{enumerate}[{\rm (a)}]
\item $p<q;$
\item $g+r<q;$
\item $p-2q+r<0;$
\item $q=g+r$ and $p-2q+r\geq 1;$
\item $q=g+r-1$ and $p-2q+r\geq 2$.
\end{enumerate}
\item 
Let $i_0=\max\{0,q\}$ and $i_1=\min\{2g+r,g+r+p-q\}$.
Then $H_p(G,\Q(q))_{(i)}$ vanishes for $i \not \in [i_0,i_1]$.
\item 
Let $j_0=\max\{ 0,q-s\}$ and $j_1=\min\{2g+r, g+q\}$.
$\ch^q(G,s;\Q)^{(i)}$ vanishes for $i\not \in [j_0,j_1]$.
\end{enumerate}
\end{thm}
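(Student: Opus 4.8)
The plan is to reduce everything to the decomposition $\M(G) \simeq \bigoplus_{i=0}^{2g+r} \sym^i(\M_1(G))$ of Theorem \ref{mdec}, so that by Corollary \ref{dec} it suffices to control, for each $i$, the graded piece $H_p(G,\Q(q))_{(i)} = \HomD(\Q(q)[p],\sym^i(\M_1(G)))$ and its dual. The three parts of the theorem are really the same statement viewed through the relations $i_0 = \max\{0,q\}$, $i_1 = \min\{2g+r, g+r+p-q\}$ for homology, $j_0 = \max\{0,q-s\}$, $j_1 = \min\{2g+r, g+q\}$ for cohomology (recall $H^p = \ch^q(G,2q-p)$ by Theorem \ref{voe}, so substituting $s = 2q-p$ converts one range into the other), and part (1) lists the special boundary cases that follow by unwinding (2). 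So the heart of the matter is part (2): the vanishing of $H_p(G,\Q(q))_{(i)}$ for $i < i_0$ and for $i > i_1$.

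First I would dispose of the case of a split torus and then of an abelian variety, which are the base cases of an induction on the rank $r$. For a split torus $T = \mb{G}_m^r$ one has $\M_1(T) = \Q(1)[1]^{\oplus r}$ (rationally), hence $\sym^i(\M_1(T)) = \Q(i)[i]^{\oplus \binom{r}{i}}$, and $\HomD(\Q(q)[p],\Q(i)[i]) = H_{p-i}(\mathrm{Spec}\,k,\Q(q-i))$, which by the known vanishing of motivic homology of a point is zero unless $q - i = 0$ and $p - i \geq 0$; tracking this through gives exactly the claimed range when $g = 0$. For an abelian variety $A$, $\sym^i(\M_1(A))$ is (a shift/twist of) the Chow motive $h^i(A)$, concentrated in the appropriate weight, and the vanishing ranges follow from the classical Beauville decomposition (Theorem \ref{tbe}) together with the dimension bounds $h^i(A) = 0$ for $i > 2g$ and the weight constraints; this is the point where "interpret the Weil--Barsotti formula in $\DM$" (mentioned in the introduction) is used to pin down the twist on $\sym^{2g}(\M_1(A)) \simeq \Q(g)[2g]$ and more generally to handle the top pieces.

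Next comes the inductive step in $r$. Using the key triangle (Lemma \ref{t2}/Lemma \ref{keyt} — the exact triangle relating $G$ of rank $r$ to a semiabelian variety $G'$ of rank $r-1$, obtained from an extension by $\mb{G}_m$), I would derive a corresponding triangle on the level of the $\sym^i(\M_1(-))$ pieces: the sheaf-level sequence $0 \to \widetilde{G'} \to \widetilde{G} \to \Q(1)[1]$-type sequence (the precise form is what Appendix A's Lemma \ref{skey} is designed to produce) yields, after applying $\sym$, a filtration/triangle of $\sym^i(\M_1(G))$ with graded pieces $\sym^{i}(\M_1(G')) $ and $\sym^{i-1}(\M_1(G')) \tensor \Q(1)[1]$. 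Then the long exact sequence in $\HomD(\Q(q)[p], -)$ sandwiches $H_p(G,\Q(q))_{(i)}$ between $H_p(G',\Q(q))_{(i)}$ and $H_{p-1}(G',\Q(q-1))_{(i-1)}$; by the inductive hypothesis both outer terms vanish whenever $i$ lies outside the range for $G$, because the range for $G$ in rank $r$ is exactly arranged (one checks: $g+r+p-q$ for $G$ matches $g+(r-1)+p-q$ for $G'$ in the first term and $g+(r-1)+(p-1)-(q-1) = g+r+p-q-1$... wait, need $i-1 \le$ range for $G'$, i.e.\ $i-1 \le g+(r-1)+(p-1)-(q-1)$, i.e.\ $i \le g+r+p-q$, matching). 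So the numerology closes up cleanly, and similarly at the lower end with $i_0$.

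The main obstacle I expect is not the homological algebra of the long exact sequences — that is bookkeeping — but establishing the right sheaf-theoretic input that makes the key triangle split $\M_1(G)$ into $\M_1(G')$ and a Tate piece in a way compatible with $\sym$. Concretely: one needs that $\sym$ applied to a short exact sequence $0 \to N' \to N \to L \to 0$ in the abelian category $\ShT(k,\Q)$ (with $L$ a "line", i.e.\ $\sym^2 L = 0$) produces the expected two-step filtration of $\sym^i N$; this is precisely Lemma \ref{skey} of Appendix A, and it requires the exactness of $\tensor$ on $\ShT(k,\Q)$ (Appendix B). Marshalling these appendix results correctly, and checking that the extension class in the key triangle is the nilpotent/unipotent one so that the Tate twist appears with the claimed shift $[1]$ and weight $(1)$, is the delicate part; once that is in hand, parts (1) and (3) are immediate corollaries of (2) by the substitutions above and by listing which of $i < i_0$ or $i > i_1$ forces the whole sum to vanish.
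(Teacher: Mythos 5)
Your skeleton --- decompose via Theorem \ref{mdec}/Corollary \ref{dec}, treat the torus and the abelian variety as base cases, and induct on the rank using the triangle of Lemma \ref{keyt} and its long exact sequences --- is the paper's, and your numerology in the inductive step is correct. But two reductions you rely on fail. First, parts (1) and (3) are \emph{not} obtained from (2) by the substitution $s=2q-p$: part (2) concerns the eigenspaces of the push-forward $n_{G*}$ on motivic homology, part (3) the eigenspaces of the pull-back $n_G^*$ on motivic cohomology, i.e.\ $\HomD(\Q(q)[p],\sym^i(\M_1(G)))$ versus $\HomD(\sym^i(\M_1(G)),\Q(q)[p])$. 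Since $G$ is not proper there is no Poincar\'e-duality substitution converting one into the other (the identification of Remark \ref{rmkab} is available only in the abelian base case), and indeed the two ranges $[\max\{0,q\},\min\{2g+r,g+r+p-q\}]$ and $[\max\{0,q-s\},\min\{2g+r,g+q\}]$ are not interchanged by any substitution. Likewise (1) is not an ``unwinding'' of (2): for case (c), $p-2q+r<0$, the interval $[i_0,i_1]$ can be nonempty (e.g.\ $(g,r,q,p)=(3,0,3,5)$ gives $[i_0,i_1]=[3,5]$), so (2) says nothing about the graded pieces that must vanish; cases (d),(e) are of the same nature, reflecting the parts of Theorem \ref{coh} (negative simplicial degree, codimension $0,1$) that the eigenspace bound does not encode. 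The paper proves (1) and (3) by running the same rank induction separately --- Lemma \ref{t2} for the ungraded groups, the cohomological long exact sequence from Lemma \ref{keyt} for (3) --- and you need these parallel inductions with their own base cases.

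Second, your abelian base case is not proved as stated. Beauville's theorem (Theorem \ref{tbe}) is exactly the $s=0$ instance of what is being generalized, so it cannot give the bound $i\le g+p-q$ (equivalently the lower bound $q-s$ for higher Chow groups with $s>0$), and ``weight constraints'' is not an argument. The missing idea --- the actual content of the paper's abelian case --- is the motivic Weil--Barsotti duality $\M_1(\hat{A})\simeq(\M_1(A))^*(1)[2]$ (Lemma \ref{WBF}), which yields $\sym^i(\M_1(A))\simeq\bigl(\sym^i(\M_1(\hat{A}))\bigr)^*(i)[2i]$ and hence identifies $H_p(A,\Q(q))_{(i)}$ with a direct summand of $\ch^{i-q}(\hat{A},p-2q;\Q)$; the full range then follows from the elementary vanishing of higher Chow groups (Theorem \ref{coh}), with no input from Beauville at all (this in fact reproves Theorem \ref{tbe}). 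You invoke Weil--Barsotti only to ``pin down the twist on $\sym^{2g}$'', which is not where it is needed. Two smaller points: your computation over the point is too strong ($\HomD(\Q(q)[p],\Q(i)[i])\simeq\ch^{i-q}(k,i+p-2q;\Q)$ need not vanish for $i>q$ --- Milnor $K$-theory of $k$ appears --- though the correct statement still gives the claimed range via Theorem \ref{coh}); and both your base case and your inductive step presuppose a split torus part, so one must first reduce to an algebraically closed field, which the paper does by a transfer/continuity argument before starting the induction.
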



\begin{rmk}
By the definition of higher Chow groups and a computation of codimension one cycles (see Bloch \cite{B2}), we know the following result on vanishing of higher Chow groups:

\begin{thm}\label{coh}
For any smooth scheme $X$of dimension $d$ over a field $k$ and for any abelian group $R$, we have $\ch^i(X,n;R)=0$ in the cases{\rm :}

$(1)\ n<0;$ \ $(2)\ i\not \in [0,d+s];$ \ $(3)\ i=0$ and $n\geq1;$ \ $(4)\ i=1$ and $n\geq2.$
\end{thm}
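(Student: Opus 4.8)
The plan is to read all four vanishings off the definition of Bloch's cycle complex $z^i(X,\bullet)$, whose term $z^i(X,n)$ is the free abelian group on the integral closed subschemes of $X\times\Delta^n$ of codimension $i$ meeting every face properly, with differential the alternating sum $\sum_j(-1)^j\partial_j$ of the face maps; here $\ch^i(X,n;R)$ is the $n$-th homology of this complex with $R$-coefficients (for the rational coefficients used throughout the paper this is simply $\ch^i(X,n;\Z)\otimes\Q$). Parts (1) and (2) are then purely formal. The complex is concentrated in degrees $\ge 0$, which gives (1). For (2), $X\times\Delta^n$ has dimension $d+n$, so there is no integral subscheme of codimension $i$ when $i<0$ or $i>d+n$; moreover when $i>d+n$ one has $z^i(X,m)=0$ for every $m\le n$ (because $d+m\le d+n<i$), so the cycle group in degree $n$ is already zero and $H_n=0$.

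For (3) I would compute $z^0(X,\bullet)$ outright. Passing to a connected component we may assume $X$ integral, so $X\times\Delta^n$ is integral of dimension $d+n$ and $z^0(X,n)=\Z\cdot[X\times\Delta^n]$ (the fundamental cycle, which meets each face $X\times\Delta^{n-1}$ in the expected codimension). Each face map sends $[X\times\Delta^n]$ to $[X\times\Delta^{n-1}]$, so the differential $z^0(X,n)\to z^0(X,n-1)$ is multiplication by $\sum_{j=0}^{n}(-1)^j$, equal to $1$ for $n$ even and $0$ for $n$ odd. Hence $z^0(X,\bullet)$ is the complex $\cdots\xrightarrow{\,0\,}\Z\xrightarrow{\,\mathrm{id}\,}\Z\xrightarrow{\,0\,}\Z$ with $\Z$ in degree $0$, whose homology is $\Z$ in degree $0$ and $0$ in every positive degree; tensoring with $R$ gives $\ch^0(X,n;R)=0$ for $n\ge 1$.

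The only part carrying real content is (4), and there I would invoke the standard description of the codimension-one part of the cycle complex. By Theorem~\ref{voe} together with the quasi-isomorphism $\Z(1)\simeq\mathbb{G}_m[-1]$ (equivalently, by Bloch's direct computation in \cite{B2} that $z^1(X,\bullet)$ is quasi-isomorphic to the two-term Gersten complex $\bigl[\,k(X)^\times\to\bigoplus_{x\in X^{(1)}}\Z\,\bigr]$ of Zariski sheaves), one has $\ch^1(X,n;\Z)\cong H^{1-n}_{\mathrm{Zar}}(X,\mathbb{G}_m)$. That Gersten complex is a flasque resolution of $\mathbb{G}_m$ of length $1$ on the small Zariski site of the regular scheme $X$, so $H^{j}_{\mathrm{Zar}}(X,\mathbb{G}_m)=0$ for $j\ge 2$; hence $\ch^1(X,n;\Z)=0$ as soon as $1-n<0$, i.e.\ for $n\ge 2$, and the same holds after $\otimes_\Z R$.

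The hard part — indeed the only non-formal point — is (4): it cannot be seen from the shape of the cycle complex and rests on identifying $z^1(X,\bullet)$ with the Gersten complex of $\mathbb{G}_m$, equivalently on the elementary but genuinely geometric fact that $\mathbb{G}_m$ has Zariski cohomological dimension $\le 1$ on a regular scheme; this is also the only place where smoothness of $X$ is used.
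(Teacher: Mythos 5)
Your route is the same one the paper has in mind: Theorem \ref{coh} is stated there as a recalled fact, justified only by the phrase ``by the definition of higher Chow groups and a computation of codimension one cycles (see Bloch \cite{B2})'', and your (1)--(3) are exactly the formal consequences of the shape of $z^i(X,\bullet)$ while your (4) is exactly Bloch's identification of $z^1(X,\bullet)$ with the Gersten complex $\bigl[k(X)^\times\to\bigoplus_{x\in X^{(1)}}\Z\bigr]$, i.e.\ $\Z(1)\simeq\mathbb{G}_m[-1]$. So in substance you have written out the proof the paper leaves implicit, and parts (1), (2), (3) are complete and correct for arbitrary coefficients, since in those cases the relevant term of the complex $z^i(X,\bullet)\otimes R$ is itself zero (or the complex is computed explicitly, as in your $z^0$ calculation).

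The one genuine gap is the final sentence of (4): ``the same holds after $\otimes_\Z R$''. Homology does not commute with tensoring by a non-flat group, and the universal coefficient sequence produces the term $\mathrm{Tor}_1^{\Z}\bigl(\ch^1(X,1;\Z),R\bigr)=\mathrm{Tor}_1^{\Z}\bigl(\mathcal{O}^\times(X),R\bigr)$ inside $\ch^1(X,2;R)$. This is typically nonzero for torsion $R$: for $X=\mathrm{Spec}\,k$ with $\mu_m(k)\neq 1$ one has $\ch^1(k,2;\Z/m)\cong\mu_m(k)\neq 0$ (equivalently $H^0(k,\Z/m(1))=\mu_m(k)$). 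So case (4) of the statement, as phrased ``for any abelian group $R$'', is actually false for torsion coefficients, and your tensoring step cannot be repaired in that generality; what is true, and what your argument does prove, is the integral statement and hence the statement for flat $R$ (in particular $R=\Q$, which is the only case the paper ever uses, so nothing downstream is affected). If you want to keep the general-coefficient phrasing for (1)--(3), argue as you did directly on the complex $z^i(X,\bullet)\otimes R$; for (4) restrict to $\Z$ or flat $R$, or replace the tensoring argument by the universal coefficient sequence and note the extra $\mathrm{Tor}$ term explicitly.
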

\end{rmk}

\begin{rmk}\label{rmkab}
For an abelian variety $A$, we know that for any $p,q$ and $0\leq i\leq 2g$,
\[
H_p(A,\Q(q))_{(i)}\simeq H^{2g-p}(A,\Q(g-q))^{(2g-i)}.
\]
Therefore Theorem \ref{vmh}\,(1) is the same as Theorem \ref{coh}, and Theorem \ref{vmh}(2) and (3) are the same.
\end{rmk}

The following corollary immediately follows from Corollary \ref{dec} and Theorem \ref{vmh}.
\begin{cor}[Theorem \ref{mth}]\label{decm}
Let notation be as in Theorem \ref{vmh}.
Then the decompositions in Corollary \ref{dec} are
\[
H_p(G,\Q(q))=\bigoplus_{i=i_0}^{i_1}H_p(G,\Q(q))_{(i)},
\quad
\ch^q(G,s;\Q)=\bigoplus_{i=j_0}^{j_1}\ch^q(G,s;\Q)^{(i)}.
\]
\end{cor}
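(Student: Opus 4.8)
The plan is to combine Corollary~\ref{dec} with the vanishing statement of Theorem~\ref{vmh}: once those two inputs are in hand, the corollary is pure bookkeeping. Concretely, Corollary~\ref{dec} already produces the direct sum decomposition $H_p(G,\Q(q)) = \bigoplus_{i=0}^{2g+r} H_p(G,\Q(q))_{(i)}$, coming from the isomorphism $\varphi_G$ of Theorem~\ref{mdec} together with the identification $H_p(G,\Q(q))_{(i)} = \HomD(\Q(q)[p], \sym^i(\M_1(G)))$ and the fact that $n_G$ acts on $\sym^i(\M_1(G))$ by $n^i$. I would then invoke Theorem~\ref{vmh}(2): the summand $H_p(G,\Q(q))_{(i)}$ vanishes whenever $i \notin [i_0,i_1]$, where $i_0 = \max\{0,q\}$ and $i_1 = \min\{2g+r,\,g+r+p-q\}$. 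Since $i_0 \geq 0$ and $i_1 \leq 2g+r$ by the very form of these endpoints, the interval $[i_0,i_1]$ sits inside $[0,2g+r]$, so deleting the vanishing summands from the decomposition of Corollary~\ref{dec} leaves exactly $H_p(G,\Q(q)) = \bigoplus_{i=i_0}^{i_1} H_p(G,\Q(q))_{(i)}$ (an empty sum, hence $0$, in the degenerate range where $i_0 > i_1$, consistent with Theorem~\ref{vmh}(1)). Naturality of the decomposition is inherited from the naturality of $\varphi_G$.

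For the higher Chow groups I would translate via Voevodsky's comparison (Theorem~\ref{voe}): $\ch^q(G,s;\Q) \cong H^{2q-s}(G,\Q(q))$, an isomorphism compatible with the maps induced by $n_G$, hence identifying $\ch^q(G,s;\Q)^{(i)}$ with $H^{2q-s}(G,\Q(q))^{(i)}$. Applying the cohomological half of Corollary~\ref{dec} and then Theorem~\ref{vmh}(3)—which gives $\ch^q(G,s;\Q)^{(i)} = 0$ for $i \notin [j_0,j_1]$ with $j_0 = \max\{0,q-s\} \geq 0$ and $j_1 = \min\{2g+r,\,g+q\} \leq 2g+r$—yields $\ch^q(G,s;\Q) = \bigoplus_{i=j_0}^{j_1} \ch^q(G,s;\Q)^{(i)}$ in the same manner, the containment $[j_0,j_1] \subseteq [0,2g+r]$ again being automatic.

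There is no genuine obstacle: the whole mathematical content lives in Theorem~\ref{mdec} (via Corollary~\ref{dec}) and in Theorem~\ref{vmh}, and what remains is the trivial verification that the lower endpoints $i_0, j_0$ are $\geq 0$ and the upper endpoints $i_1, j_1$ are $\leq 2g+r$, which holds because each is by definition a max with $0$ or a min with $2g+r$. If anything deserves a word of care it is only the compatibility of the Voevodsky isomorphism of Theorem~\ref{voe} with the operations $n_G^\ast$, so that it respects the eigenspace decomposition; this is immediate from the functoriality of that isomorphism.
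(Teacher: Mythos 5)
Your argument is exactly the paper's: the author states that Corollary \ref{decm} follows immediately from Corollary \ref{dec} together with the vanishing in Theorem \ref{vmh}, which is precisely the bookkeeping you carry out (including the routine identification of $\ch^q(G,s;\Q)^{(i)}$ with the eigenspaces of motivic cohomology via Theorem \ref{voe}). The proposal is correct and adds only the explicit, harmless verification that $[i_0,i_1]$ and $[j_0,j_1]$ lie inside $[0,2g+r]$.
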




\subsection{Proof of main result}
Let $G$ be a semiabelian variety over $k$, which is an extension of an abelian variety of dimension $g>0$ by a torus of rank $r$.

\subsubsection{Reduce to the case of an algebraically closed base field}
Let $L/k$ be a finite extension.
Let $G_L$ denote the scaler extension $G\tensor _k L$ and let $f_{L/k}: G_L \to G$ the projection.
Then the composition of the following homomorphism is the multiplication by the degree $[L:k]$:
\[
H_p(G,\Q(q))\stackrel{f_{L/k}^*}{\longrightarrow} H_p(G_L,\Q(q))\stackrel{{f_{L/k}}_*}{\longrightarrow} H_p(G,\Q(q)).
\]
Hence we obtain that the push- forward map ${f_{L/k}}^*$ is injective, and that there is a natural injection
\[
H_p(G,\Q(q)) \hookrightarrow \mathop{\rm colim}_{L/k} H_p(G_L,\Q(q)).
\]
By a result of a continuity of $\DM$ in the sense of Cisinski-D\'eglise \cite[\S 4.3, Proposition 4.3.4, Theorem 11.1.24]{CD}, we have a bijection
\[
\mathop{\rm colim}_{L/k} H_p(G_L,\Q(q)) \simeq H_p(G_{\bar{k}},\Q(q)).
\]
Here $\bar{k}$ denotes an algebraic closure of $k$.
Thus we have an injection
\[
H_p(G,\Q(q)) \hookrightarrow H_p(G_{\bar{k}},\Q(q)).
\]
Note that a similar assertion holds for $H_p(G,\Q(q))_{(i)}$, $\ch^q(G,s;\Q)$ and $\ch^q(G,s;\Q)^{(i)}$.
Therefore we may assume that $k$ is algebraic closed.

\subsubsection{In case of a torus}
Let $G=T$ be a torus of rank $r$.
By the above argument, we may assume that $T=\mb{G}_m^r$.
In this case, we know that
\[
\M(\mb{G}_m^r)\simeq \bigoplus_{i=0}^r \big(\Q(i)[i]\bigr)^{\oplus c_i},
\]
where $c_i=\binom{r}{i}$ denotes the binomial coefficient.
Thus, by isomorphism \ref{iso}, we obtain that for $0\leq i \leq r$,
\begin{align}
H_p(T,\Q(q))_{(i)}\simeq \ch^{i-q}(k,i+p-2q;\Q)^{\oplus c_i}.
\end{align}
From this equation and Theorem \ref{coh}, one can easily obtain the following proposition which induces Theorem \ref{vmh} for a torus.

\begin{prop}
Let $T$ be the torus $\mb{G}_m^r$ of rank $r$.
Let $p,q$ be integers.
Let $i$ be an integer with $0\leq i \leq r$.
Then $H_p(T,\Q(q))_{(i)}$ vanishes in the following cases{\rm :}
$(1)\ q>p;$ \ $(2)\ q>i;$
\ $(3)\ i<2q-p;$
\ $(4)\ q=i$ and $p-2q+i\geq 1;$
\ $(5)\ q=i-1$ and $p-2q+i \geq 2.$
\end{prop}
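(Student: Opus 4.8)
The plan is to reduce the five vanishing assertions to vanishing statements for the motivic cohomology of the base field, that is, to Theorem~\ref{coh} applied to $X=\mathrm{Spec}\,k$ (of dimension $d=0$), through the isomorphism
\[
H_p(T,\Q(q))_{(i)}\;\simeq\;\ch^{i-q}(k,\,i+p-2q;\,\Q)^{\oplus c_i}
\]
recorded just above. Granting this, what remains is a purely combinatorial matching of the triples $(p,q,i)$ appearing in conditions (1)--(5) with the four cases of Theorem~\ref{coh}, and I expect the only thing to watch out for is the index bookkeeping.

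First I would re-derive the displayed isomorphism, which carries all of the geometric input. By the reduction to an algebraically closed base field made above we may take $T=\mb{G}_m^r$, and since $\M$ is monoidal with $\M(\mb{G}_m)\simeq\Q\oplus\Q(1)[1]$ we get $\M(\mb{G}_m^r)\simeq\bigoplus_{i=0}^{r}(\Q(i)[i])^{\oplus c_i}$ with $c_i=\binom{r}{i}$. Under $\varphi_T$ the summand $(\Q(i)[i])^{\oplus c_i}$ is identified with $\sym^i(\M_1(T))$, on which $n_T$ acts as $n^i\cdot\mathrm{id}$ (exactly as in the proof of Corollary~\ref{dec}), so
\[
H_p(T,\Q(q))_{(i)}=\HomD\!\bigl(\Q(q)[p],\sym^i(\M_1(T))\bigr)\simeq\HomD\!\bigl(\Q(q)[p],\Q(i)[i]\bigr)^{\oplus c_i}.
\]
Cancelling Tate twists and invoking Theorem~\ref{voe},
\[
\HomD\!\bigl(\Q(q)[p],\Q(i)[i]\bigr)\simeq H^{\,i-p}(\mathrm{Spec}\,k,\Q(i-q))\simeq\ch^{\,i-q}(k,\,i+p-2q;\,\Q),
\]
which is the asserted formula.

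Finally I would run through the five cases. Set $a:=i-q$ and $n:=i+p-2q$, so that the task is to show $\ch^{a}(k,n;\Q)=0$ in each case; as $\dim\mathrm{Spec}\,k=0$, Theorem~\ref{coh} gives this whenever $n<0$, or $a\notin[0,n]$, or $a=0$ with $n\ge1$, or $a=1$ with $n\ge2$. Since $a-n=q-p$, case~(1) ($q>p$) gives $a>n$, hence Theorem~\ref{coh}(2); case~(2) ($q>i$) gives $a<0$, again Theorem~\ref{coh}(2); case~(3) ($i<2q-p$) is precisely $n<0$, Theorem~\ref{coh}(1); case~(4) ($q=i$ and $p-2q+i\ge1$) reads $a=0$ and $n\ge1$, Theorem~\ref{coh}(3); and case~(5) ($q=i-1$ and $p-2q+i\ge2$) reads $a=1$ and $n\ge2$, Theorem~\ref{coh}(4). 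This exhausts the list. The one point requiring care throughout is consistency between the two gradings involved — shifts and twists of $\mathrm{Hom}$-groups versus the $(\text{codim},\text{simplicial degree})$-indexing of higher Chow groups, in particular the identity $2(i-q)-(i-p)=i+p-2q$ — but there is no substantive difficulty.
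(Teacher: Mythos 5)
Your proof is correct and follows the paper's own route: reduce to $T=\mb{G}_m^r$, use the decomposition of $\M(\mb{G}_m^r)$ (equivalently $\sym^i(\M_1(T))\simeq(\Q(i)[i])^{\oplus c_i}$) to get $H_p(T,\Q(q))_{(i)}\simeq\ch^{i-q}(k,i+p-2q;\Q)^{\oplus c_i}$, and then apply Theorem \ref{coh} case by case. Your index bookkeeping in the five cases is exactly the check the paper leaves to the reader, and it is accurate.
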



\subsubsection{In case of abelian varieties}
Let $G$ be an abelian variety $A$ of dimension $g$ over an algebraically closed field $k$.
By Remark \ref{rmkab}, it enough to prove Theorem \ref{vmh}\,(2).

\begin{lem}\label{WBF}
Let $\hat{A}$ be the dual abelian variety of $A$.
There is an isomorphism in $\DM$
\[
\M_1(\hat{A})\simeq (\M_1(A))^*(1)[2].
\]
Here $*$ denotes the dual in $\DM$.
\end{lem}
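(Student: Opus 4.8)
The statement to prove is the Weil--Barsotti formula expressed motivically: $\M_1(\hat{A}) \simeq (\M_1(A))^*(1)[2]$. The plan is to reduce the claim to the classical Weil--Barsotti isomorphism $\hat{A} \cong \mathrm{Ext}^1_{k}(A, \mb{G}_m)$ combined with the known structure of the Chow motive of an abelian variety. First I would recall that, for an abelian variety $A$, the motive $\M_1(A)$ corresponds (under the embedding of Chow motives into $\DM$) to $h^1(A)$, the first Kimura component of $\M(A) = h(A)$, and that $\M(A) \simeq \sym(\M_1(A))$ by Theorem~\ref{mdec}. The top symmetric power $\sym^{2g}(\M_1(A)) \simeq \Q(g)[2g]$ by Proposition~\ref{vm} (the torus part is trivial, so $\Lambda = \Q$), which furnishes a perfect pairing
\[
\M_1(A) \tensor \sym^{2g-1}(\M_1(A)) \longrightarrow \sym^{2g}(\M_1(A)) \simeq \Q(g)[2g],
\]
and more classically the Poincar\'e duality for $\M(A)$ already identifies $\M_1(A)^* \simeq \M_1(A)(-1)[-2]$ up to the choice of polarization, so that $(\M_1(A))^*(1)[2] \simeq \M_1(A)$ noncanonically. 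This shows the two sides are abstractly isomorphic; the content is the \emph{natural} identification via the dual abelian variety.

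The key step is therefore to produce a canonical morphism $\M_1(\hat{A}) \to (\M_1(A))^*(1)[2]$ and check it is an isomorphism. For this I would use the biextension of $(A, \hat{A})$ by $\mb{G}_m$ (the Poincar\'e biextension), which is the geometric incarnation of the Weil--Barsotti formula. Applying $\M_1(-)$, the biextension gives a pairing
\[
\M_1(A) \tensor \M_1(\hat{A}) \longrightarrow \M_1(\mb{G}_m) \simeq \Q(1)[1][1] = \Q(1)[2]
\]
in $\DM$ (using that $\M_1(\mb{G}_m)$ is the reduced motive $\widetilde{\mb{G}_m}$, which is $\Q(1)[1]$ placed in the appropriate degree; one must be careful about the shift, since $\M_1$ of a torus contributes $\Q(1)[1]$). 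Adjoint to this pairing is a map $\M_1(\hat{A}) \to \mathrm{Hom}_{\DM}(\M_1(A), \Q(1)[2]) = (\M_1(A))^*(1)[2]$. To see this is an isomorphism, I would pass to a Weil cohomology realization (say $\ell$-adic), where the pairing becomes the classical Weil pairing $H^1(A) \times H^1(\hat{A}) \to H^2(\mb{G}_m) = \Q_\ell(-1)$, which is perfect; since $\M_1(A)$ and $\M_1(\hat{A})$ lie in the full subcategory of $\DM$ generated by $h^1$ of abelian varieties, where a morphism is an isomorphism iff it is so after realization (conservativity — or more elementarily, using that the category of such Chow motives embeds faithfully and one can check on Chow groups / use Kimura finiteness), the map is an isomorphism in $\DM$.

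The main obstacle I anticipate is bookkeeping the Tate twists and shifts correctly: $\M_1(G)$ is defined as a sheaf in degree zero, not as a shift of the reduced motive, so one must track how $\widetilde{\mb{G}_m}$ relates to $\Q(1)[1]$, and whether the Poincar\'e biextension pairing lands in $\Q(1)[1]$ or $\Q(1)[2]$ — the shift $[2]$ in the statement is exactly what makes this delicate. A secondary subtlety is ensuring the pairing coming from the biextension agrees (up to sign) with the one from the ring structure $\sym(\M_1(A))$ and Poincar\'e duality, so that the resulting isomorphism is the ``expected'' one; but for the purposes of Theorem~\ref{vmh} in the abelian case only the existence of \emph{some} such isomorphism is needed, which softens this point. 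Alternatively, one can bypass biextensions entirely: deduce the isomorphism directly from Poincar\'e duality $\M(A)^\vee \simeq \M(A)(-g)[-2g]$ together with the compatibility of $\varphi_A$ with the Hopf-algebra (in particular coalgebra) structure, reading off the $h^1$-component and then invoking $A \cong \hat{\hat{A}}$ and the symmetry of the setup to pin down the twist; this is probably the cleanest route and is the one I would write up.
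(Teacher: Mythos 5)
Your proposal does not follow the paper's route: the paper's proof is a two-line citation, combining the Weil--Barsotti formula $\hat{A}\simeq \underline{\mathrm{Ext}}^1(A,\mb{G}_m)$ with the theorem of Barbieri-Viale--Kahn \cite[\S 4]{BK} that Cartier duality of $1$-motives corresponds to the motivic dual $(-)^*(1)[2]$ in $\DM$; all the real content of the lemma is delegated to that reference. Measured against this, your first route (Poincar\'e biextension plus realization) is close in spirit to the cited result, but the step ``applying $\M_1(-)$, the biextension gives a pairing'' is exactly the nontrivial construction: the Poincar\'e biextension is not a homomorphism of semiabelian varieties, so one cannot apply $\M_1$ to it; turning it into a morphism $\M_1(A)\tensor\M_1(\hat{A})\to \Q(1)[2]$ in $\DM$ is what the Barbieri-Viale--Kahn comparison (or an equivalent construction) supplies. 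Your shift bookkeeping is also off as written: $\M_1(\mb{G}_m)\simeq \Q(1)[1]$, not $\Q(1)[2]$; the extra $[1]$ must come from the biextension being an extension class, i.e.\ again from the construction you leave open. The conservativity argument (check after $\ell$-adic realization, using Kimura finiteness and nilpotence for abelian motives) can be made rigorous, but without the pairing in hand the proof is incomplete.

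The more serious gap is in the route you say you would actually write up. Poincar\'e duality $\M(A)^*\simeq \M(A)(-g)[-2g]$, made equivariant for multiplication by $n$ (where $n_*$ and $n^*$ are adjoint and $n^*n_*=n^{2g}$), only yields $\bigl(\sym^i\M_1(A)\bigr)^*\simeq \sym^{2g-i}\bigl(\M_1(A)\bigr)(-g)[-2g]$. For $i=1$ this says $\M_1(A)^*(1)[2]\simeq \sym^{2g-1}\bigl(\M_1(A)\bigr)(1-g)[2-2g]$, and if you try to identify the right-hand side with $\M_1(A)$ (or with $\M_1(\hat{A})$) by applying the same duality at $i=2g-1$, you get back the identity you started from: the argument is circular. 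No Hopf-algebra formalism, biduality $A\simeq\hat{\hat{A}}$, or ``symmetry of the setup'' can produce the dual abelian variety from $\M(A)$ alone; the indispensable input is the Poincar\'e bundle/biextension (or a polarization together with a proof that the induced pairing on $\M_1$ is perfect, which again is not formal), i.e.\ precisely the Weil--Barsotti-type statement the paper imports. Even the ``abstract'' identification $\M_1(A)^*(1)[2]\simeq\M_1(A)$ up to a polarization, which you treat as classical background, is equivalent to the lemma after choosing an isogeny $A\to\hat{A}$, so it cannot be used as an ingredient. The efficient repair is the paper's: quote Weil--Barsotti and the comparison of Cartier duality with motivic duality from \cite{BK}; alternatively, carry out your biextension construction and the perfectness check in detail, which amounts to reproving that cited result.
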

\begin{proof}
By Weil-Barsotti formula, we have an isomorphism
\[
\hat{A} \simeq \underline{\mathrm{Ext}}^1(A,\mb{G}_m).
\]
The assertion follows from a result of Barbieri-Viale--Kahn (\cite[\S 4]{BK}) on a comparison between Cartier dual of 1-motives and motivic dual of corresponding motives.
\end{proof}

\renewcommand{\proofname}{\bf Proof of Theorem \ref{vmh}\,(2)}
\begin{proof}
By Lemma \ref{WBF} and replacing $(A,\hat{A})$ by $(\hat{A},A)$, for an integer $i\geq0$, we have an isomorphism
\[
\sym^i(\M_1(A))\simeq \bigl(\sym^i(\M_1(\hat{A}))\bigr)^*(i)[2i].
\]
From this isomorphism, we have
\begin{align*}
H_p(A,\Q(q))_{(i)}
&=\HomD(\Q(q)[p],\sym^i(\M_1(A)))\\
&=\HomD(\Q(q)[p],(\sym^i(\M_1(\hat{A})))^*(i)[2i])\\
&=\HomD(\sym^i(\M_1(A)),\Q(i-q)[2i-p])\\
&=\ch^{i-q}(\hat{A},p-2q;\Q)^{(i)}\\
&\subset \ch^{i-q}(\hat{A},p-2q;\Q).
\end{align*}
From Theorem \ref{coh}, $\ch^{i-q}(\hat{A},p-2q;\Q)=0$ for $i-q \not \in [0, g+p-2q]$.
Thus $H_p(A,\Q(q))_{(i)}=0$ for $i \not \in [i_0,i_1]$, since $\sym^i(\M_1(A))=0$ for $i \not\in [0,2g]$ by Proposition \ref{vm}.
\end{proof}
\renewcommand{\proofname}{\bf Proof}



\subsubsection{In general case}

We prove Theorem \ref{vmh} by induction on rank of a semiabelian variety $G$.
Following \cite{EW}, we first give some triangles in $\DM$ which allows us to use the induction on the rank of a semiabelian variety.


We first give an exact sequence of  smooth group schemes.
Let $G$ be a semiabelian variety of rank $r$ over an algebraically closed field $k$.
Then there is an exact sequence of smooth group schemes over $k$ of the following form:
\[
1\to \mb{G}_m^r \stackrel{i}{\to} G \stackrel{f}{\to} A\to 1.
\]
Then, following \cite[Section 5.2]{EW}, we consider a cokernel $B$ of the composite map
\[
\mb{G}_m \to \mb{G}_m^r \stackrel{i}{\to} G,
\]
where the first map is the inclusion to the first factor of $\mb{G}_m^r$.
Then $B$ is a semiabelian variety of rank $r-1$ over $k$ which fits into
\begin{align}
1\to \mb{G}_m \to G \to B\to 1,\label{esg}\\
1\to \mb{G}_m^{r-1} \stackrel{i}{\to} B \stackrel{\bar{f}}{\to} A\to 1. \notag
\end{align}

\begin{lem}[\text{\cite{AEH,EW}}]\label{t2}
Let notation as above.
Then we have the following exact triangles in $\DM${\rm :}
\begin{align*}
&\M(B)(1)[1]\to\M(G)\to\M(B)\to\M(B)(1)[2].
\end{align*}
\end{lem}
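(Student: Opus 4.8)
The plan is to deduce the triangle from the exact sequence of smooth group schemes $1\to \mb{G}_m \to G \to B\to 1$ in \eqref{esg}. Since $\mb{G}_m$ acts freely on $G$ with quotient $B$, the map $f\colon G\to B$ is a $\mb{G}_m$-torsor (Zariski-locally trivial, as every line bundle is). The standard approach is to use the Gysin/projective bundle machinery: a $\mb{G}_m$-torsor over $B$ is the complement of the zero section in the associated line bundle $L\to B$, and it is also the complement of the section at infinity in the associated $\mb{P}^1$-bundle $\mb{P}(L\oplus \ml{O}_B)\to B$. Concretely, I would first recall that in $\DM$ one has the Gysin triangle for the closed embedding of the zero section $B\hookrightarrow L$ with open complement $G$:
\[
\M(G)\to \M(L)\to \M(B)(1)[2]\to \M(G)[1],
\]
and that the projection $L\to B$ is an $\mb{A}^1$-bundle, hence $\M(L)\simeq \M(B)$ by homotopy invariance. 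Rotating this triangle gives
\[
\M(B)(1)[1]\to \M(G)\to \M(B)\to \M(B)(1)[2],
\]
which is exactly the claimed triangle.

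The key steps, in order, are: (i) observe that $f\colon G\to B$ is a Zariski-locally trivial $\mb{G}_m$-torsor, coming from the line bundle $L$ on $B$ whose class corresponds to the extension class of \eqref{esg} in $\mathrm{Pic}(B)=\mathrm{Ext}^1(B,\mb{G}_m)$ (or rather the relevant piece of it); (ii) identify $G$ with $L\setminus s_0(B)$, the complement of the zero section; (iii) invoke the Gysin triangle in $\DM$ for the smooth pair $(L, B)$ (codimension one, so the Tate twist/shift is $(1)[2]$), as in Voevodsky's \emph{Triangulated categories of motives}; (iv) apply homotopy invariance to replace $\M(L)$ by $\M(B)$; (v) rotate. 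One should also check that the map $\M(G)\to\M(B)$ appearing after rotation is indeed $\M(f)$, which follows because the Gysin triangle's first map is induced by the open immersion $G\hookrightarrow L$ composed with the $\mb{A}^1$-equivalence $L\to B$, and this composite is $f$.

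I expect the main obstacle to be purely a matter of bookkeeping rather than mathematical depth: one must make sure the $\mb{G}_m$-torsor $G\to B$ really is the one attached to a genuine line bundle (Zariski-local triviality is automatic over a scheme since $H^1_{\mathrm{Zar}}(B,\mb{G}_m)=H^1_{\mathrm{et}}(B,\mb{G}_m)=\mathrm{Pic}(B)$), and that the Gysin triangle is available in the possibly non-split or non-perfect-field generality in which we work — but by the reductions already in place we may assume $k$ algebraically closed, and $B$ is smooth, so Voevodsky's Gysin triangle applies directly. An alternative, if one prefers to avoid Gysin triangles, is to cite \cite{EW} or \cite{AEH} directly, where this triangle is established; indeed the statement is attributed to those sources, so the cleanest exposition is simply to recall their argument: present the $\mb{G}_m$-torsor description and the projective bundle formula $\M(\mb{P}(L\oplus\ml{O}_B))\simeq \M(B)\oplus\M(B)(1)[2]$, then split off the fibre at infinity to land on $\M(G)$. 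Either route is short; the substance is the torsor identification in step (i)–(ii).
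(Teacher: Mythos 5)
Your argument is exactly the paper's own proof: view $G\to B$ as the complement of the zero section in the line bundle associated to the $\mb{G}_m$-torsor, apply the Gysin triangle for that codimension-one smooth pair, use homotopy invariance to identify $\M(E)$ with $\M(B)$, and rotate. No meaningful differences, and the proposal is correct.
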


\begin{proof}
From \eqref{esg}, we may regard $G$ as a $\mb{G}_m$-torsor on $B$.
Let $E$ be the line bundle over $B$ associated to the $\mb{G}_m$-torsor $G$.
Let $s: B \to E$ be the zero section.
By Gysin triangle attache to $E$ and $s(B)$, we have an exact triangle in $\DM$
\[
\M(E- s(B))\to \M(E)\to \M(s(B))(1)[2] \to \M(E- s(B))[1]
\]
Since $E-s(B)$ is isomorphic to $G$ and $\M(E)\simeq \M(B)\simeq \M(s(B))$, we get the desired triangle after shifting.
\end{proof}

\begin{lem}[\text{\cite{AEH,EW}}]\label{keyt}
Let notation as above.
Then we have the following exact triangles in $\DM${\rm :}
\begin{align*}
\sym^{n-1}(\M_1(B))(1)[1]\to\sym^n(\M_1(G))\to\sym^n(\M_1(B))\to\sym^{n-1}(\M_1(B))(1)[2].
\end{align*}
\end{lem}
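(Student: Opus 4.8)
The plan is to build the desired triangle by applying symmetric-power functoriality to the triangle of Lemma \ref{t2}, using the fact that $\sym^n$ of a distinguished triangle in a $\Q$-linear idempotent-complete tensor triangulated category splits into a filtration whose graded pieces are the tensor products of symmetric powers of the two outer terms. Concretely, set $M := \M_1(G)$ and $N := \M_1(B)$, and recall that $\M(G) \simeq \sym(M)$, $\M(B) \simeq \sym(N)$ by Theorem \ref{mdec}. Applying the isomorphism $\varphi$ to the triangle
\[
\M(B)(1)[1]\to\M(G)\to\M(B)\to\M(B)(1)[2]
\]
of Lemma \ref{t2} rewrites it as a triangle $\sym(N)(1)[1]\to\sym(M)\to\sym(N)\to\sym(N)(1)[2]$. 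The first step is to observe that the Hopf-algebra compatibility \eqref{comp} forces this triangle to be compatible with the coproduct, hence to decompose, summand by summand in the grading, into triangles
\[
\bigl(\sym(N)(1)[1]\bigr)_{n}\to\sym^n(M)\to\sym^n(N)\to\bigl(\sym(N)(1)[1]\bigr)_{n}[1].
\]
So the crux is to identify the "weight-$n$ part" of $\sym(N)(1)[1]$ — equivalently, the weight-$n$ part of $\sym\bigl(N \oplus \Q(1)[1]\bigr)$ lying in the ideal generated by the $\Q(1)[1]$-factor — and to see it equals $\sym^{n-1}(N)(1)[1]$.

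The key input here is the general lemma about symmetric powers in abelian (or additive idempotent-complete $\Q$-linear) tensor categories that the paper isolates as Lemma \ref{skey} in Appendix A: given a "short exact sequence" $0 \to L' \to L \to L'' \to 0$ (here realized by the triangle, with $L' = \Q(1)[1]$, $L = M$, $L'' = N$, or rather its Tannakian/abelianized shadow), one has a natural finite filtration on $\sym^n(L)$ with graded pieces $\sym^i(L') \otimes \sym^{n-i}(L'')$. Since $L' = \Q(1)[1]$ is $\tensor$-invertible of the shape $\Q(1)[1]$, we have $\sym^0(L') = \Q$, $\sym^1(L') = \Q(1)[1]$, and $\sym^i(L') = 0$ for $i \geq 2$ (a line has no higher symmetric powers — this is exactly where the rank drops). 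Hence the filtration has only two nonzero steps: $\sym^n(N)$ (from $i=0$) and $\sym^{n-1}(N)(1)[1]$ (from $i=1$), and the associated extension is precisely the asserted triangle. The second step of the proof, then, is: (i) verify that Lemma \ref{skey} applies to the triangle of Lemma \ref{t2}; this needs the category $\ShT(k,\Q)$ of Nisnevich sheaves with transfers to be abelian with an exact tensor product, and that $\M_1(G), \M_1(B), \Q(1)[1]$ sit as an actual short exact sequence of such sheaves (shifted) — both of which the paper arranges in Appendix B and via the sheaf-level presentation of $\widetilde{G}$, $\widetilde{B}$, $\mathbb{G}_m$; (ii) transport the resulting filtration back into $\DM$ along the embedding of homotopy-invariant sheaves; (iii) read off the triangle from the two-term filtration.

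The main obstacle is step (i): making rigorous the passage between the distinguished triangle of Lemma \ref{t2} in the triangulated category $\DM$ and a genuine short exact sequence in an abelian category to which the symmetric-power filtration lemma \ref{skey} applies. Triangulated categories do not have kernels or a well-behaved notion of $\sym^n$ of a triangle, so the argument must instead be run at the level of the sheaves $\widetilde{G}, \widetilde{B}$ (which do form $1 \to \mathbb{G}_m \to \widetilde G \to \widetilde B \to 1$, an exact sequence in $\ShT(k,\Q)$ by \eqref{esg}), apply Lemma \ref{skey} there to get the filtration on $\sym^n(\widetilde G)$, and only afterwards compare $\sym^n(\widetilde G)$ computed in $\ShT(k,\Q)$ with $\sym^n(\M_1(G))$ computed in $\DM$ — which is legitimate because symmetric powers are defined by the idempotent $s^n$ and the tensor functor $\ShT(k,\Q) \to \DM$ is monoidal. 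Getting the Tate twist $\Q(1)[1]$ to appear correctly — i.e. recognizing $\mathbb{G}_m$'s contribution as $\Q(1)[1]$ rather than just $\mathbb{G}_m$ — uses the standard identification $\M(\mathbb{G}_m) \simeq \Q \oplus \Q(1)[1]$, and the shift $[1]$ versus $[2]$ bookkeeping at the end is exactly the shift already seen in Lemma \ref{t2}; these are routine once the filtration is in hand.
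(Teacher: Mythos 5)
Your final argument --- running the symmetric-power filtration of Lemma \ref{skey} on the exact sequence $0\to\widetilde{\mb{G}}_m\to\widetilde{G}\to\widetilde{B}\to 0$ in $\ShT(k,\Q)$ (with Proposition \ref{flat} supplying the exactness of $\tensor$ needed to apply it), transporting the filtration into $\DM$ via Corollary \ref{fm} and monoidality, and observing that only the graded pieces with $i=0,1$ survive --- is exactly the paper's proof of Lemma \ref{keyt}. The paper does not use Lemma \ref{t2} at this point at all, so your opening route through the Gysin triangle and the Hopf-algebra compatibility \eqref{comp} (claiming the triangle ``decomposes summand by summand in the grading'') is both unjustified and unnecessary; you correctly supersede it by the sheaf-level argument, which is the actual proof.

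The one genuine flaw is your justification of the crucial vanishing $\sym^i(\Q(1)[1])=0$ for $i\geq 2$: it is not true that ``a line has no higher symmetric powers''. For an invertible object with trivial (even) symmetry constraint --- e.g.\ the Tate object $\Q(1)$ itself --- one has $\sym^i L=L^{\tensor i}\neq 0$ for all $i$; if $\M_1(\mb{G}_m)$ behaved like such a line, the filtration of Lemma \ref{skey} would have $n+1$ nonvanishing graded pieces and the asserted two-term triangle would be false. The vanishing is a parity effect of the shift: by the Koszul sign rule the object $\Q(1)[1]$ is odd, so its symmetric powers become alternating powers of $\Q(1)$, and the paper invokes precisely this, namely $\sym^n(\Q(1)[1])\simeq {\rm Alt}^n(\Q)(n)[n]=0$ for $n\geq 2$ by \cite[Proposition 2.3]{Bi}. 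With that justification substituted for your parenthetical, your proposal coincides with the paper's argument.
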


\begin{proof}
By Corollary \ref{fm}, there is an isomorphism in $\DM$
\[
\sym^n(\M_1(G))\simeq [\sym^n(\widetilde{G})]_0. 
\]
Hence we may work in $\ShT(k,\Q)$ and we have an exact sequence in $\ShT(k,\Q)$:
\[
1\to \widetilde{\mb{G}}_m \to \widetilde{G} \to \widetilde{B} \to 1.
\]
Now we apply Lemma \ref{skey} to the above exact sequence in $\ShT(k,\Q)$.
Note that Proposition \ref{flat} allows us to apply Lemma \ref{skey} to the category $\ShT(k,\Q)$.
Then we have exact triangles for $i\geq 0$,
\begin{align*}
[\sym^n_{i+1}(\widetilde{G})]_0\to[\sym^n_i(\widetilde{G})]_0\to\sym^i(\M_1(\mb{G}_m))\tensor \sym^{n-i}(\M_1(B))\to[\sym^n_{i+1}(\widetilde{G})]_0[1].
\end{align*}
We know that $\M_1(\mb{G}_m)\simeq [\widetilde{\mb{G}}_m]_0\simeq \Q(1)[1]$ in $\DM$.
Therefore, by \cite[Proposition 2.3]{Bi}, $\sym^n(\M_1(\mb{G}_m))\simeq{\rm Alt}^n(\Q)(n)[n]=0$ for $n\geq2$ in $\DM$.
Thus we have
\[
[\sym^n_i(\widetilde{G})]_0
\simeq\begin{cases}
\sym^n(\M_1(G))&\text{if} \ i=0,\\
\sym^{n-1}(\M_1(B))(1)[1]&\text{if} \ i=1,\\
0& \text{otherwise.}
\end{cases}
\]
Hence we have the desired triangle.
\end{proof}



\renewcommand{\proofname}{\bf Proof of Theorem \ref{vmh}}
\begin{proof}
By Lemma \ref{t2}, we have an exact sequence
\[
\cdots\to H_{p-1}(B,\Q(q-1))\to H_p(G,\Q(q))\to H_p(B,\Q(q))\to\cdots.
\]
By Lemma \ref{keyt}, we also have exact sequences
\[
\cdots\to H_{p-1}(B,\Q(q-1))_{(i-1)}\to H_p(G,\Q(q))_{(i)}\to H_p(B,\Q(q))_{(i)}\to\cdots,
\]
\[
\cdots\leftarrow \ch^{q-1}(B,2q-p-1;\Q)^{(i-1)}\leftarrow \ch^q(G,2q-p;\Q)^{(i)}\leftarrow \ch^q(B,2q-p;\Q)^{(i)}\leftarrow\cdots.
\]
By these exact sequence, the assertion for a semiabelian variety $G$ follows from the induction hypothesis and the assertion for abelian varieties.
\end{proof}

\renewcommand{\proofname}{\bf Proof}

%
%

\section{0-th Suslin homology}
We here consider 0-th Susline homology $\h_0(G,\Z)$ of a semiabelian variety $G$, and proved Corollary \ref{mthm}(Corollary \ref{shom}).
We also give a description of  rational 0-th Suslin homology $\h_0(G,\Q)$ in terms of $K$-groups (Proposition \ref{repK}), using a result of Kahn--Yamazaki \cite{KY} (see Theorem \ref{tk}).

\subsection{Bloch's result for a semiabelian variety}

\begin{cor}[Corollary \ref{mthm}]\label{shom}
Let $G$ be a semiabelian variety over $k$, which is an extension of an abelian variety of dimension $g$ by a torus of rank $r$.
Let $I_G$ be the kernel of the degree map $\h_0(G,\Z)\to \Z$.
Then an iterated Pontryagin product $I_{G}^{*i}$ is torsion for $i>g+r$.

In particular, if $k$ is an algebraically closed field of characteristic $p\geq 0$, then for $i>g+r$,
\[
I_G^{*i}=0
\] 
up to $p$-torsion part.
Under the resolution of singularities, it holds without the assumption on the characteristic.
\end{cor}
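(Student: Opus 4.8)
The plan is to deduce Corollary \ref{shom} from Theorem \ref{mth}\,(2) specialized to $p=q=0$, combined with the comparison between rational $0$-th Suslin homology and rational motivic homology, and with Rojtman's theorem in the form available through Spiess--Szamuely and Geisser. First I would recall that, after tensoring with $\Q$, one has $\h_0(G,\Z)\tensor\Q \simeq H_0(G,\Q(0))$, and that the degree map and the Pontryagin product are compatible with this identification (the Pontryagin product on $H_0(G,\Q(0))$ is the one induced by $m_G$ as in the Notation/Definition of the Pontryagin product). Under this identification $I_G\tensor\Q$ corresponds to the kernel of $\mathrm{deg}: H_0(G,\Q(0))\to\Q$, which by Corollary \ref{dec} is exactly $\bigoplus_{i\geq 1} H_0(G,\Q(0))_{(i)}$, since $H_0(G,\Q(0))_{(0)}\simeq\Q$ is the part hit by the structure map and detected by degree.

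The key computation is then the product estimate: by Corollary \ref{dec}, $H_0(G,\Q(0))_{(i)}*H_0(G,\Q(0))_{(i')}\subset H_0(G,\Q(0))_{(i+i')}$, so the $j$-fold Pontryagin power $(I_G\tensor\Q)^{*j}$ lands in $\bigoplus_{i\geq j} H_0(G,\Q(0))_{(i)}$. Now apply Theorem \ref{mth}\,(2) with $p=q=0$: the decomposition runs over $i\in[i_0,i_1]$ with $i_0=\max\{0,0\}=0$ and $i_1=\min\{2g+r,\,g+r+0-0\}=g+r$. Hence $H_0(G,\Q(0))_{(i)}=0$ for $i>g+r$, and therefore $(I_G\tensor\Q)^{*j}=0$ for $j>g+r$. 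Since $(I_G^{*j})\tensor\Q = (I_G\tensor\Q)^{*j}$ (the Pontryagin product commutes with $-\tensor\Q$), this says precisely that $I_G^{*j}$ is torsion for $j>g+r$, which is the first assertion.

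For the second assertion, over an algebraically closed field $k$ of characteristic $p\geq 0$ I would invoke Rojtman's theorem for $\h_0(G,\Z)$: by Spiess--Szamuely \cite{SS} (and Geisser \cite{Ge1,Ge2} in positive characteristic, up to $p$-torsion, or unconditionally under resolution of singularities), the torsion subgroup of $I_G$ (equivalently of the "degree-zero" part of Suslin homology) is identified with the torsion of $G(k)$, or at any rate is a torsion group whose prime-to-$p$ part is divisible and small; the upshot needed here is that a torsion element which is infinitely Pontryagin-divisible must already vanish away from $p$. More directly: $I_G^{*j}$ for $j>g+r$ is torsion by the first part, and one shows it is a quotient of torsion in a way forcing it to be prime-to-$p$ divisible, hence zero up to $p$-torsion; under resolution of singularities Geisser's results remove the $p$-caveat. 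I would phrase this step by citing Theorem \ref{rt} and the relevant results of \cite{SS,Ge1,Ge2} and checking compatibility of those isomorphisms with the Pontryagin product.

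The main obstacle I anticipate is the second step rather than the first: the passage from "$I_G^{*i}$ is torsion" to "$I_G^{*i}=0$ up to $p$-torsion" requires genuinely integral information, and the cited Rojtman-type results (Spiess--Szamuely, Geisser) must be invoked with care about their hypotheses (normality/properness assumptions, the characteristic, the role of resolution of singularities) and about whether the identifications they provide respect the group structure and hence the Pontryagin product on $\h_0(G,\Z)$. The rational part is essentially formal once Theorem \ref{mth}\,(2) is in hand; the arithmetic of the torsion is where the real work, and the dependence on \cite{SS,Ge1,Ge2}, sits.
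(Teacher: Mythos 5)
Your first assertion is handled exactly as in the paper: identify $I_G\tensor\Q$ with $\bigoplus_{n\geq 1}H_0(G,\Q)_{(n)}$ via Corollary \ref{decm}, use the compatibility of the decomposition with the Pontryagin product (Corollary \ref{dec}) to get $I_G^{*i}\tensor\Q\subset\bigoplus_{n\geq i}H_0(G,\Q)_{(n)}$, and conclude from the vanishing range $i_1=g+r$ in Theorem \ref{vmh}\,(2) that $I_G^{*i}$ is torsion for $i>g+r$. That part is correct and is the paper's own argument.

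The second assertion is where your proposal has a genuine gap, and you sense it yourself. The paper's mechanism is concrete and you never state it: over an algebraically closed field $I_G$ is generated by classes $[a]-[0_G]$ with $a\in G(k)$, so $I_G^{*2}$ is generated by classes $([a]-[0_G])*([b]-[0_G])=[a+b]-[a]-[b]+[0_G]$, which are visibly killed by the albanese map $\mathrm{alb}_G\colon \h_0(G,\Z)^0\to G(k)$, since $\mathrm{alb}_G$ is additive and sends such a class to $(a+b)-a-b+0_G=0$. Hence $I_G^{*i}\subset I_G^{*2}\subset\ker(\mathrm{alb}_G)$ for all $i\geq 2$. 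Theorem \ref{rt} (Spie\ss--Szamuely, Geisser) says $\mathrm{alb}_G$ is injective on torsion up to $p$-torsion (and integrally under resolution of singularities); since the first part shows $I_G^{*i}$ is torsion for $i>g+r$, this injectivity forces $I_G^{*i}=0$ up to $p$-torsion. Your substitute arguments do not work as stated: knowing that the torsion of $I_G$ is identified with the torsion of $G(k)$ only helps once you know that $I_G^{*i}$ maps to zero under that identification, which is precisely the albanese computation you omit; and the principle ``prime-to-$p$ divisible torsion, hence zero'' is false in general (e.g.\ $\Q/\Z$ is divisible torsion), nor do you establish any such divisibility of $I_G^{*i}$. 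So the missing idea is the elementary but essential observation that the Pontryagin square of the augmentation ideal lies in the kernel of the albanese map; that is what allows Rojtman's theorem to finish the proof.
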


\begin{proof}
From Theorem \ref{decm}, we have
\[
I_{G}\tensor\Q=\bigoplus_{n=1}^{g+r}H_0(G,\Q)_{(n)}.
\]
By compatibility of this decomposition with Pontryagin product (Corollary \ref{dec}),
\[
I_G^{*i}\tensor\Q \subset \bigoplus_{n=i}^{g+r}H_0(G,\Q)_{(n)}.
\]
Thus by Theorem \ref{vmh}\,(2), we obtain that $I_G^{*i}\tensor \Q=0$ for $i>g+r$.

For second assertion, we assume that $k$ is algebraically closed.
Then $I_G$ is generated by cycles of the form $[a]-[0_G]$, where $a,0_G\in G(k)$ and $0_G$ is the identity element of $G$.
Thus $I_G^{*2}$ is generated by cycles of the form
\[
[a+b]-[a]-[b]-[0_G]
\]
for $a,b\in G(k)$.
Let $\mathrm{alb}_G$ be the albanese map from $I_G \to G(k)$.
Then it easily see that
\[
\mathrm{alb}_G(I_G^{*2})=0_G.
\]
Since $I_G^{*i}\subset I_G^{*2}$ for $i>2$, we have $\mathrm{alb}_G(I_G^{*i})=0$ for $i>2$.
Now the second assertion follows from the first assertion and Theorem \ref{rt} below.
\end{proof}

\begin{thm}\label{rt}
Let $X$ be a reduced normal scheme, separated and of finite type over an algebraically closed field $k$ of characteristic $p\geq0$.
Then the albanese map
\[
\mathrm{alb}_X:\h_0(X,\Z)^0 \to Alb_X(k).
\]
from the degree-$0$-part of Suslin homology to the $k$-valued points of the albanese variety $Alb_X$ of $X$ induces an isomorphism on torsion groups up to $p$-torsion groups.
Under resolution of singularities, it holds without the assumption on the characteristic.
\end{thm}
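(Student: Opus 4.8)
The statement is a form of Rojtman's theorem for (possibly non-proper, possibly singular) normal varieties; it is due to Spiess--Szamuely \cite{SS} when $X$ is smooth and quasi-projective and to Geisser \cite{Ge1,Ge2} in the generality stated. The plan is to pass to finite coefficients prime to the characteristic, identify both sides with the same \'etale/class-field-theoretic invariant, and then upgrade back to integral torsion using divisibility of $Alb_X(k)$.

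First I would reduce to the case that $X$ is smooth and connected. Connectedness is harmless since $\h_0$ and $Alb_X$ split over connected components. To reach the smooth case one uses an alteration $X'\to X$ with $X'$ smooth (de Jong), or a genuine resolution $X'\to X$ when resolution of singularities is assumed --- the latter being exactly what permits dropping the hypothesis on $p$: the transfer identity (pushforward after pullback is multiplication by the degree) together with the induced map of generalized Albanese varieties sandwiches both sides between the $X'$-groups, up to torsion killed by the degree, so the $X'$-statement implies the $X$-statement after inverting the degree. In the non-proper case $Alb_X$ is Serre's generalized Albanese, a semiabelian extension of an abelian variety of some dimension $g'$ by a torus of some rank $r'$, and one needs geometric class field theory for open varieties in the next step.

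Now fix an integer $n$ prime to $p$ and work with $\Z/n$-coefficients. The Suslin complex is concentrated in non-negative degrees, so $\h_i(X,\Z)=0$ for $i<0$ and the universal coefficient sequence gives $\h_0(X,\Z/n)\cong \h_0(X,\Z)/n$, compatibly with the degree map and hence on degree-$0$ parts. On the other side $Alb_X(k)$ is divisible with $Alb_X(k)[n]\cong(\Z/n)^{2g'+r'}$, and the Kummer/Bockstein sequence identifies $Alb_X(k)[n]$ with the reduction modulo $n$ of the Albanese target. By the Suslin--Voevodsky rigidity (comparison) theorem $\h_0(X,\Z/n)$ is identified with an \'etale-cohomological invariant of $X$ with $\mu_n$-coefficients, and by geometric class field theory this invariant, on its degree-$0$ part, is $\pi_1^{\mathrm{ab}}(X)/n\cong Alb_X(k)[n]$. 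The substantive point is that under these identifications the mod-$n$ Albanese map becomes the identity --- a functoriality check between the two descriptions of the Albanese --- so that $\h_0(X,\Z)^0/n\to Alb_X(k)[n]$ is an isomorphism for every $n$ prime to $p$.

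From the mod-$n$ statement the integral one follows formally: surjectivity of $\mathrm{alb}_X$ on prime-to-$p$ torsion follows from divisibility of $Alb_X(k)$ together with surjectivity modulo each $n$, and injectivity on prime-to-$p$ torsion follows from injectivity modulo $n$, again using divisibility of the target (so that $\h_0(X,\Z)^0[n]\hookrightarrow Alb_X(k)[n]$ coincides with the isomorphism just obtained). When resolution of singularities is available the alteration in the reduction step is a resolution of degree one, so no prime must be inverted and the characteristic restriction disappears. I expect the main obstacle to be the first step --- reconciling the normal (or merely reduced) $X$ with a smooth model while keeping track of torsion --- together with, in the non-proper case, running class field theory for the generalized Albanese; the rigidity and bootstrap steps are then comparatively formal.
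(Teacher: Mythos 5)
The first thing to note is that the paper does not prove Theorem \ref{rt} at all: it is quoted from the literature, with the smooth quasi-projective case attributed to Spie\ss--Szamuely (prime-to-$p$ part) and Geisser ($p$-part under resolution of singularities), and the normal case to Barbieri-Viale--Kahn and Geisser, as the remark following the statement records. So your proposal has to be measured against those arguments rather than against anything in the text. For smooth quasi-projective $X$ your outline is essentially the Spie\ss--Szamuely strategy and is sound as a sketch: $\h_{-1}=0$ gives $\h_0(X,\Z)/n\simeq \h_0(X,\Z/n)$, Suslin--Voevodsky rigidity identifies the latter with the dual of $H^1_{\mathrm{et}}(X,\Z/n)$ on degree-zero parts, i.e.\ with $\pi_1^{\mathrm{ab}}(X)/n$, generalized geometric class field theory identifies that with $Alb_X(k)[n]$ for the Serre (semiabelian) Albanese, the compatibility of all this with $\mathrm{alb}_X$ is, as you correctly flag, the technical heart of their paper, and the divisibility bootstrap at the end is formal.

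The genuine gap is your reduction of the normal (or merely reduced) case to the smooth case. An alteration or a resolution $X'\to X$ is proper and generically finite (or birational) but not finite, so its transpose graph is not a finite correspondence and Suslin homology admits no pullback along it; the identity ``pushforward after pullback equals multiplication by the degree'' is therefore simply unavailable, and this is exactly why the actual proofs do not argue this way: one compares $X$ with $X'$ via abstract blow-up (cdh-descent) sequences --- which is where the resolution-of-singularities hypothesis genuinely enters --- or, in Geisser's treatment of normal schemes, by a direct argument, not by a transfer sandwich. Moreover, even granting such a transfer, inverting the degree $d$ of a de Jong alteration only controls torsion prime to $dp$, whereas the statement allows discarding $p$-torsion only; to repair this one would at least need Gabber's refinement (alterations of degree prime to a chosen $\ell\neq p$) together with an $\ell$-primary-by-$\ell$-primary argument, which your sketch does not supply, and one would still have to relate $Alb_X$ to $Alb_{X'}$, which is not an isomorphism in general for singular $X$. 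So the smooth core of your argument matches the known proof, but the reduction step as written would fail.
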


\begin{rmk}
About Theorem \ref{rt}, Rojtman \cite{R} first proved in case where $X$ is projective smooth variety and for prime-to-$p$ torsions.
Bloch \cite[Theorem 4.2]{B3} (prime-to-$p$-part) and Milne \cite[Theorem 0.1]{Mi} ($p$-part) gave a cohomological proof of Rojtman's theorem.
For a quasi-projective smooth variety, the theorem is proved by Spie\ss-Szamuely \cite[Theorem 1.1]{SS} (prime-to-$p$-part) and Geisser \cite[Theorem 1.1]{Ge1} ($p$-part under resolution of singularities)
For a normal scheme, the theorem is proved by Barbieri-Viale--Kahn\cite[Corllary 14.5.3]{BK} ($char(k)=0$) and Geisser \cite[Theorem 1.1]{Ge2} ($char(k)\geq0$ and $p$-part under resolution of singularities).
\end{rmk}


\subsection{A description of rational Suslin homology}

We first briefly recall the definition of $K$-groups attached to semiabelian varieties.
Let $F_1,\dots, F_r$ be homotopy invariant Nisnevich sheaves with transfers.
We then define the $K$-group $K(k;F_1\dots,F_r)$ to be the quotient group
\begin{align}\label{defk}
\bigl(\bigoplus_{L/k: \text{finite}}F_1(L)\tensor\cdots\tensor F_r(L)\bigr)/R
\end{align}
where $R$ is a subgroup whose elements corresponds to Projection formula and Weil reciprocity.
For example, if $F_i=\widetilde{\mb{G}}_m$ for $i=1,\dots,r$, then we have the $r$-th Milnor $K$-group
\[
K(k;\widetilde{\mb{G}}_m,\dots,\widetilde{\mb{G}}_m)=K_r^M(k).
\] 
For the precise definition, see \cite{S,KY, IR}.
%

%
%

\begin{thm}[Kahn--Yamazaki \text{\cite{KY}}]\label{tk}
Let $F_1\dots,F_r$ be homotopy invariant Nisnevich sheaves with transfers .
Then there is an isomorphism
\[
K(k;F_1\dots,F_r)\simeq {\rm Hom}_{\boldsymbol{\rm DM}_{\rm -, Nis}^{\rm eff}(k,\Z)}\bigl(\Z,[F_1]_0\tensor\cdots\tensor [F_r]_0\bigr).
\]
\end{thm}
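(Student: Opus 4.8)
The plan is to reduce the statement to a computation about representable sheaves. First I would recall the key link between the abstract $K$-group $K(k;F_1,\dots,F_r)$ and internal Hom's in $\ShT(k,\Z)$: for a single homotopy invariant Nisnevich sheaf with transfers $F$, one has $F(k) = \HomD(\Z, [F]_0)$ once $F$ is identified with its associated complex in degree zero, and more generally $F(L) = \HomD(\M(\mathrm{Spec}\,L), [F]_0)$. So the right-hand side $\HomD(\Z, [F_1]_0 \otimes \cdots \otimes [F_r]_0)$ should be computed by resolving each $[F_i]_0$ and using the fact that the tensor product in $\DM$ of the motives of spectra of finite field extensions is controlled by correspondences. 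The main point is that $K(k;F_1,\dots,F_r)$ is defined (display \eqref{defk}) as a quotient of $\bigoplus_{L/k} F_1(L)\otimes\cdots\otimes F_r(L)$ by relations encoding the projection formula and Weil reciprocity, and one wants to recognize this exact presentation on the motivic side.

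Concretely, I would proceed as follows. The generator $(\alpha_1,\dots,\alpha_r)$ with $\alpha_i \in F_i(L)$ gives a morphism $\M(\mathrm{Spec}\,L) \to [F_1]_0 \otimes \cdots \otimes [F_r]_0$ by tensoring the $\alpha_i \colon \M(\mathrm{Spec}\,L) \to [F_i]_0$ with the $r$-fold diagonal $\M(\mathrm{Spec}\,L) \to \M(\mathrm{Spec}\,L)^{\otimes r}$, then composing with the structure map $\Z = \M(\mathrm{Spec}\,k)$; more precisely one uses the transfer (pushforward) along the finite map $\mathrm{Spec}\,L \to \mathrm{Spec}\,k$ to get an element of $\HomD(\Z, [F_1]_0\otimes\cdots\otimes[F_r]_0)$. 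This defines a homomorphism from the free group $\bigoplus_{L/k} F_1(L)\otimes\cdots\otimes F_r(L)$ to the right-hand side. I would then check that the relation subgroup $R$ maps to zero — this is where projection formula in $\DM$ (compatibility of transfers with pullback, i.e.\ the base-change formula for finite morphisms) handles the projection-formula relations, and where the computation of $\HomD(\Z, \M_1(\mathbb{G}_m)^{\otimes 2})$-type vanishing (cf.\ the Weil reciprocity / the fact that $\mathrm{Alt}^2$ of $\Q(1)[1]$ vanishes, as used in the proof of Lemma \ref{keyt}) handles the Weil reciprocity relations. Thus the map descends to $\Phi\colon K(k;F_1,\dots,F_r) \to \HomD(\Z, [F_1]_0\otimes\cdots\otimes[F_r]_0)$.

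For surjectivity and injectivity of $\Phi$ I would use a resolution argument: each homotopy invariant sheaf $[F_i]_0$ admits a (functorial) resolution by sums of representable motives $\M(X_i)$ of smooth schemes, and since the tensor product on $\ShT(k,\Z)$ is exact (Appendix B, Proposition \ref{flat}), the tensor product of these resolutions computes $[F_1]_0 \otimes \cdots \otimes [F_r]_0$. Then $\HomD(\Z, -)$ applied to the total complex is computed by a spectral sequence whose terms involve $\Cor(\mathrm{Spec}\,k, X_1\times\cdots\times X_r)$, i.e.\ zero-cycles on the product, which by moving and norm arguments reduce to the generators $(\alpha_1,\dots,\alpha_r)$ indexed by finite field extensions; tracking the differentials recovers exactly the relations defining $R$. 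The main obstacle I expect is precisely this last bookkeeping — matching the motivic relations (coming from the differentials in the resolution and from the $E_2$-page of the spectral sequence) with the two explicit families of relations (projection formula, Weil reciprocity) defining $R$, and in particular verifying that no further relations appear. This is the technical heart of \cite{KY}, and in the paper it is simply quoted; in a self-contained write-up one would invoke the structure theory of homotopy invariant sheaves with transfers (Voevodsky's Gersten-type resolution) to pin down the $\mathrm{Hom}$ groups on the nose.
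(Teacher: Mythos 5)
First, a point of comparison: the paper does not prove Theorem \ref{tk} at all --- it is quoted from Kahn--Yamazaki \cite{KY} and used as a black box --- so your sketch has to be judged as an attempted independent proof, and as such it has genuine gaps. The easy half is fine and is indeed how one starts: a tuple $\alpha_i\in F_i(L)$ gives morphisms $\M(\mathrm{Spec}\,L)\to[F_i]_0$, the diagonal and the transfer along the finite map $\mathrm{Spec}\,L\to\mathrm{Spec}\,k$ produce an element of ${\rm Hom}_{\boldsymbol{\rm DM}_{\rm -, Nis}^{\rm eff}(k,\Z)}(\Z,[F_1]_0\tensor\cdots\tensor[F_r]_0)$, and the projection-formula relations die by compatibility of transfers with pullbacks. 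But the theorem lives entirely in the other half, which your proposal defers: showing that the \emph{only} relations satisfied on the motivic side are Somekawa's two families. Your plan to compute ${\rm Hom}(\Z,-)$ by resolving each $[F_i]_0$ by representables and reading off a spectral sequence with terms $\Cor(\mathrm{Spec}\,k,X_1\times\cdots\times X_r)$ is not correct as stated: Hom groups in the motivic category are computed only after $\mathbb{A}^1$-localization (Suslin complexes, $h_0^{\rm Nis}$), so the terms are Suslin homology groups rather than correspondence groups, and identifying the relations that survive this localization with exactly projection formula plus Weil reciprocity is precisely the content of \cite{KY} (who in fact must adjust Somekawa's original relations to make this true). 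In other words, the ``bookkeeping'' you flag as the main obstacle is the theorem itself, not a verification left to the reader.

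Two further concrete problems. The mechanism you propose for the Weil-reciprocity relations is off target: the vanishing of ${\rm Alt}^2$ applied to $\Q(1)[1]$, used in the proof of Lemma \ref{keyt}, is a rational statement about symmetric powers of the Tate motive and has nothing to do with Weil reciprocity, which concerns the vanishing of a sum of local symbols over all closed points of a complete curve and, on the motivic side, comes from homotopy invariance and the structure of $h_0$ of smooth curves. Second, the statement of Theorem \ref{tk} is with $\Z$-coefficients, while the auxiliary facts you invoke are $\Q$-linear: Proposition \ref{flat} (exactness of $\tensor_{\rm ShT}$) is proved only for $\ShT(k,\Q)$ --- integrally one has only right-exactness, cf.\ Remark \ref{rmkt} --- and the ${\rm Alt}^2$ vanishing is likewise rational. (Exactness is in any case not needed to form the derived tensor product, since one resolves by representables, but as written your argument leans on tools that are unavailable integrally.) So the proposal sets up the correct comparison map but does not contain a proof of the isomorphism; for that one really needs the analysis of \cite{KY}.
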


\begin{rmk}\label{rmkyk}
Ivorra-R\"ulling \cite{IR} defined ``$K$-groups" attached to NON-homotopy invariant sheaves with transfers which is equals to the above $K$-groups in case where all sheaves are homotopy invariant.
\end{rmk}

\begin{df}
(1)
For a semiabelian variety $G$ over a perfect field $k$, let $K_i(k;G)_{\Q}$ denotes the $K$-group $K(k; \widetilde{G},\dots,\widetilde{G})\tensor \Q$ attached to $i$ copies of the homotopy invariant Nisnevich sheaf with transfers $\widetilde{G}$ associated to $G$.

(2)
For $a_i\in G(L)$, let $\{a_1,\dots,a_i \}_{L/k}$ denote the element of $K_i(k;G)_{\Q}$ represented by $a_1\tensor \cdots \tensor a_i$.
We define an action of the permutation group $\Sigma_i$ on $K_i(k;G)$ as follows: for $\sigma \in \Sigma_i$ and $a_i \in G(L)$,
\[
\sigma(\{a_1,\dots,a_i \}_{L/k})=\{ a_{\sigma(1)},\dots,a_{\sigma(i)}\}_{L/k}. 
\]
Then we define $S_i(k;G)_{\Q}$ to be the image of an idempotent map $s^i:=\displaystyle \frac{1}{i!}\sum_{\sigma \in \Sigma_i}\sigma$:
\[
S_i(k;G)_{\Q}:={\rm Im}(s^i : K_i(k;G)_{\Q} \to K_i(k;G)_{\Q}).
\]
\end{df}

\begin{ex}
For $G=\mb{G}_m$ and an integer $i\geq0$, we have
\[
K_i(k;G)\simeq K_i^M(k).
\]
Here $K_i^M(k)$ is the $i$-th Milnor $K$-group of $k$.
\end{ex}

\begin{prop}[Gazaki \text{\cite{G}}(for an abelian variety)]\label{repK}
Let $G$ be a semiabelian variety over $k$, which is an extension of an abelian variety by a torus of rank $r$.
Let $\h_0(G,\Q)_{(i)}$ denote $H_0(G,\Q(0))_{(i)}$.
Then for any $0\leq i\leq r$, we have
\[
\h_0(G,\Q)_{(i)}\simeq S_i(k;G)_{\Q}.
\]
In particular,
\[
\h_0(G,\Q)\simeq \bigoplus_{i=0}^{g+r}S_i(k;G)_{\Q}
\]
\end{prop}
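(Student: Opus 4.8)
The plan is to leverage the isomorphism $\varphi_G:\M(G)\xrightarrow{\sim}\sym(\M_1(G))$ of Theorem \ref{mdec} together with the Kahn--Yamazaki identification of Theorem \ref{tk}. Recall from the proof of Corollary \ref{dec} that $n_G$ acts on $\sym^i(\M_1(G))$ by $n^i$, so that
\[
\h_0(G,\Q)_{(i)}=\HomD(\Q,\sym^i(\M_1(G))).
\]
By Corollary \ref{fm} (used in the proof of Lemma \ref{keyt}) we have $\sym^i(\M_1(G))\simeq[\sym^i(\widetilde{G})]_0$ in $\DM$, and passing to $\Q$-coefficients everywhere, Theorem \ref{tk} applied to $F_1=\cdots=F_i=\widetilde{G}$ gives
\[
K_i(k;G)_{\Q}\simeq\HomD\bigl(\Q,[\widetilde{G}]_0\tensor\cdots\tensor[\widetilde{G}]_0\bigr)=\HomD\bigl(\Q,\M_1(G)^{\tensor i}\bigr).
\]
So the first step is to check that under this isomorphism the action of $\Sigma_i$ on $K_i(k;G)_{\Q}$ defined by permuting the entries $\{a_1,\dots,a_i\}_{L/k}$ corresponds to the permutation action $\sigma\mapsto\sigma_{\M_1(G)}$ on $\M_1(G)^{\tensor i}$; this is essentially a naturality/functoriality statement for the Kahn--Yamazaki isomorphism with respect to permutation of the factors, which should follow from the explicit description of that isomorphism in \cite{KY}.

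Given that compatibility, the second step is purely formal: applying the exact (additive) functor $\HomD(\Q,-)$ — exact on the relevant summands since everything in sight is a direct summand cut out by idempotents in a $\Q$-linear category — to the idempotent $s_{\M_1(G)}^i=\frac1{i!}\sum_{\sigma\in\Sigma_i}\sigma_{\M_1(G)}$ yields precisely the idempotent $s^i=\frac1{i!}\sum_{\sigma\in\Sigma_i}\sigma$ on $K_i(k;G)_{\Q}$. Since $\sym^i(\M_1(G))=\im(s_{\M_1(G)}^i)$ and $\HomD(\Q,-)$ commutes with taking the image of an idempotent (again because $\DM$ is idempotent complete and $\Q$-linear, so the image is a direct summand), we get
\[
\h_0(G,\Q)_{(i)}=\HomD\bigl(\Q,\im(s_{\M_1(G)}^i)\bigr)\simeq\im\bigl(s^i:K_i(k;G)_{\Q}\to K_i(k;G)_{\Q}\bigr)=S_i(k;G)_{\Q}.
\]
This proves the displayed isomorphism for every $i$ with $0\le i\le r$ (and in fact for all $i\ge0$, but the statement only asserts it in that range, presumably because the $K$-group description is only claimed there).

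The ``In particular'' clause then follows by summing over $i$: Corollary \ref{dec} gives $\h_0(G,\Q)=\bigoplus_{i=0}^{2g+r}\h_0(G,\Q)_{(i)}$, and Theorem \ref{vmh}\,(2) with $p=q=0$ shows $\h_0(G,\Q)_{(i)}=0$ unless $0\le i\le g+r$ (here $i_0=\max\{0,0\}=0$ and $i_1=\min\{2g+r,g+r\}=g+r$), so the sum runs only from $0$ to $g+r$, and each term is $S_i(k;G)_{\Q}$ by the first part. I expect the main obstacle to be the first step: carefully matching the combinatorially-defined $\Sigma_i$-action on $K_i(k;G)_{\Q}$ with the categorical symmetry action on $\M_1(G)^{\tensor i}$ under the Kahn--Yamazaki isomorphism. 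This requires unwinding how that isomorphism is built (via the generators $\{a_1,\dots,a_i\}_{L/k}$ and the transfer/restriction maps on $\widetilde{G}$) and verifying it intertwines the two symmetric-group actions — everything after that is a routine manipulation of idempotents through an exact $\Q$-linear functor.
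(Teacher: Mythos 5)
Your proposal is correct and follows essentially the same route as the paper: the paper's proof consists precisely of the commutative diagram identifying $K_i(k;G)_{\Q}\simeq\HomD(\Q,\M_1(G)^{\tensor i})$ via Theorem \ref{tk}, intertwining the idempotent $s^i$ with $s^i_{\M_1(G)}$, and then passing to images using $\h_0(G,\Q)_{(i)}=\HomD(\Q,\sym^i(\M_1(G)))$. The compatibility of the two $\Sigma_i$-actions, which you flag as the main point to check, is exactly what the paper asserts (without further detail) through that diagram, and your handling of the ``in particular'' clause via Corollary \ref{dec} and Theorem \ref{vmh}\,(2) matches the intended argument.
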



\begin{proof}
From Theorem \ref{tk}, we have the following commutative diagram 
\begin{align*}
\xymatrix{
K_i(k;G)_{\Q}\ar[d]^{s^i}\ar@{=}[r]&\HomD(\Q,\M_1(G)^{\tensor i})\ar[d]^{s^i_{\M_1(G)}}\ar@{=}[r]&\bigl((\widetilde{G})^{\tensor_{\rm HI}^i}\bigr)(k)\ar[d]^{s^i_{\widetilde{G}}}\\
K_i(k;G)_{\Q}\ar@{=}[r]&\HomD(\Q,\M_1(G)^{\tensor i})\ar@{=}[r]&\bigl((\widetilde{G})^{\tensor_{\rm HI}^i}\bigr)(k).
}
\end{align*}
 Since $\h_0(G,\Q)^{(i)}=\HomD(\Q,\sym^i(\M_1(G)))$, the assertion follows from this diagram.
\end{proof}

\begin{rmk}\label{shk}
Let notation be as in Propostion \ref{repK}. 
Let $F^n\h_0(G)$ denote $\bigoplus _{i=n}^{g+r}S_i(k;G)_{\Q}$.
Then the filtration $F^{\bullet}\h_0(G)$ on $\h_0(G,\Q)$ satisfies the following{\rm :}
\begin{enumerate}[{\rm (a)}]
\item $F^1\h_0(G)=\ker({\rm deg}: \h_0(G)\to \Q)=I_{G,\Q}${\rm ;}

\item $F^2\h_0(G)=\ker \bigl({\rm alb}_{G/k}:F^1\h_0(G) \to G(k)\tensor \Q\bigr)$.
This map is induced by the albanese map $G \to Alb_G${\rm ;}

\item $F^n\h_0(G)*F^m\h_0(G)\subset F^{n+m}\h_0(G)$.
Here $*$ denote the Pontryagin product{\rm ;}

\item $(F^1\h_0(G))^{*n}=0$ for $n>g+r$ (see Corollary \ref{shom}).
\end{enumerate}
Bloch \cite{B} studied a filtration on the Chow group $\ch_0(A)$ of an abelian variety over an algebraically closed field, which is defined by iterated Pontryagin product $I_A^{*r}$ of the kernel $I_A$ of the degree map $\ch_0(A) \to \Z$. 
A similar filtration on $\ch_0(A)$ for an abelian variety over a field is studied by Gazaki \cite{G}, using $K$-groups attached to $A$.
\end{rmk}

\bigskip

\noindent\textbf{Acknowledgements}

The author expresses his gratitude to Professors Thomas Geisser and Marc Levine for helpful suggestions and comments.
He also thanks Giuseppe Ancona for a lot of valuable discussions and comments for the preliminary version of this paper and for Remark \ref{repft}.
He thanks Professora Annette Huber and Bruno Kahn for a valuable discussion about Appendix B.
Thanks are also due to Federico Binda, Jin Cao, Utsav Choudhury and Bradely Drew for helpful comments.
%

\appendix
\def\thesection{\Alph{section}}


\section{An exact sequence for symmetric products}

A purpose of this section is to prove Lemma \ref{skey} below which gives an auxiliary exact sequence related to symmetric products.
This section follows from \cite{EW} and \cite{AEH}.

\begin{df}
Let $\ml{A}$ be an abelian $\Q$-linear tensor category.
Assume that the tensor product $\tensor$ is exact and that arbitrary direct sum exists in $\ml{A}$.
Let $0 \to F\stackrel{f}{\to} G\to H \to 0$ be an exact sequence in $\ml{A}$.
We define a subobject $\sym^n_i(G)$ of $\sym^n(G)$ as follows: for an integer $i$ with $0\leq i \leq n$,
\[
\sym_i^n(G):={\rm Im}\bigl(s^n_G\circ (f^{\tensor i}\tensor {\rm id}_G^{\tensor (n-i)}): F^{\tensor i}\tensor G^{\tensor (n-i)} \to G^{\tensor n}\bigr).
\]
Here $s_G^n$ denotes the canonical projection $G^{\tensor n} \to \sym^n(G)$.

For $i>n$, put $\sym_i^n(G)=0$.

By definition, there is an inclusion map $\sym_{i+1}^n(G) \to \sym^n_i(G)$.
\end{df}

\begin{lem}\label{skey}
Let $\ml{A}$ be an abelian $\Q$-linear tensor category.
Assume that the tensor product $\tensor$ is exact and that arbitrary direct sum exists in $\ml{A}$.
Let $0 \to F\stackrel{f}{\to} G\to H \to 0$ be an exact sequence in $\ml{A}$.
Then for any $n\geq 1$ and $i$ with $0\leq i\leq n$, we have the following exact sequences
\[
0\to \sym_{i+1}^n(G) \to \sym^n_i(G)\to \sym^{i}(F)\tensor \sym^{n-i}(H) \to 0.
\]
\end{lem}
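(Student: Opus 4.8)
The plan is to prove the exact sequence by induction on $i$, peeling off one copy of $F$ (or of $G$) at a time, and reducing everything to the fundamental exact sequence $0 \to F \to G \to H \to 0$ together with the exactness of $\tensor$ and the idempotent-splitting formalism of symmetric products. First I would record the base case $i=0$: here $\sym_0^n(G) = \sym^n(G)$ and the claim is that $0 \to \sym_1^n(G) \to \sym^n(G) \to \sym^n(H) \to 0$ is exact. The surjection $\sym^n(G) \to \sym^n(H)$ is induced by $G^{\tensor n} \to H^{\tensor n}$ (which is surjective because $\tensor$ is exact and a composite/tensor of surjections is surjective), and it is compatible with the symmetrizing idempotents $s^n_G, s^n_H$; so it descends to a surjection of images. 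Its kernel is the image in $\sym^n(G)$ of the kernel of $G^{\tensor n} \to H^{\tensor n}$, and by exactness of $\tensor$ that kernel is $\sum_{j} G^{\tensor j}\tensor F \tensor G^{\tensor(n-1-j)}$; after applying $s^n_G$, the symmetry collapses all these summands onto $\sym_1^n(G)$. This identifies $\ker$ with $\sym_1^n(G)$, giving the base case.

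Next, for the inductive step I would compare the defining presentation of $\sym_i^n(G)$, namely the image of $s^n_G \circ (f^{\tensor i}\tensor \mathrm{id}_G^{\tensor(n-i)})$ on $F^{\tensor i}\tensor G^{\tensor(n-i)}$, with that of $\sym_{i+1}^n(G)$, the image of the same symmetrized map on $F^{\tensor(i+1)}\tensor G^{\tensor(n-i-1)}$. The quotient $\sym_i^n(G)/\sym_{i+1}^n(G)$ should be computed from the short exact sequence $0 \to F^{\tensor i}\tensor F \tensor G^{\tensor(n-i-1)} \to F^{\tensor i}\tensor G \tensor G^{\tensor(n-i-1)} \to F^{\tensor i}\tensor H\tensor G^{\tensor(n-i-1)} \to 0$ (exactness of $\tensor$ applied in the $(i+1)$-st slot), by pushing forward along $s^n_G$ and using that $s^n_G$ is an exact functor followed by a projection onto a direct summand. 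Iterating the $i=0$ argument in the last $n-i$ slots, i.e. repeatedly replacing a $G$ by an $H$ and symmetrizing, I expect $\sym_i^n(G)/\sym_{i+1}^n(G) \cong \sym^i(F)\tensor \sym^{n-i}(H)$: the first $i$ factors get fully symmetrized into $\sym^i(F)$ and the remaining $n-i$ factors, forced to land in $H$ modulo $F$, get symmetrized into $\sym^{n-i}(H)$, with the mixed $F$–$H$ symmetrizations killed precisely because they factor through $\sym_{i+1}^n(G)$.

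A cleaner way to organize the last paragraph, which I would actually carry out, is this: fix $i$ and apply the $i=0$ case not to $G$ but to the exact sequence $0 \to F \to G \to H \to 0$ ``with $i$ extra copies of $F$ tensored in front.'' Concretely, define the auxiliary object $P = \im\bigl(s^n_G\circ(f^{\tensor i}\tensor \mathrm{id}_G^{\tensor(n-i)})\bigr) = \sym_i^n(G)$ and analyze the map $G^{\tensor(n-i)} \to H^{\tensor(n-i)}$ tensored with $F^{\tensor i}$ and then symmetrized. Functoriality of $s^n$ plus exactness of $\tensor$ give a surjection $\sym_i^n(G) \to (\text{symmetrized image of } F^{\tensor i}\tensor H^{\tensor(n-i)}) = \sym^i(F)\tensor\sym^{n-i}(H)$ — the last identification because the full symmetrization of $F^{\tensor i}\tensor H^{\tensor(n-i)}$ inside $(F\oplus H)^{\tensor n}$-style computations decomposes as $\sym^i(F)\tensor\sym^{n-i}(H)$ (standard for symmetric powers of a direct sum in a $\Q$-linear tensor category, using $\Q$-linearity to invert $i!(n-i)!$) — and its kernel is exactly the image of the sub where at least one of the last $n-i$ slots comes from $F$, which after symmetrizing is $\sym_{i+1}^n(G)$.

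The main obstacle I anticipate is the bookkeeping in identifying the kernel of $\sym_i^n(G)\to\sym^i(F)\tensor\sym^{n-i}(H)$ with $\sym_{i+1}^n(G)$ on the nose, rather than with some a priori larger or smaller subobject: one must check that symmetrizing the ``one of the last slots is in $F$'' subobject lands in, and surjects onto, $\sym_{i+1}^n(G)$, i.e. that the symmetrizer $s^n_G$ does not see the distinction between ``an $F$ among the first $i$ slots'' and ``an $F$ among the last $n-i$ slots.'' This is where $\Q$-linearity (dividing by $n!$, $i!$, $(n-i)!$, and the relevant binomial coefficients) and exactness of $\tensor$ are both essential, and where one should be careful that the inclusion $\sym_{i+1}^n(G)\hookrightarrow\sym_i^n(G)$ from the definition is the same map that arises as the kernel here. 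Everything else — surjectivity, compatibility with idempotents, the direct-sum decomposition of $\sym^n$ of a ``split'' object — is formal once exactness of $\tensor$ is in hand, which by hypothesis it is (and which, for the intended application to $\ShT(k,\Q)$, is supplied by Proposition~\ref{flat}).
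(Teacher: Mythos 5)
There is a genuine gap, and it sits exactly where you flag the ``main obstacle'': the identification of $\sym^n_i(G)/\sym^n_{i+1}(G)$ with $\sym^i(F)\tensor\sym^{n-i}(H)$ on the nose is the whole content of the lemma, and the mechanisms you propose do not establish it. First, the step ``push the sequence $0\to F^{\tensor(i+1)}\tensor G^{\tensor(n-i-1)}\to F^{\tensor i}\tensor G^{\tensor(n-i)}\to F^{\tensor i}\tensor H\tensor G^{\tensor(n-i-1)}\to 0$ forward along $s^n_G$'' is not legitimate: that sequence is not $\Sigma_n$-equivariant (its third term is not a subquotient of $G^{\tensor n}$ at all), so the exactness of the symmetrizing idempotent does not apply to it; what you are actually doing is taking images under $s^n_G\circ(f^{\tensor i}\tensor\mathrm{id}_G^{\tensor(n-i)})$, and taking images is not exact, so a priori the quotient $\sym^n_i(G)/\sym^n_{i+1}(G)$ could be a proper quotient of the expected object (``extra collapsing''). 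Second, even the existence of your comparison map is unproved: $\sym^n_i(G)$ is defined as an image, and to descend $(s^i_F\tensor s^{n-i}_H)\circ(\mathrm{id}_F^{\tensor i}\tensor\pi^{\tensor(n-i)})$ to that image you must show it kills $\ker\bigl(s^n_G\circ(f^{\tensor i}\tensor\mathrm{id}_G^{\tensor(n-i)})\bigr)$; ``functoriality of $s^n$'' does not give this, because your map treats the first $i$ and last $n-i$ slots asymmetrically while $s^n_G$ mixes all $n$ slots, and in a general abelian tensor category one cannot argue with elements. Your base case $i=0$ is fine (there everything is genuinely $\Sigma_n$-equivariant), but ``iterating the $i=0$ argument in the last $n-i$ slots'' is precisely the unproved assertion, and an induction on $i$ does not help since each graded piece must be computed independently.

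The paper's proof supplies the missing device. It introduces the $\Sigma_n$-stable filtration $\ml{F}^i_F(G^{\tensor n})={\rm Im}\bigl(\bigoplus_{j_1+\cdots+j_n=i}\ml{F}^{j_1}_F(G)\tensor\cdots\tensor\ml{F}^{j_n}_F(G)\to G^{\tensor n}\bigr)$ (sum over all placements of $i$ copies of $F$), so that the sequence $0\to\ml{F}^{i+1}\to\ml{F}^i\to gr^i_F\to 0$ is honestly $\Sigma_n$-equivariant; applying the (exact, by $\Q$-linearity) idempotent then gives $0\to\sym^n_{i+1}(G)\to\sym^n_i(G)\to s^n_G(gr^i_F(G^{\tensor n}))\to 0$ with no well-definedness issue. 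The real work, which replaces your hand-waved step, is the computation $gr^i_F(G^{\tensor n})\simeq\bigoplus_{j_1+\cdots+j_n=i}\ml{F}^{j_1}_F(H)\tensor\cdots\tensor\ml{F}^{j_n}_F(H)$, proved by induction on $n$ via a snake-lemma diagram in which the exactness of $\tensor$ furnishes the injectivity of the vertical maps; symmetrizing this direct sum then yields $\sym^i(F)\tensor\sym^{n-i}(H)$. To repair your write-up you would need to either reproduce this filtration-plus-induction-on-$n$ argument or find an equivalent substitute for it; as it stands, the central identification is assumed rather than proved.
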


\newcommand{\F}{\ml{F}}
We prepare some notation to prove this lemma.
For a map $F\to G$ in $\ml{A}$, let $\F^i_F(G)$ denote
\[
\F_F^i(G)=
\begin{cases}
G & \text{if}\ \  i=0\\
F & \text{if}\ \  i=1\\
0 & \text{otherwise}
\end{cases}
\]

For $n>1$, we define a filtration $\F_F^i(G^{\tensor n})$ on $G^{\tensor n}$ as follows:
\[
\F_F^i(G^{\tensor n}):={\rm Im}\bigl(\bigoplus _{j_1+\cdots +j_n=i} \F_F^{j_1}(G)\tensor \cdots \F_F^{j_n}(G)\to G^{\tensor n}\bigr).
\]
By definition, there is an inclusion $\F_F^{i+1}(G^{\tensor n})\to \F_F^i(G^{\tensor n})$.
We write $gr_F^i(G^{\tensor n})$ for the quotient 
\[
gr_F^i(G^{\tensor n}):=\F_F^i(G^{\tensor n})/\F_F^{i+1}(G^{\tensor n}).
\]
Since the inclusion is $\Sigma_i$-equivariant, we have an exact sequence
\[
0\to \sym_{i+1}^n(G) \to \sym^n_i(G)\to s^n_G(gr^i_F(G^{\tensor n})) \to 0.
\]

\renewcommand{\proofname}{\it Proof of Lemma \ref{skey}}
\begin{proof}
Let $0 \to F\to G\to H \to 0$ be an exact sequence in $\ml{A}$.
Now it suffices to show that 
\[
gr^i_F(G^{\tensor n})\simeq \bigoplus_{j_1+\cdots +j_n=i}\F_F^{j_1}(H)\tensor \cdots \tensor \F_F^{j_n}(H) \subset (F\oplus H)^{\tensor n},
\]
since $\sym^i(F)\tensor\sym^{n-i}(H)$ is the image of $\displaystyle \bigoplus_{j_1+\cdots +j_n=i}\F_F^{j_1}(H)\tensor \cdots \tensor \F_F^{j_n}(H)$ under the projection $s_{F\oplus H}^n:(F\oplus H)^{\tensor n}\to \sym^n(F\oplus H)$.
Note that $\displaystyle \bigoplus_{j_1+\cdots +j_n=i}\F_F^{j_1}(H)\tensor \cdots \tensor \F_F^{j_n}(H)$ is a direct sum of all tensor products of $i$ copies of $F$ and $n-i$ copies of $H$. 

This is proved by induction on $n$.
It is clear in case $n=1$ by the definition of the filtration.
It is also clear in case $i=0$.

Consider the following commutative diagram with exact rows:
\begin{align*}
\xymatrix{
0 \to \F_F^{i+1}(G^{\tensor n})\tensor F \ar[r]\ar@{^{(}->}[d] & \F_F^i(G^{\tensor n}) \tensor F\ar[r] \ar@{^{(}->}[d] &gr_F^i(G^{\tensor n})\tensor F\ar@{^{(}->}[d] \to 0\\
0 \to \F_F^{i+1}(G^{\tensor n}) \tensor G\ar[r] & \F_F^i(G^{\tensor n})\tensor G \ar[r]  & gr_F^i(G^{\tensor n})\tensor G \to 0
}
\end{align*}
Here we used the exactness of the tensor $\tensor$ to have injections in the diagram.
By the above diagram, we have
\[
\F_F^{i+1}(G^{\tensor n})\tensor F=\ker \bigl( \bigl(\F_F^i(G^{\tensor n})\tensor F\bigr)\oplus \bigl(\F_F^{i+1}(G^{\tensor n})\tensor G \bigr) \to \F_F^{i+1}(G^{\tensor n})\tensor G \bigr).
\]
From this equality and the commutative diagram
\begin{align*}
\xymatrix{
\bigl(\F_F^i(G^{\tensor n})\tensor F\bigr)\oplus \bigl(\F_F^{i+1}(G^{\tensor n})\tensor G \bigr)\ar@{->>}[r]\ar[d] &\F_F^{i+1}(G^{\tensor n+1})\ar@{^{(}->}[d]\\
\F_F(G^{\tensor n})\tensor G \ar@{^{(}->}[r]&G^{\tensor n+1},
}
\end{align*}
we obtain the following commutative diagram with exact rows: for $i\geq 1$, 
\begin{align*}
\xymatrix{
0 \to \F_F^{i+1}(G^{\tensor n})\tensor F \ar[r]\ar@{^{(}->}[d] & \bigl(\F_F^i(G^{\tensor n})\tensor F\bigr)\oplus \bigl(\F_F^{i+1}(G^{\tensor n})\tensor G \bigr)\ar[r] \ar@{^{(}->}[d] &\F_F^{i+1}(G^{\tensor n+1})\ar@{^{(}->}[d] \to 0\\
0 \to \F_F^i(G^{\tensor n})\tensor F \ar[r] &\bigl(\F_F^{i-1}(G^{\tensor n})\tensor F\bigr)\oplus \bigl(\F_F^{i}(G^{\tensor n})\tensor G\bigr) \ar[r]  & \F_F^i(G^{\tensor n+1}) \to 0.
}
\end{align*}
The injectivity of some maps in this diagram follows from the exactness of the tensor.
By snake lemma and the induction hypothesis, we have an exact sequence
\begin{align*}
0&\to \bigoplus_{j_1+\cdots +j_n=i}\F_F^{j_1}(H)\tensor \cdots \tensor \F_F^{j_n}(H) \tensor F\\
&\to \bigl(\bigoplus_{j_1+\cdots +j_n=i-1}\F_F^{j_1}(H)\tensor \cdots \tensor \F_F^{j_n}(H) \tensor F\bigr)
\oplus \bigl(\bigoplus_{j_1+\cdots +j_n=i}\F_F^{j_1}(H)\tensor \cdots \tensor \F_F^{j_n}(H)\tensor G\bigr)\\
& \to gr_F^i(G^{\tensor n+1}) \to 0.
\end{align*}
Therefore we have the desired isomorphism
\begin{align*}
gr_F^i(G^{\tensor n+1})&\simeq \bigl(\bigoplus_{j_1+\cdots +j_n=i-1}\F_F^{j_1}(H)\tensor \cdots \tensor \F_F^{j_n}(H) \tensor F\bigr)\\
&\qquad \oplus \bigl(\bigoplus_{j_1+\cdots +j_n=i}\F_F^{j_1}(H)\tensor \cdots \tensor \F_F^{j_n}(H)\tensor H\bigr)\\
&=\bigoplus_{j_1+\cdots +j_{n+1}=i}\F_F^{j_1}(H)\tensor \cdots \tensor \F_F^{j_{n+1}}(H).
\end{align*}
\end{proof}


\section{Remarks on tensor product on $\ShT(k,\Q)$}

We here make two remarks about the tensor product $\tensor_{\rm ShT}$ on $\ShT(k,\Q)$
One is that the tensor product is exact (Proposition \ref{flat}).
A key of a proof of the exactness are results of Suslin--Voevodsky \cite{SV0,SV} and Cisinski--D\'eglise \cite{CD}.
Another is to give a example of \'etale sheaves with transfers for which \'etale sheafification of presheaf tensor product is not isomorphic to the tensor product $\tensor_{\rm ShT}$. 

Let us fix notation:
\begin{itemize}
\item 
${\bf \ml{S}ch}/k$ : the category of separated schemes of finite type over $k$;

\item
${\bf \ml{S}m}/k$ : the subcategory of ${\bf \ml{S}ch}/k$ of smooth $k$-schemes;

\item
$\PST(k,\Q)$ : the category of presheaves on ${\bf \ml{S}m}/k$ of $\Q$-modules with transfers;

\item
$\ShT(k,\Q)$ : the category of Nisnevich sheaves on ${\bf \ml{S}m}/k$ of $\Q$-modules with transfers.

\item
$\underline{\bf Sh}_{\rm Nis}(k,\Q)$ : the category of Nisnevich sheaves on ${\bf\ml{S}ch}/k$ of $\Q$-modules;

\item
$\nis$ : the category of Nisnevich sheaves on ${\bf\ml{S}ch}/k$ of $\Q$-modules with transfers;

\item
$\qfh$ : the category of qfh-sheaves on ${\bf\ml{S}ch}/k$ of $\Q$-modules. 
\end{itemize}

\subsection{Exactness of the tensor product on $\ShT(k,\Q)$}

We recall the tensor product on $\PST$:
For $X\in {\bf\ml{S}m}/k$, $L(X)$ denotes the representable presheaf with transfers. 
We first define the tensor product $L(X)\tensor_{{\rm PST}}L(Y)$ as
\[
L(X)\tensor_{{\rm PST}}L(Y):=L(X\times Y).
\]
Let $F$ be presheaf with transfers. 
We have a canonical projective resolution $\ml{L}(F)\to F$ of $F$ of the following form
\begin{align*}
\cdots &\to \bigoplus_j L(Y_j) \to \bigoplus_i L(X_i) \to F\to 0
\end{align*}
Then for presheaves with transfers $F$ and $G$, the tensor product $F\tensor_{{\rm PST}}G$ of $F$ and $G$ is defined to be
\[
H_0\bigl({\rm Tot}(\ml{L}(F)\tensor_{\rm PST} \ml{L}(G))\bigr).
\]
For Nisnevich sheaves with transfers $F$ and $G$, the tensor product $F\tensor_{{\rm ShT}}G$ is given by 
\[
F\tensor_{{\rm ShT}}G=\bigl(F\tensor_{{\rm PST}}G\bigr)_{\rm Nis}.
\]


\begin{prop}\label{flat}
The bifunctor $\tensor_{\rm ShT}$ on $\ShT(k,\Q)$ is exact.
\end{prop}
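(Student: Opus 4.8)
The plan is to reduce the exactness of $\tensor_{\mathrm{ShT}}$ to a statement about Nisnevich sheaves (without transfers) and then, ultimately, to qfh-sheaves where the tensor product is known to behave well rationally. First I would observe that the forgetful functor $\ShT(k,\Q) \to \underline{\mathbf{Sh}}_{\mathrm{Nis}}(k,\Q)$ is faithful and exact, and that $\tensor_{\mathrm{ShT}}$ is characterized by the property that it is the left adjoint part of a closed symmetric monoidal structure, so it is automatically right exact; the content of the proposition is \emph{left} exactness. For this, the key reduction is that a monomorphism $F \hookrightarrow F'$ in $\ShT(k,\Q)$ stays a monomorphism after tensoring with any $G$. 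Using the projective resolutions $\ml{L}(F) \to F$ by sums of representables $L(X)$, and the fact that $L(X)\tensor_{\mathrm{PST}} L(Y) = L(X\times Y)$, it suffices to control the higher Tor-type terms, i.e. to show that $L(X)$ is flat for $\tensor_{\mathrm{ShT}}$, or more precisely that the relevant $\mathrm{Tor}_1$ sheaves vanish.

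The main idea, following the hint in the excerpt, is to pass to qfh-sheaves. By Suslin--Voevodsky \cite{SV0,SV} and Cisinski--D\'eglise \cite{CD}, with $\Q$-coefficients the category $\ShT(k,\Q)$ of Nisnevich sheaves with transfers is equivalent to the category $\qfh$ of qfh-sheaves of $\Q$-modules on $\mathbf{\ml{S}ch}/k$ (this is where rational coefficients and the comparison results are essential, and why we restrict to $\Q$), and this equivalence is monoidal: the representable transfer sheaf $L(X)$ corresponds to the free qfh-sheaf on $X$, and the tensor products match up. So I would transport the problem: it is enough to show that the tensor product on $\qfh$ is exact. In the qfh-topology every representable sheaf $\Q_{\mathrm{qfh}}(X)$ becomes, after rationalization, a direct summand-controlled object — more precisely one uses that qfh-locally any $X$ is covered by a normal (even regular) scheme and that $\Q_{\mathrm{qfh}}(X)$ is a filtered colimit / retract pattern making it flat. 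The exactness of the tensor product of sheaves of $\Q$-modules for a topology where enough representables are flat is then formal: filtered colimits and finite direct sums of flat objects are flat, arbitrary sheaves are quotients of sums of representables, and sheafification is exact.

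Concretely, the steps in order: (1) record that $\tensor_{\mathrm{ShT}}$ is right exact and that it suffices to prove preservation of injections, equivalently flatness of representables $L(X)$; (2) invoke the monoidal equivalence $\ShT(k,\Q) \simeq \qfh$ from \cite{SV0,SV,CD} to move the question to qfh-sheaves; (3) prove that the free qfh-sheaf $\Q_{\mathrm{qfh}}(X)$ is flat, using that qfh-covers by normal schemes together with \cite[\S 4]{SV} give a description of $\Q_{\mathrm{qfh}}(X)$ controllable enough (e.g. via the $\Q$-linearized representable presheaf whose qfh-sheafification is exact) to yield flatness; (4) conclude by the general nonsense that, over a site, the tensor product of sheaves of modules is exact once enough representables are flat and sheafification is exact. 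The main obstacle I expect is step (3): establishing flatness of the qfh-representables really requires the rational structure of qfh-sheaves of normal schemes from Suslin--Voevodsky, and one must be careful that the equivalence in step (2) is genuinely symmetric monoidal and matches the two tensor products (presheaf-tensor-then-sheafify on each side) rather than merely being an equivalence of abelian categories. Once flatness of representables is in hand, the rest is routine homological algebra on a site.
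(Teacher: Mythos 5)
You take the same overall route as the paper --- transporting the question from $\ShT(k,\Q)$ to qfh-sheaves, where the tensor product is exact --- but the load-bearing step of that route is assumed rather than proved, and the step you single out as the main difficulty is not where the difficulty actually lies. What the cited results give (and what the paper records as Proposition \ref{cdp}, from Cisinski--D\'eglise) is not a monoidal equivalence $\ShT(k,\Q)\simeq\qfh$ but a fully faithful, exact, colimit-preserving functor $\phi:\ShT(k,\Q)\to\qfh$ with $\phi(L(X)_{\Q})\simeq\Q_{\rm qfh}(X)$. That $\phi$ is in addition a \emph{tensor} functor --- i.e.\ that the transfer tensor product $\tensor_{\rm ShT}$, which is a convolution-type product built from finite correspondences and not a sectionwise tensor, is carried to the ordinary tensor product of qfh-sheaves --- is precisely the nontrivial content, and it is what the paper actually proves: first on representables, using $L(X)_{\Q}\tensor_{\rm ShT}L(Y)_{\Q}=L(X\times Y)_{\Q}$ and $\Q_{\rm qfh}(X)\tensor\Q_{\rm qfh}(Y)=\Q_{\rm qfh}(X\times Y)$, and then for general sheaves via the canonical resolutions together with exactness and colimit preservation of $\phi$. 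You explicitly flag this compatibility as something to ``be careful'' about, but you give no argument for it; without it the reduction to $\qfh$ does not start. The delicacy is genuine: the comparison of the transfer tensor with naive expectations can fail (this is the point of the example in Proposition \ref{uniso}), so the matching of the two tensor products cannot be taken for granted from an equivalence-of-abelian-categories statement, which in any case is stronger than what the references provide (full faithfulness of $\phi$ suffices and is all the paper uses).

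Conversely, your step (3) --- flatness of $\Q_{\rm qfh}(X)$ via qfh-covers by normal schemes --- is unnecessary: for sheaves of $\Q$-modules on any site the tensor product is the sheafification of the sectionwise tensor of $\Q$-vector spaces, hence exact, because tensoring over a field is exact and sheafification is exact; this elementary fact is all the paper needs about $\qfh$. Moreover, your proposed reduction ``it suffices to show the representables $L(X)$ are flat'' is not valid for $\tensor_{\rm ShT}$: one needs $-\tensor_{\rm ShT}G$ to preserve monomorphisms for \emph{arbitrary} $G$, i.e.\ flatness of all objects, and flatness of representables does not formally propagate to their quotients or general colimits. So even granting step (3), step (4) would not close the argument. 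Your step (1) (right exactness, as in Remark \ref{rmkt}, so that only left exactness is at issue) is fine; the gap is that the monoidality of the comparison functor --- the actual crux --- is left unproved, while the work you do plan is either superfluous or insufficient.
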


\begin{rmk}\label{rmkt}
(1) The tensor product on $\ShT(k,\Z)$ is right exact, since internal Hom objects exist in $\ShT(k,\Z)$.

(2) If F is a presheaf of $\Q$-modules with transfers,  then $F_{\rm Nis}=F_{\text{\'et}}$.
Therefore, Proposition \ref{flat} holds for $\boldsymbol{\rm ShT}_{\text{\'et}}(k,\Q)$.
\end{rmk}

From Proposition \ref{flat} and the definition of tensor product on the derived category $D^{-}(\ShT(k,\Q))$ of $\ShT(k,\Q)$, we obtain the following:

\begin{cor}\label{fm}
Let $F$ and $G$ be Nisnevich sheaves of $\Q$-modules with transfers.
Let $[F]_0$ denote the complex consisting of $F$ concentrated in degree zero.
Then 

{\rm (1)} $[F]_0\tensor [G]_0$ is isomorphic to $[F\tensor_{{\rm ShT}}G]_0$ in $D^{-}(\ShT(k,\Q))$.

{\rm (2)} $\sym^i([F]_0)$ is isomorphic to $[\sym^i(F)]_0$ in $D^{-}(\ShT(k,\Q))$.
\end{cor}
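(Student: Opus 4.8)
The plan is to deduce both statements directly from Proposition \ref{flat}, i.e.\ from the exactness of $\tensor_{\rm ShT}$, by the usual comparison between the derived tensor product and the underived one on complexes concentrated in a single degree. First I would recall how $\tensor$ is defined on $D^{-}(\ShT(k,\Q))$: one replaces each complex by a complex of (direct sums of representable) flat objects, forms the total complex of the double complex of underived $\tensor_{\rm ShT}$, and this computes $\tensor^{\mathbf{L}}_{\rm ShT}$. Since representables $L(X)$ are flat for $\tensor_{\rm ShT}$ — this is built into the construction of the tensor product on $\PST$ via $L(X)\tensor_{\rm PST}L(Y)=L(X\times Y)$, together with sheafification — the point is simply that when $\tensor_{\rm ShT}$ is already exact, \emph{every} object is acyclic for it, so no resolution is needed.

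For part (1): take a flat (representable) resolution $P_\bullet\to F$. Then $[F]_0\tensor[G]_0$ is represented by $\mathrm{Tot}(P_\bullet\tensor_{\rm ShT}[G]_0)=P_\bullet\tensor_{\rm ShT}G$. Because $\tensor_{\rm ShT}(-)\tensor_{\rm ShT}G$ is exact by Proposition \ref{flat}, the augmentation $P_\bullet\tensor_{\rm ShT}G\to F\tensor_{\rm ShT}G$ is a quasi-isomorphism (the homology of $P_\bullet\tensor_{\rm ShT}G$ in positive degrees vanishes and $H_0=F\tensor_{\rm ShT}G$). Hence $[F]_0\tensor[G]_0\simeq[F\tensor_{\rm ShT}G]_0$ in $D^{-}(\ShT(k,\Q))$. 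One should check that this quasi-isomorphism is the natural one, i.e.\ compatible with the symmetry and associativity constraints, so that it is an isomorphism of functors; this follows from functoriality of the resolution and the coherence of the monoidal structure, but it is worth a sentence.

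For part (2): note that $\sym^i$ is defined as the image of the idempotent $s^i_{\M}$ on $M^{\tensor i}$, and by part (1) (iterated $i-1$ times, with the associativity constraints) we have $[F]_0^{\tensor i}\simeq[F^{\tensor_{\rm ShT}i}]_0$, compatibly with the $\Sigma_i$-action induced by the symmetry isomorphisms. Since $\ShT(k,\Q)$ is $\Q$-linear and abelian, $s^i$ on $F^{\tensor_{\rm ShT}i}$ is an honest idempotent endomorphism and its image is $\sym^i(F)$ computed in the abelian category; splitting the idempotent in $D^{-}(\ShT(k,\Q))$ (where it is split because the abelian category already splits it) gives $\sym^i([F]_0)\simeq[\sym^i(F)]_0$. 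The main obstacle I anticipate is purely bookkeeping: verifying that the flatness of representables for $\tensor_{\rm ShT}$ is genuinely available (it is the content of how the $\PST$-tensor product is set up, together with exactness of Nisnevich sheafification on $\Q$-modules) and that the comparison isomorphism of part (1) is natural enough to intertwine the $\Sigma_i$-actions used to define $\sym^i$. Once that compatibility is in hand, part (2) is immediate.
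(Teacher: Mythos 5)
Your proposal is correct and follows essentially the same route as the paper, which deduces the corollary directly from the exactness of $\tensor_{\rm ShT}$ (Proposition \ref{flat}) together with the definition of the tensor product on $D^{-}(\ShT(k,\Q))$ via resolutions by representable sheaves; your expansion of the quasi-isomorphism argument and the idempotent-splitting step for $\sym^i$ is exactly the intended (and in the paper, unwritten) verification.
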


We prepare some lemmas and recall results of Suslin--Voevodsky \cite[Corollary 6.6, Theorem 6.7]{SV0}\cite[Theorem 4.2.12]{SV} (cf. Cisinski--D\'eglise \cite[Theorem 10.5.5]{CD}) to prove Proposition \ref{flat}.

\begin{thm}
Let $X\in{\bf\ml{S}ch}/k$ be a separated scheme of finite type over $k$.
Let $\underline{L}(X)_{\Q}\in \nis$ denote the Nisnevich sheaf with transfers represented by $X$.
Then $\underline{L}(X)_{\Q}$ is a qfh-sheaf.

Furthermore, let $\Q_{\rm qfh}(X)\in \qfh$ denote the qfh-sheaf represented by $X$.
Then $\underline{L}(X)_{\Q}$ is isomorphic to the qfh-sheaf $\Q_{\rm qfh}(X)$.
\end{thm}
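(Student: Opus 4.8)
The plan is to combine the structure theory of qfh-covers with the norm (transfer) that becomes available on finite correspondences once coefficients are rational. Since $\underline{L}(X)_\Q$ is by construction a Nisnevich sheaf, and the qfh-topology on ${\bf\ml{S}ch}/k$ is generated by the Nisnevich topology together with finite surjective morphisms — every qfh-cover is refined, Nisnevich-locally on the target, by a composition of a Nisnevich cover with a finite surjective morphism, and the normalization of a reduced finite-type $k$-scheme is itself a qfh-cover — it suffices to establish two things: that $\underline{L}(X)_\Q$ satisfies descent along an arbitrary finite surjective morphism, and that the resulting qfh-sheaf is $\Q_{\rm qfh}(X)$.

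For the descent statement, after passing to connected components and normalizing (both qfh-operations) we may assume the base $Z$ of a finite surjective $p\colon Y\to Z$ is normal and connected, so $p$ has a well-defined degree $d>0$. The group $\underline{L}(X)_\Q(Z)=\Cor(Z,X)\tensor\Q$ carries a transfer map $\mathrm{tr}_p\colon\Cor(Y,X)\tensor\Q\to\Cor(Z,X)\tensor\Q$ built from the transpose of the graph of $p$, viewed as a finite correspondence $Z\to Y$, and $\mathrm{tr}_p\circ p^{*}$ is multiplication by $d$, hence invertible over $\Q$. This shows $p^{*}$ is a split injection; what remains is that its image is exactly the equalizer of the two restrictions $\Cor(Y,X)\tensor\Q\rightrightarrows\Cor(Y\times_Z Y,X)\tensor\Q$. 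This is precisely Suslin--Voevodsky's descent theorem for relative cycles in the qfh-topology \cite[Theorem~4.2.12]{SV} (see also \cite[Theorem~10.5.5]{CD}): over a geometrically unibranch base, a relative cycle on $Y$ whose two pullbacks to $Y\times_Z Y$ coincide descends — after inverting the residue characteristics, i.e.\ after $\tensor\Q$ — to a relative cycle on $Z$. I expect this to be the main obstacle; it is also the point where rationality of coefficients is essential, since for integral coefficients or non-normal bases there are counterexamples, cf.\ Remark~\ref{rmkt} and the example in Appendix~B.

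For the second assertion, the canonical morphism of presheaves with transfers $L(X)\to\underline{L}(X)_\Q$ sheafifies — the target being now a qfh-sheaf — to a morphism of qfh-sheaves $\alpha\colon\Q_{\rm qfh}(X)\to\underline{L}(X)_\Q$. To show $\alpha$ is an isomorphism it is enough, since every scheme in ${\bf\ml{S}ch}/k$ is qfh-covered by a disjoint union of normal connected schemes, to check that $\alpha$ is bijective on sections over a normal connected $Z$. There $\underline{L}(X)_\Q(Z)=\Cor(Z,X)\tensor\Q$ is the free $\Q$-vector space on the integral closed subschemes of $Z\times X$ finite and surjective over $Z$; taking the generic point of $Z$ identifies this with the $\Q$-vector space of closed points of $X_{k(Z)}$ with multiplicities, which is exactly the computation of $\Q_{\rm qfh}(X)(Z)$ carried out by Suslin--Voevodsky \cite[Corollary~6.6, Theorem~6.7]{SV0}. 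Verifying that these identifications are compatible with restriction maps then yields that $\alpha$ is an isomorphism of qfh-sheaves, so $\underline{L}(X)_\Q\simeq\Q_{\rm qfh}(X)$.
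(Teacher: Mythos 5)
The paper itself does not prove this statement: it is recalled from Suslin--Voevodsky \cite[Corollary 6.6, Theorem 6.7]{SV0}, \cite[Theorem 4.2.12]{SV} (cf.\ Cisinski--D\'eglise \cite[Theorem 10.5.5]{CD}) and then used as a black box for Proposition \ref{flat}. Your proposal is, in substance, the same treatment: the two steps that carry all the weight --- surjectivity of $p^*$ onto the equalizer for a finite surjective $p$ (qfh-descent for rational relative cycles) and the computation of the rational qfh-sheafification $\Q_{\rm qfh}(X)$ over normal bases --- are deferred to exactly the results the paper cites, so what you add is a reduction wrapper (structure of qfh-coverings, split injectivity of $p^*$ via the correspondence ${}^t\Gamma_p$ and $\mathrm{tr}_p\circ p^*=\deg(p)$). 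That wrapper is the standard one, and the transfer argument is fine for $Z$ normal and connected.

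Two points in the wrapper do need repair. First, ``after normalizing we may assume $Z$ normal'' is not free: to use the normalization $Z'\to Z$ as a qfh-covering you need the sheaf condition of $\Cor(-,X)\tensor\Q$ for that very covering, whose base is the non-normal $Z$; this is part of what is being proved, and it is precisely why Suslin--Voevodsky and \cite[\S 10.5]{CD} work with the relative-cycle presheaf $c_{\rm equi}(X/-,0)$ over arbitrary bases instead of reducing to normal ones --- your appeal to \cite[Theorem 4.2.12]{SV} for (geometrically unibranch) bases does not cover this covering. Relatedly, the qfh-topology is generated by the \emph{\'etale} topology together with finite surjective morphisms; your ``Nisnevich plus finite surjective'' is legitimate here only because a Nisnevich sheaf of $\Q$-modules with transfers is automatically an \'etale sheaf (Remark \ref{rmkt}), which should be said. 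Second, the identification of $\Cor(Z,X)\tensor\Q$ with the free $\Q$-vector space on the closed points of $X_{k(Z)}$ is false: restriction to the generic fibre is injective, but a closed point of $X_{k(Z)}$ gives a finite correspondence only when its closure in $Z\times X$ is finite (not merely quasi-finite) over $Z$; for $Z=X=\mb{A}^1_k$ the point $x=1/t$ of $X_{k(t)}$ has closure the hyperbola $xt=1$, which is not finite over $Z$. Suslin--Voevodsky's computation of $\Q_{\rm qfh}(X)$ over a normal connected $Z$ is in terms of cycles on $Z\times X$ finite and surjective over $Z$, so the intended conclusion stands, but as written this step misquotes the result it appeals to.
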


By this theorem,  for any qfh-sheaf $F \in \qfh$, Cisinski--D\'eglise \cite[\S 10]{CD} defined a Nisnevich sheaf with transfers $\rho(F)$ as follows: for any $X\in {\bf\ml{S}ch}/k$,
\[
\rho(F)(X):= {\rm Hom}_{{\rm Sh}_{\rm qfh}(k,\Q)}(\underline{L}(X)_{\Q}, F).
\]
Then we have a functor
\[
\rho : \qfh \longrightarrow \nis.
\]
Now we have a functor
\[
\psi :\qfh \stackrel{\rho}{\to} \nis \stackrel{\varphi^*}{\to} \ShT(k,\Q).
\]
where the second functor is the pull-back with respect to $\varphi : {\bf\ml{S}m}/k \to {\bf\ml{S}ch}/k$.

We also have a functor 
\[
\phi : \ShT(k,\Q) \to \nis \to \underline{\bf Sh}_{\rm Nis}(k,\Q) \longrightarrow \qfh
\]
where the first functor is the left adjoint of $\varphi^*$, the second one is the forgetful functor and the third one is the qfh-sheafification.
Cisinski-D\'eglise showed the following properties of functors $\psi, \phi$:

\begin{prop}[\text{\cite[Proposition 10.5.14]{CD}}]\label{cdp}
The following conditions are true:
\begin{enumerate}[{\rm (1)}]
\item For any smooth $k$- scheme $X$, $\psi (\Q_{\rm qfh}(X))\simeq L(X)_{\Q}${\rm ;}

\item The functor $\psi$ admits a left adjoint $\phi${\rm ;}

\item For any smooth $k$-scheme $X$, $\phi (L(X)_{\Q})\simeq \Q_{\rm qfh}(X)${\rm ;}

\item The functor $\psi$ is exact and preserves colimits{\rm ;}

\item The functor $\phi$ is exact, fully faithful and preserves colimits
\end{enumerate}

\end{prop}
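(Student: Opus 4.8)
The plan is to establish the five assertions of Proposition~\ref{cdp} about the adjoint pair $(\phi,\psi)$ by building everything on the identification $\underline{L}(X)_{\Q}\simeq \Q_{\rm qfh}(X)$ supplied by the preceding theorem, and then transporting properties across the adjunctions and sheafification functors that make up $\psi$ and $\phi$. First I would prove (1): by definition $\psi=\varphi^*\circ\rho$, so I compute $\rho(\Q_{\rm qfh}(X))(U)={\rm Hom}_{\qfh}(\underline{L}(U)_{\Q},\Q_{\rm qfh}(X))$ for $U\in{\bf\ml{S}ch}/k$, and the quoted theorem identifies $\underline{L}(U)_{\Q}$ with $\Q_{\rm qfh}(U)$, so this Hom computes the representable qfh-sheaf evaluated on $U$, giving $\rho(\Q_{\rm qfh}(X))\simeq\underline{L}(X)_{\Q}$; restricting along $\varphi$ to ${\bf\ml{S}m}/k$ yields $\psi(\Q_{\rm qfh}(X))\simeq L(X)_{\Q}$. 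For (2), I would construct $\phi$ as the composite displayed in the excerpt (left adjoint of $\varphi^*$, then forgetful, then qfh-sheafification) and verify it is left adjoint to $\psi=\varphi^*\circ\rho$ by checking that $\rho$ is right adjoint to qfh-sheafification-after-forgetting; since each constituent functor in $\phi$ is a left adjoint of the corresponding constituent of $\psi$, the composite is a left adjoint, and adjunction is stable under composition.

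Next I would treat (3) as the ``representable'' compatibility dual to (1): on the representable objects $L(X)_{\Q}$, the forgetful functor sends $L(X)_{\Q}$ to the underlying Nisnevich sheaf, and qfh-sheafification of that underlying sheaf is $\Q_{\rm qfh}(X)$ by the quoted theorem, so $\phi(L(X)_{\Q})\simeq\Q_{\rm qfh}(X)$. Alternatively, (3) follows formally from (1) together with the Yoneda/adjunction unit: since $\psi$ is fully faithful on representables by (1), the counit of $(\phi,\psi)$ on $\Q_{\rm qfh}(X)$ is an isomorphism, which yields (3). For (4), exactness of $\psi$ reduces to exactness of $\rho$ and of $\varphi^*$; the pull-back $\varphi^*$ along $\varphi:{\bf\ml{S}m}/k\to{\bf\ml{S}ch}/k$ is exact because it is computed on stalks, and $\rho$ is exact because qfh-cohomology with $\Q$-coefficients vanishes in the relevant range so that ${\rm Hom}_{\qfh}(\underline{L}(-)_{\Q},-)$ is exact on the category of qfh-sheaves of $\Q$-modules. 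Preservation of colimits by $\psi$ follows since both $\rho$ and $\varphi^*$ commute with filtered colimits and direct sums, which one checks objectwise using that $\underline{L}(X)_{\Q}$ is a compact generator in the qfh-topology with $\Q$-coefficients.

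Finally, for (5) I would deduce exactness, full faithfulness, and colimit-preservation of $\phi$. Preservation of colimits is automatic from (2), since $\phi$ is a left adjoint. Exactness of $\phi$ then reduces, given right-exactness, to left-exactness, which I would obtain from the exactness of its constituent functors with $\Q$-coefficients: the free-sheaf-with-transfers functor and qfh-sheafification are exact after rationalization. Full faithfulness of $\phi$ is equivalent to the unit ${\rm id}\to\psi\phi$ being an isomorphism; I would verify this first on the generators $L(X)_{\Q}$ using (1) and (3), where $\psi\phi(L(X)_{\Q})\simeq\psi(\Q_{\rm qfh}(X))\simeq L(X)_{\Q}$ compatibly with the unit, and then extend to all of $\ShT(k,\Q)$ by writing an arbitrary object as a colimit of representables and invoking colimit-preservation of both $\phi$ and $\psi$. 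The main obstacle I anticipate is the exactness in (4)--(5): it rests on the nontrivial Suslin--Voevodsky comparison between qfh-sheaves and Nisnevich sheaves with transfers in the rational setting (the preceding theorem and the cited results of \cite{SV0,SV,CD}), and on the vanishing of higher qfh-cohomology with $\Q$-coefficients that makes $\rho$ and qfh-sheafification exact; I would isolate these inputs as the crux and cite them rather than reprove them.
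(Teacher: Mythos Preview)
The paper does not prove this proposition at all: it is stated with attribution \cite[Proposition 10.5.14]{CD} and invoked as a black box from Cisinski--D\'eglise, with no argument supplied. So there is no ``paper's own proof'' to compare your proposal against; your sketch is an attempt to reconstruct what \cite{CD} does, which is outside the scope of this paper.

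That said, a brief comment on your sketch itself. The overall architecture is sound---reduce to representables via the preceding theorem, use adjunction to pass properties back and forth, and check the unit on generators to get full faithfulness of $\phi$. The point that deserves more care is your claim in (4) that $\psi$ preserves \emph{all} colimits: a right adjoint has no formal reason to preserve colimits, and ``$\underline{L}(X)_{\Q}$ is compact'' only gives filtered colimits; you would need a separate argument for arbitrary ones (or to observe that in this setting finite colimits plus filtered colimits suffice, using exactness). Also, in (5) your argument for left-exactness of $\phi$ is hand-wavy: the left adjoint $\varphi_!$ of $\varphi^*$ is a genuine left Kan extension and its exactness is not automatic even rationally; in \cite{CD} this is handled through the comparison of Nisnevich and qfh topologies with $\Q$-coefficients, which is exactly the nontrivial input you flag at the end. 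So your identification of the crux is correct, but the intermediate steps lean on it more heavily than your write-up suggests.
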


\medskip
\renewcommand{\proofname}{\bf Proof of Proposition \ref{flat}}
\begin{proof}
Since the tensor product on $\ShT(k,\Q)$ is right exact (cf. Remark \ref{rmkt}), it suffices to show that the tensor product is left exact.
Thus, we need to show that for any injection $F_1 \to F_2$ in $\ShT(k,\Q)$ and any objects $G,H$ of $\ShT(k,\Q)$, the map
\[
{\rm Hom}_{\ShT}(H,F_1\tensor_{\rm ShT} G)\to {\rm Hom}_{\ShT}(H,F_2\tensor_{\rm ShT} G)
\]
is injective.
Since the functor $\phi$ is fully faithful by Proposition \ref{cdp}\,(5), we have the following commutative diagram
\[
\xymatrix{
{\rm Hom}_{\ShT(k,\Q)}(H,F_1\tensor_{\rm ShT} G)\ar[r]\ar@{=}[d]& \ar@{=}[d]{\rm Hom}_{\ShT(k,\Q)}(H,F_2\tensor_{\rm ShT} G)\\
{\rm Hom}_{\qfh}(\phi(H),\phi(F_1\tensor_{\rm ShT} G))\ar[r]& {\rm Hom}_{\qfh}(\phi(H),\phi(F_2\tensor_{\rm ShT} G))
}
\]
Here if $\phi$ is a tensor functors, then $\phi(F_i\tensor_{\rm ShT} G)=\phi(F_i)\tensor \phi(G)$ for $i=1,2$.
Hence the bottom horizontal map is
\[
{\rm Hom}_{\qfh}(\phi(H),\phi(F_1)\tensor \phi(G))\to {\rm Hom}_{\qfh}(\phi(H),\phi(F_2)\tensor \phi(G)).
\]
But, this map is injective because the functor $\phi$ is exact by Proposition \ref{cdp}\,(5) and the tensor on $\qfh$ is exact.
Now our task is to show that $\phi$ is a tensor functor.

Let $F,G\in \ShT(k,\Q)$ be Nisnevich sheaves with transfers.
In case where $F=L(X)_{\Q}$ and $G=L(Y)_{\Q}$, we have
\begin{align*}
\phi(L(X)_{\Q}\tensor_{\rm ShT}L(Y)_{\Q})
&=\phi(L(X\times Y)_{\Q})\\
&\simeq\Q_{\rm qfh}(X\times Y)\\
&=\Q_{\rm qfh}(X)\tensor\Q_{\rm qfh}(Y)\simeq\phi(L(X)_{\Q})\tensor\phi(L(Y)_{\Q}).
\end{align*}
In general case, let $\ml{L}(F)$ and $\ml{L}(G)$ be the canonical projective resolution of $F$ and $G$ respectively.
Then from the the above special case and the exactness of $\phi$, we obtain that
\begin{align*}
\phi(F\tensor_{\rm ShT}G)
&=\phi(H_0({\rm Tot}(\ml{L}(F)\tensor_{\rm ShT}\ml{L}(G))))\\
&=H_0({\rm Tot}(\phi(\ml{L}(F))\tensor\phi(\ml{L}(G))))\\
&=\phi(F)\tensor\phi(G).
\end{align*}
Thus the functor $\phi$ is a tensor functor.
\end{proof}
\renewcommand{\proofname}{\bf Proof}

\subsection{Example}

We prove that in general, the \'etale sheafification of a presheaf tensor $L(X)\tensor L(Y)$ can not be isomorphic to $L(X\times_k Y)$.

We have a map
\[
\phi:L(X)\tensor L(Y)\to L(X\times_k Y).
\]
and consider the \'etale stalks at a strictly Henselian local scheme $S$ , which is
\[
\phi:\Cor (S,X)\tensor \Cor(S,Y) \to \Cor(S,X\times_k Y), \quad Z\tensor W \mapsto [Z\times_SW].
\]

\begin{prop}\label{uniso}
Let $X=Y=\mathbb{A}^1_k$.
Let $S$ be a strictly localization of $\mathbb{A}^1_k$ at the origin.
Let $t$ be a uniformizer of $S$.
Then $\phi$ is not surjective.
\end{prop}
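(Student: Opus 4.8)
The plan is to exhibit an explicit element of $\Cor(S, \mathbb{A}^1_k \times_k \mathbb{A}^1_k)$ that does not lie in the image of $\phi$. The natural candidate is the class of the diagonal-type closed subscheme cut out by $x - y$ inside $S \times_k \mathbb{A}^1 \times_k \mathbb{A}^1$, or more usefully the graph of a non-constant $S$-point, say the closed subscheme $W \subset S \times_k \mathbb{A}^2$ defined by the equations $x = t$, $y = t$, where $t$ is the chosen uniformizer of $S$. This $W$ is finite and surjective over $S$ (indeed isomorphic to $S$), hence defines a genuine finite correspondence $S \to \mathbb{A}^1 \times_k \mathbb{A}^1$. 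First I would record that any element in the image of $\phi$ is, by $\Q$-linearity, a sum of classes of the form $[Z \times_S V]$ with $Z \in \Cor(S, \mathbb{A}^1)$ and $V \in \Cor(S, \mathbb{A}^1)$ elementary correspondences (integral closed subschemes finite surjective over $S$).

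The key step is a local-algebra computation of what the subschemes $Z \times_S V$ look like. Since $S$ is the strict henselization of $\mathbb{A}^1_k$ at the origin, its local ring is a henselian DVR with uniformizer $t$ and algebraically closed residue field; an elementary finite correspondence $Z \in \Cor(S, \mathbb{A}^1)$ corresponds to a monic polynomial (or a finite extension), and after base change its "coordinate function" $x$ restricted to $Z$ is an element of the normalization that is integral over $\ml{O}_S$. The point is that for a product $Z \times_S V$, the two coordinate functions $x$ and $y$ are "independent": the closed subscheme $Z \times_S V \subset S \times \mathbb{A}^2$ is a product over $S$, so its ideal is generated by one polynomial relation in $x$ alone and one in $y$ alone (together with whatever defines $Z$, resp. $V$). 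By contrast, the cycle $W$ given by $x = y = t$ satisfies the relation $x = y$, which ties the two coordinates together and which no $\Q$-linear combination of product cycles can reproduce — concretely, one compares the images of $x$ and $y$ in the total ring of the cycle, or intersects with the fiber-type subscheme $\{x = y\}$ or $\{x = t\}$, and checks a valuation/multiplicity invariant that distinguishes $W$ from every product.

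A clean way to make "no combination of products reproduces $W$" rigorous is to apply a suitable linear functional. I would push forward along the two projections $p_1, p_2 : \mathbb{A}^2 \to \mathbb{A}^1$ and also along the difference map $d : \mathbb{A}^2 \to \mathbb{A}^1$, $(x,y) \mapsto x - y$, all of which are available on finite correspondences after restricting to a suitable fiber, or alternatively pull back along the point $\mathrm{Spec}(k) \to S$ (the closed point) and then along a generic point. Under $d_*$ (made sense of via intersection with $\{x - y = c\}$ for varying $c$, or via the specialization at the closed point), the class of $W$ is supported at $t = 0$, i.e.\ behaves like the point $0 \in \mathbb{A}^1$, whereas for a product $Z \times_S V$ the image under the analogous operation is a sum of classes of points $\alpha - \beta$ where $\alpha$ ranges over the "roots" of $Z$ and $\beta$ over those of $V$, and by the valuation-theoretic structure of finite correspondences over a henselian DVR these can be arranged so that no cancellation produces exactly the class $W$ maps to. Carrying this out amounts to the bookkeeping in Suslin--Voevodsky's description of $\Cor(S, \mathbb{A}^1)$ for $S$ a henselian trait.

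\textbf{Main obstacle.} The hard part will be the last step: converting the intuitive statement "the coordinates $x$ and $y$ decouple on a product but are coupled by $x = y$ on $W$" into a genuine obstruction, since $\phi$'s image is closed under $\Q$-linear combinations and one must rule out subtle cancellations among many product cycles of possibly large degree. I expect the right tool is to evaluate everything at the closed point of $S$ (or to intersect with the reduced fiber), reducing to finite correspondences from $\mathrm{Spec}(k)$, and then to use a degree/support invariant — for instance, the coefficient of the class of the point $(0,0)$, or the pairing with the function $x - y$ — that is additive, vanishes on every product cycle's contribution in the relevant graded piece, but is nonzero on $[W]$. I would set this up carefully and then the non-surjectivity of $\phi$ follows immediately.
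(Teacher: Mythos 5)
There is a genuine gap, and it starts with the choice of candidate cycle. The graph $W\subset S\times\mathbb{A}^2$ cut out by $x=t,\ y=t$ \emph{is} in the image of $\phi$: it equals $Z'\times_S V'$ with $Z'=\mathrm{Spec}(S[x]/(x-t))$ and $V'=\mathrm{Spec}(S[y]/(y-t))$, i.e.\ it is $\phi(Z'\tensor V')$ on the nose. More generally, the graph of any $S$-point $(a(t),b(t))$ of $\mathbb{A}^2$ is the product of the graphs of its two coordinates, so no degree-one correspondence can witness non-surjectivity; the intuition that ``$x=y$ couples the coordinates'' is simply false for graphs, because over the base $S$ both coordinates are already functions of $t$. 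Your fallback detection devices are also structurally doomed: specializing to the closed point (or intersecting with the reduced fiber) lands you in zero-cycles on $\mathbb{A}^2$ over the algebraically closed residue field, where every cycle is a sum of rational points and the analogous map $\Cor(k,X)\tensor\Cor(k,Y)\to\Cor(k,X\times Y)$ is surjective, so any additive invariant factoring through the special fiber vanishes on the cokernel and cannot detect anything.

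What is actually needed, and what the paper does, is ramification over the trait: take $f(x)=x^2-t$ (or $x^3-t$ in characteristic $2$), set $Z=\mathrm{Spec}(S[x]/(f(x)))$, $W=\mathrm{Spec}(S[y]/(f(y)))$, and observe that $Z\times_S W$ breaks up as $T_1+T_2$ with $T_1=\{f(x)=0,\ x=y\}$ irreducible of degree $2$ over $S$ (irreducible precisely because $t$ is a uniformizer, so $f$ has no root in $S$). The key lemma is a uniqueness statement: the only pair of irreducible correspondences $(\alpha,\beta)$ with $T_1$ (or $T_2$) occurring in $\phi(\alpha\tensor\beta)$ is $(Z,W)$, since the projections of $T_1$ force $f\mid g$ and $f\mid h$ for the monic irreducible polynomials defining $\alpha,\beta$. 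Then if some $\Q$-combination mapped to $T_1$, the component $Z\tensor W$ must occur, its image drags in $T_2$, and no other elementary tensor can cancel $T_2$ — contradiction. So the obstruction is not a linear functional but the fact that the one product containing the ``diagonal'' component necessarily contains its conjugate component as well; without replacing your candidate by such a ramified cycle and proving the uniqueness lemma, the argument does not go through.
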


Write $X=\mathrm{Spec}(k[x])$ and $Y=\mathrm{Spec}(k[y])$.
We define $f(x)\in S[x]$ as
\[
f(x)=\begin{cases}
x^2-t& \text{if $\mathrm{char}(k)\not =2$},\\
x^3-t & \text{if $\mathrm{char}(k) =2$.}
\end{cases}
\] 
and define cycles to be
\[
Z:=\mathrm{Spec} (S[x]/(f(x))) \in \Cor(S,X) \quad \text{and} \quad W:=\mathrm{Spec} (S[y]/(f(y))) \in \Cor(S,Y).
\]
Then we have
\[
T:=Z\times _SW=\mathrm{Spec}(S[x,y]/(f(x),f(y))).
\]
And we put $T_1$ and $T_2$ as follows: in case $\mathrm{char}(k)\not =2$, 
\[
T_1:=\mathrm{Spec}\bigl(S[x,y]/(f(x),x-y)\bigr), \ T_2:=\mathrm{Spec}\bigl(S[x,y]/(f(x),x+y)\bigr);
\]
in case $\mathrm{char}(k)=2$,
\[
T_1:=\mathrm{Spec}\bigl(S[x,y]/(f(x),x-y)\bigr), \ T_2:=\mathrm{Spec}\bigl(S[x,y]/(f(x),x^2+xy+y^2)\bigr).
\] 
Then, one easily see that
\[
\phi(Z\tensor W)=[T]=T_1+T_2\in \Cor(S, X\times _kY) .
\]

\begin{lem}\label{exlem}
Let notation be as in above.
A pair $(\alpha,\beta)$ of irreducible cycles $\alpha\in \Cor(S,X)$ and $\beta \in \Cor(S,Y)$ such that $\phi(\alpha\tensor\beta)$ contains $T_1($or $T_2)$ is only $(Z,W)$.
\end{lem}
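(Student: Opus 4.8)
The plan is to analyze which irreducible finite correspondences $\alpha \in \Cor(S,X)$ and $\beta \in \Cor(S,Y)$ could possibly have $Z\times_S W$-component $T_1$ (the case of $T_2$ is symmetric). Since $S$ is a strictly Henselian DVR with uniformizer $t$, an irreducible cycle in $\Cor(S,X) = \Cor(S,\mathbb{A}^1)$ that is finite surjective over $S$ corresponds to a monic polynomial in $S[x]$ whose reduction mod $t$ is a power of a linear polynomial (by Henselianity of $S$, the irreducible finite $S$-schemes are exactly those with connected special fibre). The key numerical input is the ramification: $f(x) = x^2 - t$ (or $x^3 - t$ in characteristic $2$) is totally ramified over $S$, so $Z$ is the spectrum of a totally ramified extension of degree $2$ (resp. $3$), and likewise for $W$ and $T_1$.

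First I would record that $T_1 \cong \mathrm{Spec}\bigl(S[x]/(f(x))\bigr)$ via $y = x$, so $T_1$ is an irreducible cycle that is finite flat of degree $d$ over $S$ ($d=2$ or $3$), totally ramified, with the same ramification data as $Z$. Next, if $T_1$ appears in $\phi(\alpha\otimes\beta) = [\alpha\times_S\beta]$, then $T_1$ must be (a component of) the fibre product of an irreducible component of $\alpha$ with an irreducible component of $\beta$; replacing $\alpha,\beta$ by those components we may assume $\alpha, \beta$ irreducible, and then $T_1 \hookrightarrow \alpha\times_S\beta$ is a closed subscheme. The projections $T_1 \to \alpha$ and $T_1 \to \beta$ are then finite surjective (both being finite over $S$ and $T_1$ dominating $\alpha$, $\beta$ since these are $S$-finite and irreducible), so $\deg_S \alpha$ and $\deg_S\beta$ both divide $\deg_S T_1 = d$. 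Since $d$ is prime ($2$ or $3$), each of $\alpha$, $\beta$ has degree $1$ or $d$ over $S$.

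The heart of the argument is to rule out degree $1$: if, say, $\alpha$ had degree $1$ over $S$, then $\alpha$ is the graph of a section $S \to X$, i.e. $\alpha = \{x = s\}$ for some $s \in S$, and then $T_1 = \alpha \times_S \beta \cong \beta$ would force $\beta$ to have the ramification type of $f$, but also $x = s$ on $T_1$ would make $x$ lie in the degree-$1$ part, contradicting that on $T_1$ the coordinate $x$ generates a totally ramified degree-$d$ extension of $S$ (the image of $x$ has valuation $1/d \notin \mathbb{Z}$ in the sense that $x$ satisfies an Eisenstein polynomial of degree $d$). Hence $\deg_S\alpha = \deg_S\beta = d$, both projections $T_1 \to \alpha$, $T_1\to\beta$ are degree-$1$, hence isomorphisms, so $\alpha \cong T_1 \cong \beta$ as $S$-schemes and the closed immersion $T_1 \hookrightarrow \alpha\times_S\beta$ identifies $\alpha$ with $Z$ (via $x\mapsto x$) and $\beta$ with $W$ (via $y\mapsto y$). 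Concretely, the coordinate function $x$ on $\alpha$ pulled back to $T_1$ satisfies $f(x)=0$, so $\alpha = \mathrm{Spec}(S[x]/(f(x))) = Z$, and similarly $\beta = W$; one should check that no other $S$-embedding of $T_1$ into $\mathbb{A}^1_S$ is compatible, which amounts to saying the minimal polynomial over $S$ of the relevant generator is exactly $f$.

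The main obstacle I anticipate is the degree-$d$, component-counting bookkeeping in the ramified case: one must be careful that $\alpha\times_S\beta$ could a priori have several components or non-reduced structure, and that $T_1$ sits inside it as a genuine reduced component with multiplicity one; this requires using that $S$ is Henselian (so finite $S$-algebras split as products of local ones) together with the explicit Eisenstein/separability properties of $f(x) = x^2-t$ and $x^3-t$ — note the characteristic-$2$ choice $x^3-t$ is precisely to keep $f$ separable, which is needed so that $Z$, $W$, $T_1$ are reduced and the degree arguments via $\deg_S$ behave well. Once the Henselian structure theory is invoked, the rest is the linear-algebra-over-$S$ computation sketched above, and I would present it by reducing everything to: an irreducible $S$-finite subscheme of $\mathbb{A}^1_S$ of prime degree $d$ totally ramified over $S$ is determined by the Eisenstein polynomial it satisfies, and $f$ is the only such polynomial occurring here.
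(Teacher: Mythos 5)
Your argument is correct, but it takes a noticeably heavier route than the paper's. The paper's proof is a direct projection argument: write $\alpha=\mathrm{Spec}(S[x]/(g(x)))$ and $\beta=\mathrm{Spec}(S[y]/(h(y)))$ with $g,h$ monic irreducible, note that on $T_1$ one has $y=x$ and $f(x)=0$, so the images of $T_1$ under the projections to $S\times X$ and $S\times Y$ are exactly $Z$ and $W$; since $T_1\subset\alpha\times_S\beta$, this gives $Z\subset V(g)$ and $W\subset V(h)$, hence $f\mid g$ and $f\mid h$, and monicity plus irreducibility force $g=f=h$. That one step replaces your detour through $\deg_S$-divisibility, the Henselian DVR structure of $S$, and the Eisenstein/total-ramification argument excluding degree-one components: no case distinction on degrees is needed, since a section graph $\{x=s\}$ is ruled out automatically by $f\mid g$. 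Your final, lightly hedged step (``the minimal polynomial of the relevant generator is exactly $f$'') is in substance the paper's divisibility argument and does go through, because $x|_{T_1}$ has minimal polynomial $f$ and $g(x|_{T_1})=0$. What your approach buys is a more structural explanation (prime degree, totally ramified) of why only $(Z,W)$ can occur, at the cost of justifying facts the paper never needs (multiplicativity of degrees, that a degree-one cycle is a section graph via normality of $S$); also your worry about multiplicities and non-reducedness of $\alpha\times_S\beta$ is moot, as the lemma only concerns which integral subschemes appear as components. One small caveat: the $T_2$ case is literally ``symmetric'' only in characteristic $\neq 2$ (where $y=-x$ and $f(-x)=f(x)$); in characteristic $2$ the relation on $T_2$ is $x^2+xy+y^2=0$, so your degree bookkeeping would have to be redone there (indeed over the strictly Henselian $S$ this locus even splits), whereas the projection argument only needs $y^3=x^3=t$ on $T_2$ and applies unchanged.
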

\begin{proof}
Irreducible cycles $\alpha$ and $\beta$ are given by monic irreducible polynomials $g(x)\in S[x]$ and $h(y) \in S[y]$ respectively, i.e.
\[
\alpha=\mathrm{Spec}(S[x]/(g(x)))\subset S\times X,\quad \beta=\mathrm{Spec}(S[y]/(h(y)))\subset S\times Y.
\]
Put 
\[
\gamma:=\alpha\times _S\beta=\mathrm{Spec}(S[x,y]/(g(x),h(y))) \subset S\times X\times Y.
\]
Suppose that our cycle $T_1\subset \gamma$ is a irreducible component.

Let $p,q$ denote the projections from $S\times X\times Y$ to $S\times X$ and $S\times Y$ respectively.
Then, 
\[
p(T_1)=Z,\quad  q(T_1)=W.
\]
Hence the images of $\gamma$ along $p$ and $q$ contain $Z$ and $W$ respectively, i.e.
\[
Z \subset \mathrm{Spec}(S[x]/(g(x))) \quad \text{and}\quad W\subset \mathrm{Spec}(S[y]/(h(y))).
\]
This inclusions implies that
\[
g(x)=a(x)f(x) \quad \text{and} \quad h(y)=b(y)f(y).
\]
Since $f,g$ and $h$ are monic and irreducible, we have
\[
g(x)=f(x),\quad h(y)=f(y).
\]
Hence $\alpha=Z,\beta=W$.

The above argument works also for $T_2$. 
\end{proof}

\renewcommand{\proofname}{\bf Proof of Proposition \ref{uniso}}
\begin{proof}
Assume that $\phi$ is surjective.
Then we have an element $x:=\sum n_{ij}(\alpha_i\tensor \beta_j) \in \Cor(S,X)\tensor\Cor(S,Y)$ such that $\phi(x)=T_1$.
For some component $\alpha_i\tensor \beta_j$ of $x$, $\phi(\alpha_i\tensor\beta_j)$ contains $T_1$.
By Lemma \ref{exlem}, such $\alpha_i\tensor \beta_j$ is $Z\tensor W$ only.
Therefore we have
\[
x=Z\tensor W+\sum n_{ij}(\alpha_i\tensor \beta_j),
\]
where the sum is taken over all $i,j$ such that $(\alpha_i,\beta_j)\not =(Z,W)$.
Then
\[
T_1=\phi(x)=T_1+T_2+\sum n_{ij}\phi(\alpha_i\tensor \beta_j).
\]
Hence 
\[
T_2=-\sum n_{ij}\phi(\alpha_i\tensor \beta_j).
\]
But this equality cannot be happen by Lemma \ref{exlem}.
\end{proof}


\bigskip


Fakult\"at Mathematik, Universit\"at Duisburg-Essen, Thea-Leymann-Stra\ss e 9, 45127 Essen, Germany\medskip

\textit{E-mail address}: rin.sugiyama@uni-due.de

\end{document}